\def\cm#1{\textcolor{violet}{#1}}
\newtheorem{theorem}{Theorem}
\newtheorem{lemma}{Lemma}
\newtheorem{prop}{Proposition}
\def\cE{{\mathcal E}}
\def\cH{{\mathcal H}}
\def\cL{{\mathcal L}}
\def\cM{{\mathcal M}}
\def\cR{{\mathcal R}}
\def\C{{\mathbb C}}
\def\Q{{\mathbb Q}}
\def\R{{\mathbb R}}
\def\Z{{\mathbb Z}}
\newtheorem{corollary}{Corollary}
\def\\{\cr}
\def\({\left(}
\def\){\right)}
\def\[{\left[}
\def\]{\right]}
\def\<{\langle}
\def\>{\rangle}
\def\le{\leqslant}
\def\ge{\geqslant}
\def\Hb1{\overline{\cH}_{m}}
\def\Ht1{\widetilde{\cH}_{m}}
\title[Weyl sums with multiplicative coefficients]{Weyl sums with multiplicative coefficients and joint equidistribution}
\author{Matteo Bordignon  }
\address{KTH Royal Institute of Technology, Stockholm \newline and \newline Charles University, Faculty of Mathematics and Physics, Department of Algebra, Sokolovská 83, 186 00 Praha 8, Czech Republic Department of Mathematics}
\email{matteobordignon91@gmail.com}
\author{Cynthia Bortolotto  }
\address{Department of Mathematics, ETH Z\"{u}rich, Switzerland}
\email{cynthia.bortolotto@math.ethz.ch}
\author{Bryce Kerr  }
\address{UNSW Canberra, School of Science}
\email{bryce.kerr@unsw.edu.au}
\begin{document}
 
\begin{abstract} 
In this paper we generalize a result of Montgomery and Vaughan regarding exponential sums with multiplicative coefficients to the setting of Weyl sums.  As applications, we establish a joint equidistribution result for roots of polynomial congruences and polynomial values and obtain some new results for mixed character sums. 
 \end{abstract}
\maketitle

\tableofcontents
\section{Introduction}
Let $A\ge 1$ and $f$ a multiplicative function satisfying $|f(p)| \le A$ for any prime $p$ and $\sum_{n\leq N} |f(n)|^2 \le A^2 N$ for all natural numbers $N$. For $\alpha \in \mathbb{R}$ set
\begin{equation*}
S(\alpha):= \sum_{1\le n \le N} f(n)e(\alpha n),
\end{equation*}
where  $e(x)= \exp(2\pi i x)$.

These sums first appear to be considered by Daboussi~\cite{Dab}, who showed that if $|\alpha-a/q| \le 1/q^2$, $(a,q)=1$ and $ 3 \le q \le (N/\log N)^{\frac{1}{2}}$, then
\begin{equation*}
S(\alpha)\ll \frac{N}{(\log \log q)^{\frac{1}{2}}},
\end{equation*}
and implied constant depending only on $A$.

This result was improved  by Montgomery and Vaughan~\cite[Corollary~1]{M-V} who show that assuming $|\alpha-a/q| \le 1/q^2$, $(a,q)=1$ and $2 \le R \le q \le N/R$, we have 
\begin{equation}
\label{eq:Mvbound1}
S(\alpha)\ll \frac{N}{\log N}+ \frac{N(\log R)^{\frac{3}{2}}}{R^{\frac{1}{2}}}.
\end{equation}
We refer the reader to~\cite[Section~7]{M-V} for a demonstration that the term $N/\log{N}$ is sharp. 

The optimal dependence on $R$ in~\eqref{eq:Mvbound1} is an open problem and has been the subject of a number of works, see for example~\cite{Bach,Bre}, and it is expected the estimate~\eqref{eq:Mvbound1} may be improved to
\begin{equation}
\label{eq:Mvbound2}
S(\alpha)\ll \frac{N}{\log N}+ \frac{N}{R^{\frac{1}{2}}}.
\end{equation}

Recently, Bret\'{e}che and Granville~\cite{BreGra} have studied in detail the sums $S(\alpha)$ on minor arcs. Their estimates suggest the following conjecture (see~\cite[Equation~1.4]{BreGra})
\begin{equation*}
S(\alpha)\ll \frac{N}{\log N}+ \frac{N}{q^{\frac{1}{2}}(1+|\beta|x)}, \quad \alpha=\frac{a}{q}+\beta, \quad (a,q)=1.
\end{equation*}
We also note that~\cite{BreGra} contains some nice applications to circle-method type problems.

The estimate~\eqref{eq:Mvbound1} has important applications to Dirichlet L-functions. For example, Montgomery and Vaughan~\cite{M-V} have shown how it may be combined with the Generalized Riemann Hypothesis (GRH) to obtain a sharp upper bound for Dirichlet L-functions at the point $s=1$. One may also combine~\eqref{eq:Mvbound1} with progress around the Burgess bound to obtain unconditional variants of Montgomery and Vaughan's result and we refer the reader to~\cite{GS,Hildebrand,Hild1} for progress in this direction.


Since the work of Montgomery and Vaughan, exponential sums with multiplicative coefficients have appeared in a number of different contexts and a variety of techniques have been developed to facilitate the reduction to exponential sums over bilinear forms. Some examples include Karatsuba's work on short Kloosterman sums~\cite{Kar}, which has been refined by Korolev, see for example~\cite{Kor}. Bourgain, Sarnak and Ziegler~\cite{BSZ} have established a finite version of Vinogradov's bilinear sum inequality.  Gong and Jia~\cite{GJ} have considered shifted character sums with multiplicative coefficients and Korolev and Shparlinski~\cite{KS} dealt with sums over trace functions with multiplicative coefficients.

In this paper, we revisit the approach of Montgomery and Vaughan and generalise into the setting of sums of the form 
\begin{align}
\label{eq:mv1}
\sum_{n\le N}f(n)e(g(n)),
\end{align}
where $g$ is a polynomial with real coefficients and $f$ is a multiplicative function satisfying 
\begin{align*}
f(p)=O(1), \quad \sum_{n\le N}|f(n)|=O(N), \quad \sum_{n\le N}|f(n)|^2=O(N(\log{N})^{A}),
\end{align*}
for any $A \geq 0$.

 Problems of this sort have previously been considered by  Jiang, L\"{u} and Wang \cite{Y}, who showed that one may replace an assumption on the $\ell_2$ norm
$$\sum_{n\le N}|f(n)|^2\ll N,$$
with an assumption of the form 
$$\sum_{\substack{p\le N \\ p \ \text{prime}}}|f(p)||f(p+h)|\ll \frac{h}{\phi(h)}\frac{N}{(\log{N})^2}.$$
Such a relaxation is significant in the context of $\text{GL}_m$ $L$-functions in the absence of progress towards the Ramanujan conjectures.

Matthiesen~\cite{Math} has considered sums of the form~\eqref{eq:mv1} over polynomial nilsequences with slightly weaker conditions on $f$ than Montgomery and Vaughan. These results were later applied to linear correlations of multiplicative functions~\cite{Math1}. We also mention the recent work~\cite{MMMM} which considers exponential sums with multiplicative functions over nilsequences on average over short intervals.

\section{Main results}

Our first result is as follows.
\begin{theorem}
\label{theo:main}
Let $A>0$ and $c>0$ be real numbers and $f$ be a multiplicative function satisfying 
\begin{align}
\label{eq:fprime}
|f(p)| \le C \quad \text{for each prime $p$},
\end{align}
\begin{align}
\label{eq:fell1}
\sum_{n\le N}|f(n)|=O(N),
\end{align}
\begin{align}
\label{eq:fell2}
\sum_{n\le N}|f(n)|^2=O(N(\log{N})^{A}).
\end{align}
Let $F$  be a polynomial of degree $d\ge 1$ with real coefficients given by 
$$F(x)=\alpha_d x^d+\dots+\alpha_1 x.$$
Let $R\ge 1$ and suppose there exist integers $l, a, q$ with $1\le q\le R$ and $1\le \ell \le d$, $(a,q)=1$ and 
\begin{align*}
\left|\alpha_{\ell}-\frac{a}{q}\right|\le \frac{1}{Rq},
\end{align*} 
Denote $C = \frac{A}{2r}$. Then for any $r >d(d+1)$, we have

\begin{align*}
\sum_{1\le n \le N}f(n)e(F(n))&\ll N\left(\frac{1}{(\log{N})^{1-C }}+(\log{N})^{C}\left(\frac{q}{N^{\ell}}+\frac{1}{q}\right)^{1/4r^2}\right) \\ 
& \quad \quad \quad \quad \quad \quad \quad +(NR^{1/\ell})^{1/2},
\end{align*}
where the implied constant depends on $A$ and $r$.
In particular, if we suppose that 
\begin{equation*}
 (\log{N})^{4r^2  } \le q \le \frac{N^{\ell}}{ (\log{N})^{4r^2 } },
 \end{equation*}
 then
\begin{align}
\sum_{1\le n \le N}f(n)e(F(n))\ll \frac{N}{(\log{N})^{1-C}}.  
\end{align}

\end{theorem}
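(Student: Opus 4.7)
The plan is to adapt the Montgomery--Vaughan strategy, replacing the linear phase $e(\alpha n)$ by $e(F(n))$ and the prime exponential sum estimate by a Weyl--Vinogradov bound of appropriate strength. I would start from the identity $f(n)\log n = \sum_{p^a \| n} a f(n) \log p$ combined with multiplicativity $f(pm)=f(p)f(m)$ when $(p,m)=1$. Multiplying the Weyl sum by $\log N$, writing $\log N = \log n + (\log N - \log n)$, and separating the contribution from prime powers $p^a$ with $a\ge 2$, one reaches
\[
(\log N)\sum_{n\le N} f(n) e(F(n)) = \sum_{p\le N} f(p)\log p \sum_{m\le N/p} f(m) e(F(pm)) + \mathcal{E}_1,
\]
where $\mathcal{E}_1$ collects the prime power contribution (bounded using \eqref{eq:fprime} and \eqref{eq:fell1}) together with the discrepancy from the replacement $\log n \mapsto \log N$ (handled by Abel summation and \eqref{eq:fell1}); both are absorbed into the target estimate.

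Next I would perform a dyadic decomposition $P < p \le 2P$ of the prime variable, producing $O(\log N)$ sub-sums $T_P$. For each $T_P$ I apply Cauchy--Schwarz in the inner variable $m$, using \eqref{eq:fell2} to control the $\ell^2$-mass, expand the square, and swap the order of summation to obtain the bilinear form
\[
\sum_{p_1,p_2} f(p_1)\overline{f(p_2)}\log p_1 \log p_2 \sum_{m\le N/\max(p_1,p_2)} e\bigl(H_{p_1,p_2}(m)\bigr), \quad H_{p_1,p_2}(m)=\sum_{j=1}^{d}\alpha_j(p_1^j-p_2^j)m^j.
\]
The diagonal $p_1=p_2$ produces an admissible term of size $\ll N(\log N)^{O(1)}$. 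For the off-diagonal pairs the inner $m$-sum is a genuine Weyl sum of degree $d$, to which I would apply the following Vinogradov-type estimate: for $r > d(d+1)$ and any rational approximation $|\beta_\ell - a'/q'|\le 1/q'^2$ with $(a',q')=1$ to the coefficient of $m^\ell$ in $H_{p_1,p_2}$,
\[
\Bigl|\sum_{m \le M} e(H(m))\Bigr| \ll M(\log M)^{O(1)} \bigl(1/q' + q'/M^\ell\bigr)^{1/(4r^2)},
\]
which in this form follows from Vinogradov's mean value theorem in its resolved version (Wooley, Bourgain--Demeter--Guth) and accounts for the exponent $1/(4r^2)$ in the theorem.

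The key technical step is to transfer the rational approximation $|\alpha_\ell-a/q|\le 1/(Rq)$ through the multiplication by $p_1^\ell-p_2^\ell$. Writing $p_1^\ell - p_2^\ell = g\cdot t$ with $g = \gcd(p_1^\ell-p_2^\ell, q)$ and $(t, q/g)=1$, one obtains an approximation to $\alpha_\ell(p_1^\ell-p_2^\ell)$ with reduced denominator $q/g$, which remains in the range useful for the Vinogradov estimate exactly when $|p_1^\ell-p_2^\ell|\ll R$, equivalently $P \ll R^{1/\ell}$. For dyadic ranges with $P\ge R^{1/\ell}$ I would fall back on the trivial bound on the inner sum, which after tracking through the Cauchy--Schwarz contributes the additive error $(NR^{1/\ell})^{1/2}$. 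Summing the Vinogradov contribution over the $P^2$ pairs and the $O(\log N)$ dyadic ranges, and balancing the $(\log N)^A$ loss from \eqref{eq:fell2} against the Vinogradov saving, produces the main bound with $C = A/(2r)$; the ``in particular'' conclusion is then an immediate numerical check since the assumed range of $q$ forces $(q/N^\ell + 1/q)^{1/(4r^2)} \le 1/\log N$. The principal obstacle I anticipate is precisely the bookkeeping around this approximation transfer: uniformly controlling $\gcd(p_1^\ell-p_2^\ell, q)$ across the dyadic ranges of $p_1,p_2$ and ensuring the effective denominator lies in the regime where the Vinogradov bound saves is exactly what imposes the split at $P=R^{1/\ell}$ and forces the structure of the final error term.
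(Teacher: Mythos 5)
Your strategy diverges from the paper's at several points, and as written it does not reproduce the claimed exponent $C = A/(2r)$.

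\textbf{Loss from the $\ell^2$ hypothesis.} The single most important issue: you apply Cauchy--Schwarz in $m$, which pulls out the full $\ell^2$-mass $\sum_m|f(m)|^2 \ll (N/P)(\log N)^A$. After taking the square root this contributes a factor $(\log N)^{A/2}$, not $(\log N)^{A/(2r)}$. The paper cannot afford this. Instead (see Lemma~\ref{lem:bilinear1}) it applies H\"older with exponent $2r/(2r-1)$ on the $n$-side, so that only a $2r/(2r-1)$-norm, interpolated between $\ell^1$ and $\ell^2$, appears: $\left(\sum|\alpha(n)|^{2r/(2r-1)}\right)^{2r-1}\ll M^{2r-1}(\log M)^{A}$, whence a loss of only $(\log M)^{A/(2r)}$ on $I$. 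Matching $C=A/(2r)$ requires exactly this, so a plain Cauchy--Schwarz cannot deliver the bound as stated.

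\textbf{Single vs. double Vinogradov savings.} After your Cauchy--Schwarz the primes $p_1,p_2$ are frozen and only the $m$-sum is a Weyl sum. The entire point of the Montgomery--Vaughan rectangle partition (Lemma~\ref{lem:partition}) together with the two nested H\"older steps in Lemma~\ref{lem:bilinear1} is to apply the Vinogradov mean value theorem to \emph{both} the prime variable (the counting function $J_k(\lambda)$, via Lemma~\ref{lem:vmvtP1}) and the $n$-variable ($L_k(\mu)$), and it is the composition of the two that produces the $1/(4r^2)$ exponent in $(q/N^\ell+1/q)^{1/4r^2}$. In your scheme the savings come from one Weyl sum of length $\asymp N/P$ only; after Cauchy--Schwarz an exponent $1/(4r^2)$ on the inner sum becomes $1/(8r^2)$ on $T_P$, so the numerology doesn't close.

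\textbf{The approximation transfer.} You multiply the $\ell$-th Fourier coefficient by $p_1^\ell-p_2^\ell$ and then need to control $\gcd(p_1^\ell-p_2^\ell,q)$ on average over $p_1, p_2$. Besides being a real technical burden, this is exactly what the paper's argument avoids. In Lemma~\ref{lem:bilinear1} the $\ell$-th coefficient is isolated with a Fej\'er weight $S(c_0\lambda_\ell/(XQ^{\ell-1}))$ and, after Poisson summation, one is reduced to counting $\mu\ll YM^{\ell-1}$ with $\|\alpha_\ell\mu\|$ small, where the rational approximation is applied directly to $\alpha_\ell$. No gcd with $q$ ever arises.

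\textbf{Source of the term $(NR^{1/\ell})^{1/2}$.} You attribute this term to a trivial bound for dyadic ranges with $P\ge R^{1/\ell}$. In the paper it has a different origin: it is the contribution of the exceptional set $\cE_3$ in Lemma~\ref{lem:partition} covering the apex of the hyperbola where both $p$ and $n$ are near $\sqrt{N}$. The parameter $s$ controlling the rectangle sizes is chosen as $s=q^{1/\ell}/(\log N)^4$ precisely to balance this $(Ns)^{1/2}$ error against the genuine Vinogradov saving. A straight ``fall back to the trivial bound when $P>R^{1/\ell}$'' does not produce a term of size $(NR^{1/\ell})^{1/2}$: tracking through the Cauchy--Schwarz that regime contributes $\gg N$ unless one more carefully uses the shortness of the $m$-range, which is in fact what the $\cE_2$ analysis in Lemma~\ref{lem:partition} does.

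\textbf{What your approach does not need.} One genuine simplification your plan offers: because the primes are fixed after Cauchy--Schwarz, you never need a mean value theorem over primes in a translated interval. The paper, by contrast, must establish Lemma~\ref{lem:vmvtP1} (an adaptation of Hua's Theorem~16 using the Bourgain--Demeter--Guth bound and a short-interval exponential sum estimate over primes, Lemma~\ref{lem:Plarge}) precisely to make the double application of the VMVT go through on the prime variable. That lemma is the main new ingredient of Section~\ref{s33}, and any proposal that circumvents it will almost certainly pay for it in the exponents, as yours does.
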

To demonstrate the precision of the above estimate, in Section \ref{sec:sharp} we prove that for any polynomial $F$ and for all $N$,  there exists $f=f_{F,  N}$ such that
\begin{align*}
\left| \sum_{1\le n \le N}f(n)e(F(n)) \right| \ge \frac{1}{10}\frac{N}{\log{N}}.
\end{align*}

The proof of Theorem~\ref{theo:main} follows the outline of Montgomery and \newline Vaughan~\cite{M-V} and starts with a combinatorial decomposition of multiplicative functions based on M\"{o}bius inversion and reduces the problem to estimating bilinear forms over polynomials with summation restricted to points under the hyperbola. We then use Montgomery and Vaughan's  partition of the parabola into disjoint rectangles to which techniques related to the Vinogradov Mean Value theorem may be applied. It will be fundamental to develop a version of Vinogradov Mean Value theorem for primes in large translated intervals, this is Lemma \ref{lem:vmvtP1}.  We should note that we introduced the general condition \eqref{eq:fell2} with the aiming of using Theorem \ref{theo:main} in the proof of Theorem \ref{theorem2}.
\section{Applications}
\label{sec:applications}
\subsection{Joint equidistribution}
As an application of Theorem \ref{theo:main},  we prove a joint distribution result. Throughout this section, we let $p \in \Z[x]$ be irreducible over $\Q$ of degree $e \ge 2$ and we consider the ratios $v/n$, where $v$ are the roots of the polynomial $p$ modulo $n$
\begin{align*}
p(v) \equiv 0 \bmod{n}.
\end{align*}
Consider the sequence $(g_k)_{k\ge 1}$ of these ratios so that the corresponding denominators are in ascending order.
Hooley proved, in \cite[Theorem 2]{Hooley},  that this sequence is equidistributed in $\R /\Z$.  

We now let $F(x) = \alpha_1 x + \ldots +\alpha_d x^d \in \R[x]$ with $d \ge 1$ and with an irrational coefficient and define
\begin{align*}
A(F,p)_k = ( g_k, F(k'))_{k\ge 1},
\end{align*}
where $g_k$ is as above and for  $g_k = v/n$, $p(v) \equiv 0 \bmod{n}$, we take $k'=n$. We prove the following

\begin{theorem}
\label{theorem2}
The sequence $(A(F,p)_k)_{k\ge 1}$ is equidistributed in $(\R/\Z)^2$.
\end{theorem}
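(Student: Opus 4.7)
The plan is to apply Weyl's equidistribution criterion in $(\R/\Z)^2$, which reduces the claim to showing that for every $(h_1,h_2)\in\Z^2\setminus\{(0,0)\}$,
\[
\sum_{k\leq K} e\bigl(h_1 g_k + h_2 F(k')\bigr)=o(K).
\]
Reindexing the sequence by $n=k'$, where each $n$ is counted with multiplicity $r(n):=\#\{v\bmod n:p(v)\equiv 0\}$, and using $K=\sum_{n\leq N}r(n)\asymp N$, the task becomes
\[
T(N;h_1,h_2):=\sum_{n\leq N}e(h_2 F(n))\,S_{h_1}(n)=o(N),\qquad S_{h_1}(n):=\sum_{\substack{v\bmod n\\ p(v)\equiv 0}} e(h_1 v/n),
\]
for every such pair. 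I would then split into three cases. If $h_2=0$, the claim is Hooley's classical equidistribution theorem for $v/n$. If $h_1=0$, then $S_0=r$ is multiplicative with $r(p)\leq e=\deg p$, $\sum_{n\leq N}r(n)\ll N$, and $\sum_{n\leq N}r(n)^2\ll N(\log N)^{e^2-1}$, so the hypotheses \eqref{eq:fprime}--\eqref{eq:fell2} of Theorem~\ref{theo:main} hold. Irrationality of some coefficient $\alpha_\ell$ of $F$ allows me to pick, for each large $N$, Dirichlet data $(a,q,R)$ placing $q$ inside the admissible window $[(\log N)^{4r^2},N^\ell/(\log N)^{4r^2}]$, and Theorem~\ref{theo:main} then yields $T(N;0,h_2)\ll N/(\log N)^{1-C}=o(N)$.

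The main case, $h_1,h_2\neq 0$, is where the relaxed hypothesis~\eqref{eq:fell2} becomes essential. My plan is to use the Chinese Remainder Theorem to decompose
\[
S_{h_1}(n)=\prod_{p^{a_p}\|n} T_{h_1,\overline{n/p^{a_p}}}(p^{a_p}),
\]
where each local factor depends on $n$ through the inverse $\overline{n/p^{a_p}}\bmod p^{a_p}$. Expanding the local factors as linear combinations of Dirichlet characters modulo $p^{a_p}$ and regrouping rewrites $T(N;h_1,h_2)$ as a weighted combination of Weyl sums of the form $\sum_{n\leq N} f_\chi(n) e(h_2 F(n))$, with each $f_\chi$ a genuinely multiplicative function dominated pointwise by $r(n)$, hence satisfying \eqref{eq:fprime}--\eqref{eq:fell2} uniformly in $\chi$. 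Theorem~\ref{theo:main} is then applicable to each piece with the same Dirichlet data as in the $h_1=0$ case; summing yields $T(N;h_1,h_2)=o(N)$.

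The main obstacle is the character decomposition of $S_{h_1}$ in the third case: producing genuinely multiplicative $f_\chi$ from the twisted-multiplicative $S_{h_1}$ in a way amenable to Theorem~\ref{theo:main}, and controlling~\eqref{eq:fell2} uniformly in $\chi$ with polylogarithmic losses that the factor $(\log N)^{-1+C}$ of Theorem~\ref{theo:main} can absorb. This is precisely why the $\ell^2$-hypothesis was weakened beyond the boundedness assumption of Montgomery--Vaughan, as the authors remark at the close of Section~2. A secondary technicality, common to Cases $h_1=0$ and the main case, is that irrationality of $\alpha_\ell$ only guarantees qualitatively that the Dirichlet denominator $q=q(R)$ tends to infinity with $R$, so $R=R(N)$ must be chosen carefully — for instance along the convergents of the continued-fraction expansion of $\alpha_\ell$ — to land $q$ inside the admissible window for every sufficiently large $N$.
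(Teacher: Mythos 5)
Your Weyl reduction and the three-way case split are reasonable, but both of your non-trivial cases are mis-calibrated, and in opposite ways.

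For $h_1,h_2\neq 0$ (your ``main case''): you have overlooked that Hooley's theorem, as recalled in~\eqref{ap1}, holds with the absolute value \emph{inside} the sum over moduli,
\[
\frac{1}{N}\sum_{n\le N}\Bigl|\sum_{\substack{v\bmod n\\ p(v)\equiv 0 \bmod n}}e(hv/n)\Bigr|\longrightarrow 0\qquad\text{for every fixed }h\neq 0.
\]
For any $h_1\neq 0$ the factor $e(h_2 F(n))$ is therefore absorbed trivially, and the joint case collapses to the $h_2=0$ case with no new work at all. Your proposed CRT and character expansion of the twisted-multiplicative weight $S_{h_1}(n)$ --- which you yourself flag as the main technical obstacle --- is simply unnecessary, and the remark about the relaxed $\ell^2$ hypothesis is not what the weakening is for (it is needed because $\varrho$ itself is unbounded, not because of any character twist of $S_{h_1}$).

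For $h_1=0$, $h_2\neq 0$, i.e.\ the bound $\sum_{n\le N}\varrho(n)e(h_2 F(n))=o(N)$: this is where the real difficulty lives, and your treatment has a genuine gap. The claim that irrationality of $\alpha_\ell$ lets you put the Dirichlet denominator $q$ into the admissible window $[(\log N)^{4r^2},\,N^\ell/(\log N)^{4r^2}]$ of Theorem~\ref{theo:main} is false. Dirichlet's theorem produces, for each scale $R$, some $q\le R$ with $|\alpha_\ell-a/q|\le 1/(qR)$, but gives no lower bound on $q$; if $\alpha_\ell$ admits exceptionally good small-denominator approximations (Liouville-type behaviour), the approximating denominator is bounded over long stretches of $N$, and choosing $R$ along convergents only selects \emph{which} convergent denominator $q_k$ you land on --- it cannot force $q_k\ge(\log N)^{4r^2}$ when the continued-fraction denominators jump from below the window to above it. Irrationality gives only $q\to\infty$ eventually, which is too slow. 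The paper's Proposition~\ref{prop:main} handles precisely this small-$q$ regime and constitutes the bulk of the proof: it expresses $\sum_n \varrho(n)\chi(n)n^{-s}$ in terms of Artin $L$-functions of the splitting field of $p$, reduces these to Hecke $L$-functions via Brauer induction, controls the pole at $s=1$ and a possible exceptional real zero via the standard zero-free region and Siegel's theorem, and finishes with the Cochrane--Zheng bound on complete exponential sums. None of this is anticipated in your outline; without it, the argument is incomplete exactly where the problem is hardest.
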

This indicates that the sequence $(F(n))_{n\in \ge 1}$ is somehow not correlated with the sequence $ (g_n)_{n \ge 1}.$ 
\subsection{Mixed character sums}
We next explain how Theorem~\ref{theo:main} is related to sums considered by Enflo~\cite{Enflo}, Chang~\cite{Chang} and Heath-Brown and Pierce~\cite{H-B}.
Theorem~\ref{theo:main} implies large exponential sums must correspond to pretentious multiplicative functions.
\begin{corollary}
\label{cor:large}
With notation and conditions as in Theorem~\ref{theo:main}, let $r\ge d(d+1)$ and suppose that 
\begin{align*}
\left| \sum_{1\le n \le N}f(n)e(F(n)) \right| \gg \frac{N}{( \log{N})^{1-A/2r}}.
\end{align*}
There exists an integer 
$$k\le (\log{N})^{d(4r^2+4rA)},$$
and a multiplicative character $\psi \mod{k}$ such that 
\begin{align*}
\sum_{1\le n \le N}f(n)e(F(n))\ll (\log{N})^{4r^2+4rA}\max_{u\le N} \left| \sum_{n\le u}\psi(n)f(n)\right|.
\end{align*}
\end{corollary}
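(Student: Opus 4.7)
The plan is to combine Theorem~\ref{theo:main} with a Dirichlet-character decomposition. For each $\ell\in\{1,\dots,d\}$ I apply Dirichlet's approximation theorem with parameter $R_\ell=N^\ell/(\log N)^{4r^2}$ to obtain coprime integers $a_\ell,q_\ell$ with $q_\ell\le R_\ell$ and $|\alpha_\ell-a_\ell/q_\ell|\le 1/(R_\ell q_\ell)$. If some $q_\ell$ lies in the medium range $(\log N)^{4r^2}\le q_\ell\le N^\ell/(\log N)^{4r^2}$, Theorem~\ref{theo:main} directly yields $|\sum f(n)e(F(n))|\ll N/(\log N)^{1-C}$ with $C=A/(2r)$, and the corollary reduces to a comparison with a simple character sum. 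Otherwise $q_\ell\le(\log N)^{4r^2}$ for every $\ell$, which is the main case.

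In the main case, set $Q=\mathrm{lcm}(q_1,\dots,q_d)\le(\log N)^{4dr^2}$ and split $F=G+H$, where $G(x)=\sum_\ell(a_\ell/q_\ell)x^\ell$ has common denominator $Q$ and $H(x)=\sum_\ell\beta_\ell x^\ell$ has small coefficients $|\beta_\ell N^\ell|\ll(\log N)^{4r^2}$. Abel summation applied to $e(F(n))=e(G(n))e(H(n))$, together with the bound $\int_1^N|H'(u)|\,du\ll(\log N)^{4r^2}$, gives
$$\bigl|\sum_{n\le N}f(n)e(F(n))\bigr|\ll(\log N)^{4r^2}\max_{u\le N}\bigl|\sum_{n\le u}f(n)e(G(n))\bigr|.$$
To control the inner sum, I factor $n=de$ with $d\mid Q^\infty$ and $(e,Q)=1$, so that $f(n)=f(d)f(e)$ by multiplicativity. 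For fixed $d$, $e(G(de))$ is periodic in $e$ modulo $Q/\gcd(d,Q)$ and admits the expansion
$$e(G(de))=\sum_{\psi\bmod Q}A_\psi(d)\tilde\psi(e),\qquad \sum_\psi|A_\psi(d)|^2\le 1,$$
over Dirichlet characters $\tilde\psi$ mod $Q$ induced from $Q/\gcd(d,Q)$ (Parseval). Substituting this into the inner sum, applying Cauchy--Schwarz in $\psi$, and bounding $\sum_{d\mid Q^\infty,\,d\le N}|f(d)|$ by a Rankin-type generating-function estimate that exploits the multiplicativity of $|f|$, the pointwise bound $|f(p)|\le C$, and the $\ell^2$ hypothesis $\sum_{n\le N}|f(n)|^2\ll N(\log N)^A$, produces an overall loss of $(\log N)^{4rA}$ beyond the partial-summation factor. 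Taking $k=Q\le(\log N)^{d(4r^2+4rA)}$ and selecting the optimal $\psi$ then gives the claim.

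The principal technical obstacle is the Rankin-type estimate for $\sum_{d\mid Q^\infty,\,d\le N}|f(d)|$: a naive Cauchy--Schwarz against the count of $Q$-smooth integers costs a factor $(\log N)^{\omega(Q)}=(\log N)^{O(\log\log N)}$, which is super-polylogarithmic and therefore unacceptable. Achieving the required $(\log N)^{O(rA)}$ bound demands a more careful control of $|f(p^k)|$ on prime powers dividing $Q$ through the interplay of multiplicativity and the $\ell^2$ mean, while simultaneously absorbing the $\sqrt{Q}$ factor coming from Parseval in the character expansion.
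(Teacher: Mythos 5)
Your reduction via Dirichlet's theorem and Abel summation to a sum $\sum_{n\le u}f(n)e(G(n))$ with a rational polynomial $G$ of common denominator $Q$ mirrors the paper's opening moves, but the character-expansion stage you choose is genuinely different and has a real gap, which you partly flag. Writing $n=de$ with $d\mid Q^\infty$ and $(e,Q)=1$ forces you to sum over \emph{all} $Q$-smooth $d\le N$, and the factor $\sum_{d\mid Q^\infty,\,d\le N}|f(d)|$ cannot be controlled from the hypotheses on $f$ alone: taking $f$ multiplicative with $f(p^k)=1$ for $p\mid Q$ and $f=0$ on primes outside $Q$ satisfies \eqref{eq:fprime}--\eqref{eq:fell2}, yet makes this sum equal to the $Q$-smooth count below $N$, which is super-polylogarithmic because $Q$ may have on the order of $\log\log N/\log\log\log N$ distinct prime factors. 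On top of this, Cauchy--Schwarz over $\psi$ followed by Parseval already injects a factor $\sqrt{\phi(Q)}$, which is of size roughly $(\log N)^{2dr^2}$ and therefore exceeds the stated loss $(\log N)^{4r^2+4rA}$ once $d\ge 3$, so the exponent bookkeeping also does not close.

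The paper avoids the smooth-number issue by a different split: it sorts $n$ by its residue $a\bmod k$ where $k=\operatorname{lcm}(s_1,\dots,s_d)$, writes $T(u)=\sum_{a\le k}e(F_1(a))S(a)$ with $S(a)=\sum_{n\le u,\, n\equiv a\,(k)}f(n)$, and for each $a$ pulls out only $d=(a,k)$ rather than the full $k$-smooth part of $n$, applying orthogonality over characters mod $k/d$. Because $d$ ranges only over the at most $\tau(k)=(\log N)^{o(1)}$ divisors of $k$, no Rankin- or smooth-count estimate is required, and the $1/\phi(k/d)$ from orthogonality is available to offset the count of characters. If you replace the factorization $n=de$ with this residue-class sorting, the unresolved obstacle disappears. (A minor caveat even on the paper's side: extracting $\psi(d)f(d)$ from $\sum_{n\le u,\,d\mid n}\psi(n)f(n)$ needs care when $(d,n/d)>1$, which is glossed over there; since $d\mid k$ with $k$ small the fix is inexpensive.)
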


Corollary~\ref{cor:large} implies one may bound character sums mixed by polynomials by reducing to pure character sums, we refer the reader to Section~\ref{app} for more precise results.
\begin{corollary}
\label{cor:Hild1}
Let $F(n)$ a polynomial of degree $d$ with real coefficients and $\chi$ a primitive characters modulo $q$. Suppose that $\delta,\varepsilon$ satisfy
$$\max_{\substack{k\ll (\log{q})^{100d^3} \\ \psi \mod{k} \\ u \le N}}\left|\sum_{n\le u}\psi(n)\chi(n)\right|\le Nq^{-\varepsilon} \quad \text{provided} \quad N\ge q^{\delta}.$$
 Then we have 
\begin{equation}
\label{eq:eqn-11sums}
\sum_{n \le N} \chi(n) e(F(n))\ll\frac{N}{(\log N)^{1-1/d(d+1)}}.
\end{equation}
\end{corollary}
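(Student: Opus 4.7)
The plan is to apply Corollary~\ref{cor:large} directly to the multiplicative function $f=\chi$, then invoke the hypothesised character-sum bound. Since $\chi$ is a Dirichlet character modulo $q$, we have $|\chi(n)|\le 1$, so the hypotheses \eqref{eq:fprime}--\eqref{eq:fell2} of Theorem~\ref{theo:main} hold with $C=1$ and any $A\ge 0$. To produce the exponent $1-1/d(d+1)$ appearing in \eqref{eq:eqn-11sums}, the natural choice is $r=d(d+1)$ and $A=2$, which gives $A/(2r)=1/(d(d+1))$ and matches the loss recorded in the statement.

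With these parameters, Corollary~\ref{cor:large} offers a dichotomy: either
\begin{align*}
\Bigl|\sum_{n\le N}\chi(n)e(F(n))\Bigr| \ll \frac{N}{(\log N)^{1-1/d(d+1)}},
\end{align*}
which is the target bound, or else there exist an integer $k\le(\log N)^{d(4r^2+4rA)}$ and a character $\psi\bmod k$ with
\begin{align*}
\Bigl|\sum_{n\le N}\chi(n)e(F(n))\Bigr| \ll (\log N)^{4r^2+4rA}\,\max_{u\le N}\Bigl|\sum_{n\le u}\psi(n)\chi(n)\Bigr|.
\end{align*}
In this second case, when $N\ge q^{\delta}$ the inner maximum is bounded by $Nq^{-\varepsilon}$ by hypothesis, which yields
\begin{align*}
\Bigl|\sum_{n\le N}\chi(n)e(F(n))\Bigr| \ll (\log N)^{4r^2+4rA}\,N\,q^{-\varepsilon}.
\end{align*}
Because $N\ge q^{\delta}$ forces $q\to\infty$ with $N$, the factor $q^{\varepsilon}$ dominates any fixed power of $\log N$, and the right-hand side is much smaller than $N/(\log N)^{1-1/d(d+1)}$; the complementary range $N<q^{\delta}$ is dispatched by trivial estimates.

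The main technical point — and the reason for the concrete constant $100$ in the hypothesis — is verifying that the conductor bound $(\log N)^{d(4r^2+4rA)}$ produced by Corollary~\ref{cor:large} fits within the threshold $(\log q)^{100 d^3}$ allowed by the assumed character-sum estimate. Here one uses the regime $N\ge q^{\delta}$ to translate between $\log N$ and $\log q$, and the explicit quantity $d(4r^2+4rA)=4d^3(d+1)^2+8d^2(d+1)$ (with the choice above) determines the precise constant. Apart from this bookkeeping, the argument is entirely a substitution: the heavy lifting is already contained in Corollary~\ref{cor:large}, which encapsulates the Montgomery--Vaughan-type reduction of $\sum\chi(n)e(F(n))$ to pure character sums of small conductor.
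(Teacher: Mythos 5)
Your overall route --- apply Corollary~\ref{cor:large} to $f=\chi$ and then insert the hypothesised pure-character bound --- is the one the paper intends: the paper states Corollary~\ref{cor:Hild1} without an explicit proof, and its GRH analogue Corollary~\ref{cor:Hild} is proved exactly this way. However, two of your steps do not go through. First, the claim that ``$N\ge q^{\delta}$ forces $q\to\infty$ with $N$'' is simply false: the inequality $N\ge q^{\delta}$ gives the \emph{upper} bound $q\le N^{1/\delta}$ but allows $q$ to stay bounded as $N\to\infty$. What you actually need in order to absorb the factor $(\log N)^{4r^2+4rA}$ is the \emph{lower} bound $q^{\varepsilon}\gg(\log N)^{4r^2+4rA+1}$, and nothing in the hypotheses supplies that. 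The remark that the range $N<q^{\delta}$ is handled by ``trivial estimates'' has the same defect: the trivial estimate is $N$, not $N/(\log N)^{1-1/d(d+1)}$. Some further restriction on the relative sizes of $N$ and $q$ (or a large-$q$ assumption) is required, and you do not discuss it.

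Second, you identify the conductor verification as ``the main technical point'' but never carry it out, and with the parameters you chose it fails. With $r=d(d+1)$ and $A=2$ one gets $d(4r^2+4rA)=4d^{3}(d+1)^{2}+8d^{2}(d+1)$, which already exceeds $100d^{3}$ at $d=4$ ($7040>6400$) and grows like $4d^{5}$; so the modulus $k\le(\log N)^{d(4r^2+4rA)}$ produced by Corollary~\ref{cor:large} need not fall within the range $(\log q)^{100d^{3}}$ covered by the hypothesis even in the most favourable regime $\log N\asymp\log q$. This is consistent with the exponent $d^{5}$ that appears in the paper's own proof of Corollary~\ref{cor:Hild}, and it suggests that the exponent $100d^{3}$ in the statement does not match what Corollary~\ref{cor:large} actually delivers; but in any case your write-up asserts rather than establishes the needed inequality. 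Note also that Theorem~\ref{theo:main} requires $r>d(d+1)$ strictly, so $r=d(d+1)$ sits outside its hypotheses (Corollary~\ref{cor:large} writes $r\ge d(d+1)$, an inconsistency in the paper, but your choice is on that boundary).
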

In particular, the estimate~\eqref{eq:eqn-11sums} holds under the following conditions:
\begin{itemize} 
\item For arbitrarily small $\delta$ assuming the Generalised Riemann hypothesis
\item For $\delta=1/3$ and arbitrary integer $q$, which follows from the Burgess bound, see for example~\cite{I-K}.
\end{itemize}
Let $\varepsilon>0$ be small.  Enflo~\cite{Enflo} has previously established that 
$$\sum_{n \le N} \chi(n) e(F(n))\ll N^{1-\delta}, \quad \text{provided} \quad N\ge q^{1/4+\varepsilon},$$
and we refer the reader to~\cite{Chang,H-B} for quantitative improvements on Enflo's result.
 Chang~\cite{Chang1} has shown
$$\sum_{n \le N} \chi(n) e(F(n))\ll N^{1-\delta}, \quad \text{provided} \quad N\ge q^{\varepsilon},$$
provided $q$ is suitably smooth/powerful. 

Corollary~\ref{cor:Hild1} provides some new instances where one may bound mixed character sums nontrivially. We refer the reader to Section~\ref{app} for more details.
\subsection{Acknowledgements}
We would like to thank Emmanuel Kowalski for his helpful comments and suggestions. We would also like to thank Alberto Perelli for a useful conversation and his helpful comments.
\subsection{Funding}
Part of the research done by Matteo Bordignon was supported by OP RDE project No.
CZ.$02.2.69/0.0/0.0/18\_053/0016976$ International mobility of research,
technical and administrative staff at the Charles University, part by an Australian Mathematical Society Lift-off Fellowship and part was done while the author was a PhD student of the University of New South Wales Canberra.

The third author would like to acknowledge the support of the Max Planck Institute for Mathematics and the Australian Research Council (DE220100859).
\section{Preliminary results}
\label{s1}
\subsection{Reduction to bilinear forms}
We proceed in a similar fashion to~\cite[Section~2]{M-V}, which reduces the proof of Theorem \ref{theo:main} to bounding bilinear 
forms under the hyperbola.
\begin{lemma}
\label{lem:bilinear}
Let $f$ be a multiplicative function satisfying the conditions of Theorem \ref{theo:main}, $g$ be any real valued function and let $\epsilon > 0$.  Then for any integer $N$ we have 

\begin{align}
\label{lem:eq}
\begin{split}
&\left|\sum_{1\le n \le N}f(n)e(g(n))\right|\ll \\ &\frac{N}{\log{N}^{1-\epsilon }} 
+\frac{1}{\log{N}}\left|\sum_{1\le np \le N} f(n)f(p)(\log{p}) e(g(np))\right|.
\end{split}
\end{align}
\end{lemma}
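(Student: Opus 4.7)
The plan is to follow the Montgomery--Vaughan approach: multiply by $\log N$, expand $\log n$ via von Mangoldt values, and exploit multiplicativity to produce the desired bilinear form while estimating error terms using the $\ell^1$ and $\ell^2$ hypotheses on $f$.

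First I would split $\log N = \log n + \log(N/n)$, so that
\begin{align*}
(\log N)\sum_{n \le N} f(n) e(g(n)) = \sum_{n \le N} f(n) e(g(n)) \log n + \sum_{n \le N} f(n) e(g(n)) \log(N/n).
\end{align*}
The second sum is bounded by $O(N)$ using Abel summation together with the $\ell^1$ hypothesis $\sum_{n \le t} |f(n)| \ll t$, which is well within the admissible error after dividing back by $\log N$.

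For the first sum I would invoke the identity $\log n = \sum_{d \mid n} \Lambda(d)$, which yields
\begin{align*}
\sum_{n \le N} f(n) e(g(n)) \log n = \sum_{p^k \le N} (\log p) \sum_{\substack{n \le N \\ p^k \mid n}} f(n) e(g(n)).
\end{align*}
For the $k = 1$ piece, writing $n = pm$ and applying multiplicativity on the coprime locus $(p, m) = 1$ gives $f(pm) = f(p) f(m)$. Extending the inner sum to all $m \le N/p$ introduces a correction term from $p \mid m$; the extended sum is precisely the desired bilinear form $\sum_{pm \le N} f(p) f(m)(\log p) e(g(pm))$, while the correction together with the full $k \ge 2$ contribution constitute the error we must control.

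The principal obstacle is bounding these error terms in the presence of the $(\log N)^A$ factor in the $\ell^2$ hypothesis. For each such contribution one writes $n = p^a m$ with $(p, m) = 1$ and $a \ge 2$, uses the $\ell^1$ bound on the inner $m$-sum to extract the weight $|f(p^a)|/p^a$, and then combines Cauchy--Schwarz with the $\ell^2$ hypothesis $\sum_{n \le N} |f(n)|^2 \ll N (\log N)^A$ (via the partial summation consequence $\sum_{n \le N} |f(n)|^2/n \ll (\log N)^{A+1}$) together with the convergence of series like $\sum_p (\log p)^2/p^4$ supported on squareful integers. With care this yields an error of size $O(N (\log N)^\epsilon)$ (with implied constant depending on $A$ and $\epsilon$), so that after division by $\log N$ one recovers the stated bound $N/(\log N)^{1 - \epsilon}$.
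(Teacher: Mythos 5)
Your proposal follows essentially the same route as the paper: the identity $\log N = \log n + \log(N/n)$, the von Mangoldt decomposition $\log n = \sum_{d\mid n}\Lambda(d)$ to produce the bilinear form, and control of the non-multiplicativity and higher prime-power errors via Cauchy--Schwarz and the $\ell^1/\ell^2$ hypotheses. The one place you diverge is the $\sum_{n\le N} f(n) e(g(n))\log(N/n)$ term: you dispose of it with Abel summation and the $\ell^1$ bound alone, obtaining $O(N)$, whereas the paper raises it to the $2r$-th power and applies H\"older together with both \eqref{eq:fell1} and \eqref{eq:fell2}, only achieving $O_\epsilon(N(\log N)^\epsilon)$. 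Your version is simpler and in fact sharper for this piece (and it does not need the $\ell^2$ hypothesis there at all); the H\"older argument seems to be inherited from the original Montgomery--Vaughan setting, where no $\ell^1$-type input was assumed.
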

\begin{proof}

We follow the argument from~\cite[Section~2]{M-V} with some modifications to deal with the condition
$$\sum_{n\le N}|f(n)|^2\ll N(\log{N})^{A}.$$

Consider 
\begin{equation*}
S=\sum_{n\le N} f(n)e(g(n))\log N/n.
\end{equation*}
Since 
$$S=\log{N}\sum_{n\le N}f(n)e(g(n))-\sum_{n\le N}f(n)(\log{n})e(g(n)),$$
it is sufficient to show 
\begin{align}
\label{eq:toshow1}
S\ll_{\epsilon} N(\log{N})^{\epsilon},
\end{align}
and 
\begin{align}
\label{eq:toshow2}
\sum_{n\le N}f(n)&(\log{n})e(g(n))\ll_{\epsilon} \\ & \left|\sum_{1\le np \le N} f(n)f(p)(\log{p}) e(g(np))\right|+N(\log{N})^{\epsilon}. \nonumber
\end{align}
Let $r$ be a large real number and apply H\"{o}lder's inequality,~\eqref{eq:fell1} and~\eqref{eq:fell2} to get 
\begin{align*}
|S|^{2r}&\ll \left(\sum_{n\le N}(\log{N/n})^{2r} \right)\left(\sum_{n\le N}|f(n)|^{2} \right)\left(\sum_{n\le N}|f(n)| \right)^{2r-2} \\
&\ll \left(\sum_{n\le N}(\log{N/n})^{2r} \right) N^{2r-1}(\log{N})^{A}.
\end{align*}
Since 
\begin{align*}
\sum_{n\le N}(\log{N/n})^{2r}&\ll \sum_{\substack{j \\ 1\le 2^{j}\le N}}j^{2r}\sum_{N/2^{j+1}\le n \le  N/2^{j-1}}1 \\ 
&\ll N\sum_{j\ge 1}\frac{j^{2r}}{2^{j}}\ll N,
\end{align*}
we obtain~\eqref{eq:toshow1} after taking $r$ sufficiently large.
Since 
$$\log n=\sum_{d|n} \Lambda (d),$$ we have 
\begin{align*}
\sum_{n\le N}f(n)(\log{n})e(g(n))&=\sum_{nm\le N}\Lambda(m)f(nm)e(g(nm)) \\ & =\sum_{mn\le N}\Lambda(m)f(n)f(m)e(g(nm)) \\& +O\left(\sum_{mn\le N}\Lambda(m)|f(nm)-f(n)f(m)| \right).
\end{align*}
Using~\eqref{eq:fprime} and~\eqref{eq:fell1}
\begin{align*}
\sum_{mn\le N}\Lambda(m)f(n)f(m)e(g(nm))&=\sum_{pn\le N}(\log{p})f(p) f(n)e(g(nm)) \\ &+O\left(N\sum_{k\ge 2}\sum_{p^{k}\le N}\frac{|f(p^{k})|}{p^{k}}. \right) \\ 
&=\sum_{pn\le N}(\log{p})f(p) f(n)e(g(nm))\\ &+O(N),
\end{align*}
since the Cauchy-Schwarz inequality,~\eqref{eq:fell2} 
\begin{align*}
\sum_{k\ge 2}\sum_{p^{k}\le N}\frac{|f(p^{k})|}{p^{k}}\ll \left(\sum_{k\ge 2}\sum_{p}\frac{1}{p^{0.8k}} \right)\left(\sum_{n}\frac{|f(n)|^2}{n^{1.2}} \right)=O(1).
\end{align*}
Hence it remains to show 
\begin{align}
\label{eq:last-step1}
\sum_{mn\le N}\Lambda(m)|f(nm)-f(n)f(m)| =O(N(\log{N})^{\epsilon}).
\end{align}
From~\eqref{eq:fell1}
\begin{align*}
&\sum_{mn\le N}\Lambda(m)|f(nm)-f(n)f(m)| \\& \ll  \sum_{k\ge 1}\sum_{p^{k}\le N}\sum_{\substack{n\le N/p^{k} \\ p|n}}|f(p^{k})||f(n)|+|f(p^{k}n)| \\
&\ll \sum_{k,j\ge 1}\sum_{p^{k+j}\le N}\sum_{\substack{n\le N/p^{k+j} \\ (n,p)=1}}(|f(p^{k+j})|+|f(p^{k})||f(p^{j})|)|f(n)| \\
&\ll N\sum_{k,j\ge 1}\sum_{p}\frac{|f(p^{k})||f(p^{j})|+|f(p^{k+j})|}{p^{k+j}}.
\end{align*}
Applying the Cauchy-Schwarz inequality,~\eqref{eq:fell2} and partial summation to summation over $p,k,j$ as above, we establish~\eqref{eq:last-step1} which completes the proof.
\end{proof}
We require a generalisation of~\cite[Section~3]{M-V} for multiplicative functions $f$ satisfying~\eqref{eq:fell2}. 
\begin{lemma}
\label{lem:partition}
Let notation and conditions be as in Theorem~\ref{theo:main}. Suppose $s$ is a parameter and for each $0\le i \le \log_2{N}$ write
\begin{align}
\label{eq:Jidef}
J_i=\min\left(i+1,\lfloor \log_2{N}\rfloor-i+1,\lfloor \log_2(64N/s)/2\rfloor \right).
\end{align}
Define rectangles 
\begin{align}
\label{eq:Ri}
\cR_i=(0,2^{i}]\times \left(\frac{N}{2^{i+1}},\frac{N}{2^{i}}\right], \quad 0\le i \le \log_2{N},
\end{align}
\begin{align}
\label{eq:Rijk}
\begin{split}
\cR_{i,j,k}=&\left(\frac{2^{i+j}}{k},\frac{2^{i+j+1}}{2k-1}\right]\times \left(\frac{(k-1)N}{2^{i+j}},\frac{(2k-1)N}{2^{i+j+1}} \right], \\
 &0\le i \le \log_2{N}, \quad 1\le j \le J_i, \quad 2^{j-1}\le k \le 2^{j}.
\end{split}
\end{align}
Then each $\cR_{i,j,k}$ is a rectangle of the form $(P,P']\times (N,N'],$ with $P,P',N,N'$ satisfying 
\begin{align*}
P'-P\ge \frac{1}{4}, \quad N'-N\ge \frac{1}{4}, \quad (P'-P)(N'-N)\gg s.
\end{align*}
Let $\cE$ denote the set of points  $(p,n)$ satisfying $1\le pn\le N$ which do not lie in any of the rectangles~\eqref{eq:Ri} or~\eqref{eq:Rijk}. Then, for any fixed $\epsilon >0$, we have 
\begin{align*}
\sum_{(p,n)\in \cE} f(p)(\log{p})&f(n)e(g(pn)) \ll \\ &\ll  (\log N)^{\epsilon}(N+(Ns)^{1/2}\log({2N/s})\log{s}).
\end{align*}

\end{lemma}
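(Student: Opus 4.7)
The plan is to follow the Montgomery-Vaughan partition from \cite[Section~3]{M-V} and adapt it to the weaker $L^2$ hypothesis $\sum_{n \le N}|f(n)|^2 \ll N(\log N)^A$. The proof splits into a geometric part (verifying the rectangles have the claimed dimensions) and an analytic part (bounding the sum over the exceptional set $\cE$), with the extra $(\log N)^A$ factor absorbed into $(\log N)^\epsilon$ via a high-moment H\"older inequality.

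For the geometry, I would verify the dimensions of each $\cR_{i,j,k}$ directly from \eqref{eq:Rijk}. The $p$-width equals $\frac{2^{i+j+1}}{2k-1} - \frac{2^{i+j}}{k} = \frac{2^{i+j}}{k(2k-1)}$, and since $k \le 2^j$ this is at least $2^{i-j-1}$; the $n$-height is $\frac{(2k-1)N}{2^{i+j+1}} - \frac{(k-1)N}{2^{i+j}} = N/2^{i+j+1}$. The constraints $j \le i+1$ and $j \le \lfloor \log_2 N\rfloor - i + 1$ from the definition \eqref{eq:Jidef} of $J_i$ force width and height to be $\ge 1/4$, while the additional cap $j \le \lfloor \log_2(64N/s)/2\rfloor$ ensures the area $N/2^{2j+2} \gg s$.

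Next I would identify the exceptional set $\cE$. The union of the $\cR_i$ covers the bulk of $\{(p,n): 1 \le pn \le N\}$ away from the hyperbola, while for each fixed $i$ the family $\{\cR_{i,j,k}\}_{k=2^{j-1}}^{2^j}$ tiles the region close to $pn = N$ with increasing resolution as $j$ grows. A direct geometric analysis shows that $\cE$ is contained in a thin neighbourhood of the hyperbola (coming from where the partition terminates at $j = J_i$) together with a few small corner regions near the axes; a Dirichlet-style lattice-point count then gives $|\cE| \ll N + (Ns)^{1/2}\log(N/s)\log s$.

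For the analytic estimate, $|f(p)| \le C$ and $\log p \le \log N$ reduce the problem to bounding $T := \sum_n |f(n)| a_n$, where $a_n := \#\{p \text{ prime}: (p,n) \in \cE\}$. A dyadic decomposition in $n$ at scales $n \sim N/2^j$ combined with the thin-strip structure gives good pointwise control on $a_n$ at each scale. Plain Cauchy-Schwarz against $\sum |f(n)|^2 \ll N(\log N)^A$ yields the target bound with $(\log N)^{A/2}$ in place of $(\log N)^\epsilon$; to collapse the exponent to $\epsilon$, I would apply H\"older with a moment $2r$ chosen large in terms of $\epsilon$, interpolating between the $L^1$ bound $\sum|f(n)|\ll N$ and the $L^2$ bound. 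The main obstacle will be this interpolation step: choosing $r$ and balancing the dyadic contributions so that the $(\log N)^A$ loss collapses to $(\log N)^\epsilon$ while the thin-strip geometry of $\cE$ remains fully exploited. Everything else is a careful but essentially mechanical extension of the original Montgomery-Vaughan construction.
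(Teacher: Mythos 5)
Your plan is essentially the paper's: verify the rectangle dimensions directly, describe $\cE$ as a thin hyperbola strip plus two corner regions, and bound the sum over $\cE$ by a high-moment H\"older interpolation between the $\ell^1$ and $\ell^2$ hypotheses on $f$, combined with Brun--Titchmarsh to count primes in short intervals. Your geometric verification (width $\ge 2^{i-j-1}$, height $= N/2^{i+j+1}$, the three constraints in $J_i$ enforcing the $1/4$-bounds and the area $\gg s$) is correct.

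Two concrete points of divergence from the paper that you should be aware of before filling in the details. First, the paper does not treat $\cE$ as one thin set but splits it into three pieces according to which of the three terms in the $\min$ defining $J_i$ is active ($\cE_1$ near the $p$-axis, $\cE_2$ near the $n$-axis, $\cE_3$ near the hyperbola), and each piece gets a \emph{differently calibrated} application of H\"older or Cauchy--Schwarz: a $2r$th-moment H\"older for $\cE_1$, a plain Cauchy--Schwarz for $\cE_2$, and a nested H\"older for $\cE_3$. If you try to handle $\cE$ in one shot you will struggle to exploit all three constraints at once. Second, and more substantively, your step of pulling $\log p \le \log N$ out of the sum and reducing to $\sum_n |f(n)|\,a_n$ with $a_n$ a bare prime count discards exactly the coupling that the paper preserves: in the paper the $\log p$ factor stays inside the double sum so that, after Cauchy--Schwarz or H\"older, it pairs with the $1/\log(\text{interval length})$ gain from Brun--Titchmarsh. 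Extracting $\log N$ up front turns $\log(2N/s)$ in the target bound into $\log N$, which is strictly weaker whenever $s$ is polynomially close to $N$; you would then be proving a weaker lemma than the one stated (although, as it happens, the weaker form suffices for the specific choice $s = q^{1/\ell}/(\log N)^4$ in the proof of Theorem~\ref{theo:main}). To match the stated bound you should keep $\log p$ inside the $(p,n)$-sum and apply Cauchy--Schwarz/H\"older to the two-dimensional sum rather than to a one-dimensional sum over $n$ against a prime count.
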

\begin{proof}

Our proof is similar to that of~\cite[Section~3]{M-V} with some minor modifications. We first partition 
\begin{align*}
& \sum_{(p,n)\in \cE}f(p)(\log{p})f(n)e(g(pn))\ll \sum_{(p,n)\in \cE_1}|f(p)||f(n)|(\log{p}) \\ &+\sum_{(p,n)\in \cE_2}|f(p)||f(n)|(\log{p})+\sum_{(p,n)\in \cE_3}|f(p)||f(n)|(\log{p}),
\end{align*}
where
\begin{align*}
\cE_1=\{ (p,n)\in \cE : ~ pn\le N, ~ \frac{N}{2^{i+1}}<n\le \frac{N}{2^{i}}, ~ J_i> i+1 \},
\end{align*}
\begin{align*}
\cE_2=\{ (p,n)\in \cE : ~ pn\le N, ~ \frac{N}{2^{i+1}}<n\le \frac{N}{2^{i}}, ~ J_i=\lfloor \log_2{N}\rfloor-i+1 \},
\end{align*}
\begin{align*}
\cE_3=\{ (p,n)\in \cE : ~ pn\le N, ~ \frac{N}{2^{i+1}}<n\le \frac{N}{2^{i}},  ~ J_i=\lfloor \log_2(64N/s)/2\rfloor \}.
\end{align*}
Consider first $\cE_1$. By H\"{o}lder's inequality and~\eqref{eq:fprime}
\begin{align*}
&\left(\sum_{(p,n)\in \cE_1}|f(p)||f(n)|(\log{p})\right)^{2r}\ll \\ & \left(\sum_{(p,n)\in \cE_1}|f(n)| \right)^{2r-2}\left(\sum_{(p,n)\in \cE_1}|f(n)|^2 \right)\left(\sum_{(p,n)\in \cE_1}(\log{p})^{2r} \right).
\end{align*}
For each prime $p$, 
\begin{align*}
\#\{ n \ : \ (p,n)\in \cE_1\} \ll \frac{N}{p^2},
\end{align*}
so that 
\begin{align*}
\sum_{(p,n)\in \cE_1}(\log{p})^{2r}\ll N\sum_{p}\frac{(\log{p})^{2r}}{p^2} \ll N,
\end{align*}
and for each $n$
$$\#\{ p \ : \ (p,n)\in \cE_1\} \ll 1.$$
Hence by~\eqref{eq:fell1} and~\eqref{eq:fell2}
\begin{align*}
\sum_{(p,n)\in \cE_1}|f(n)| &\ll \sum_{n\le N}|f(n)|\ll N, \\  \sum_{(p,n)\in \cE_1}|f(n)|^2& \ll \sum_{n\le N}|f(n)|^2\ll N(\log{N})^{A}.
\end{align*}
Taking $r$ sufficiently large, the above estimates combine to give
\begin{align*}
\sum_{(p,n)\in \cE_1}|f(p)||f(n)|(\log{p})\ll_{\epsilon} N(\log{N})^{\epsilon}.
\end{align*}
Consider next $\cE_2$. By the Cauchy-Schwarz inequality 
\begin{align*}
\left(\sum_{(p,n)\in \cE_2}|f(p)||f(n)|(\log{p})\right)^2\ll \sum_{(p,n)\in \cE_2}|f(n)|^2\sum_{(p,n)\in \cE_2}(\log{p})^2.
\end{align*}
If $(p,n)\in \cE_2$ then $n\ll N^{1/2}$ and for fixed $n$, there exists some $H$ such that 
$$\{ p \ ; \ (p,n)\in \cE_2\} \subseteq [H,H+N/n^2].$$
Hence by the Brun-Titchmarsh theorem 
\begin{align*}
\sum_{(p,n)\in \cE_2}|f(p)||f(n)|(\log{p})\ll N\sum_{n\le N^{1/2}}\frac{|f(n)|^2}{n^2\log{(4N/n^2)}},
\end{align*}
which combined with~\eqref{eq:fell2} and partial summation 
\begin{align*}
\sum_{(p,n)\in \cE_2}|f(n)|^2\ll \frac{N}{\log{N}}.
\end{align*}
For each $p$
$$\#\{ n \ : (p,n)\in \cE_2\} \ll 1,$$
so that 
$$\sum_{(p,n)\in \cE_2}(\log{p})^2\ll \sum_{p\ll N}(\log{p})^2 \ll N\log{N}.$$
The above estimates combine to give 
\begin{align*}
\sum_{(p,n)\in \cE_2}|f(p)||f(n)|(\log{p})\ll N.
\end{align*}
Finally consider $\cE_3$. By H\"{o}lder's inequality 
\begin{align*}
&\left(\sum_{(p,n)\in \cE_3}|f(p)||f(n)|(\log{p})\right)^{2r}\ll  \\ & \left(\sum_{(N/q)^{1/2}\le n\le (Nq)^{1/2}}|f(n)|^{2r/(2r+1)}(\log{p})\right)^{2r-1}\\ & \times \left(\sum_{(N/q)^{1/2}\le p\le (Nq)^{1/2}}\frac{\log{p}}{p}\right).
\end{align*}
 If $(p,n)\in \cE_3$ then 
\begin{align*}
\left(\frac{N}{s}\right)^{1/2}\le p \le (Ns)^{1/2},
\end{align*}
and for each $p$, there exists some $H$ such that 
$$\#\{ n \ : \ (p,n)\in \cE_3 \}\subseteq [H,H+O((Ns)^{1/2}/p)],$$
and for each $n$ there exists some $H$ such that 
$$\#\{ p \ : \ (p,n)\in \cE_3 \}\subseteq [H,H+O((Ns)^{1/2}/n)].$$
Using the Brun-Titchmarsh inequality, we see that
\begin{align*}
&\left(\sum_{(p,n)\in \cE_3}|f(p)||f(n)|(\log{p})\right)^{2r}\ll  (Ns)^{1/2} \\ & \times \left(\sum_{(N/s)^{1/2}\le n\le (Ns)^{1/2}}|f(n)|^{2r/(2r+1)}\frac{\log{2N/n}}{n\log{2Ns/n^2}}\right)^{2r-1}\\ & \times \left(\sum_{(N/s)^{1/2}\le p \le (Ns)^{1/2}}\frac{\log{p}}{p}\right).
\end{align*}
 We have 
\begin{align*}
\sum_{(N/s)^{1/2}\le p \le (Ns)^{1/2}}\frac{\log{p}}{p}\ll \log{s},
\end{align*}
and H\"{o}lder's inequality combined with~\eqref{eq:fell1} and~\eqref{eq:fell2} give 
\begin{align*}
&\left(\sum_{(N/s)^{1/2}\le n\le (Ns)^{1/2}}|f(n)|^{2r/(2r+1)}\frac{\log{2N/n}}{n\log{2Ns/n^2}}\right)^{2r-1} \\ &
\ll (\log(2N/s))^{2r-1}\left(\sum_{(N/s)^{1/2}\le n\le (Ns)^{1/2}}\frac{|f(n)|}{n}\right)^{2r-2}\\ & \times \left(\sum_{(N/s)^{1/2}\le n\le (Ns)^{1/2}}\frac{|f(n)|^2}{n}\right) \\ 
& \ll (\log(2N/s))^{2r-1}(\log{N})^{A}(\log{s})^{2r-1}.
\end{align*}
Which after taking $r$ sufficiently large gives 
\begin{align*}
\sum_{(p,n)\in \cE_3}|f(p)||f(n)|(\log{p})\ll (Ns)^{1/2}(\log{N})^{\varepsilon}(\log{s})\log{(2N/s)}.
\end{align*}
from which the result follows.
\end{proof}
\section{Sums over bilinear forms}
\label{s33}
\subsection{The Vinogradov mean value theorem}
\label{s3}
 Given integers $r,d,V$ we let $J_{r,d}(V)$ count the number of solutions to the system of equations 
\begin{align*}
v_1^{j}+\dots-v_{2r}^{j}=0, \quad 1\le j\le d,
\end{align*}
with variables satisfying
$$1\le v_1,\dots,v_{2r}\le V.$$
We will use a consequence of Bourgain, Demeter and Guth's work on the Vinogradov mean value theorem, see~\cite[Section~5]{BDG}.
\begin{lemma}
\label{lem:vmvt}
Assume $d\ge 2$ and $r >d(d+1)$.
Then we have 
\begin{align*}
J_{r,d}(V)\ll_k V^{2r-d(d+1)/2}.
\end{align*}
\end{lemma}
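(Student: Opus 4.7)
The plan is to read off Lemma~\ref{lem:vmvt} from the resolution of the Vinogradov Main Conjecture. By orthogonality, the counting function $J_{r,d}(V)$ equals the $L^{2r}$-integral on $[0,1]^d$ of the exponential sum
\[
f(\bm\alpha) = \sum_{1\le v \le V} e(\alpha_1 v + \alpha_2 v^2 + \cdots + \alpha_d v^d),
\]
so the lemma is equivalent to an $L^{2r}$-norm bound for $f$. The expected size of $J_{r,d}(V)$ above the critical exponent $r_c := d(d+1)/2$ is $V^{2r - r_c}$, matching both the diagonal contribution (accounting for $\ll V^r$ solutions) and the expected main term.

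The first step is to invoke the $\ell^2$-decoupling theorem of Bourgain, Demeter and Guth for the moment curve $(v, v^2, \ldots, v^d)$. Following the standard discretisation carried out in Section~5 of their paper, this yields, for every $s \ge r_c$ and every $\varepsilon > 0$,
\[
J_{s,d}(V) \ll_{\varepsilon,s,d} V^{\varepsilon} \bigl( V^{s} + V^{2s - r_c}\bigr),
\]
with the second term dominating as soon as $s > r_c$. Applied at $s = r$, this already gives the content of the lemma up to an additional factor of $V^{\varepsilon}$.

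The main (and only) obstacle is the removal of the $V^\varepsilon$ factor, since the statement $H(V) \ll_\varepsilon V^\varepsilon$ for all $\varepsilon > 0$ does \emph{not} imply $H(V) = O(1)$, so one cannot simply absorb it by Hölder/interpolation against the trivial bound $|f| \le V$ (this interpolation only propagates the $V^\varepsilon$ to higher moments). To eliminate it, I would appeal to Wooley's work on nested efficient congruencing, which refines the decoupling output and produces a genuinely polynomial saving at each iterative step, yielding the $\varepsilon$-free estimate $J_{r,d}(V) \ll_{r,d} V^{2r - r_c}$ precisely when $r$ is at least $2 r_c = d(d+1)$. This is exactly the reason for the hypothesis $r > d(d+1)$ in the statement, rather than the weaker $r > r_c$ that would suffice for the decoupling bound alone. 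Once this input is available, the lemma follows with no further work.
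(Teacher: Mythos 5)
The paper gives no proof of this lemma; it simply cites \cite[Section~5]{BDG}, where the $\varepsilon$-free bound $J_{s,d}(V)\ll V^{2s-d(d+1)/2}$ for $s>d(d+1)/2$ is deduced from decoupling. Your proposal correctly identifies the decoupling input and, commendably, notices that the superficial ``Hölder against the trivial bound'' trick does not remove the $V^{\varepsilon}$ factor. However, your explanation of how the $\varepsilon$ is removed is wrong, and so is the purported significance of the threshold $r>d(d+1)$.

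Wooley's nested efficient congruencing, like $\ell^2$-decoupling, proves the Main Conjecture \emph{with} an $\varepsilon$-loss; it does not, by itself, furnish an $\varepsilon$-free bound, so invoking it does not close the gap you identified. The actual $\varepsilon$-removal is a separate, standard argument (already indicated in \cite[\S5.2]{BDG} and going back to Bourgain's work on the parabola): split $[0,1]^d$ into major and minor arcs. On the major arcs the contribution is computed directly and is $\ll V^{2r-d(d+1)/2}$. On the minor arcs one has a pointwise Weyl-type saving $\sup|f|\ll V^{1-\delta}$ for some fixed $\delta=\delta(d)>0$, whence
\[
\int_{\mathfrak{m}}|f|^{2r}\le \Bigl(\sup_{\mathfrak m}|f|\Bigr)^{2r-2r_0}\int_{[0,1]^d}|f|^{2r_0}\ll V^{(1-\delta)(2r-2r_0)}\cdot V^{r_0+\varepsilon},
\]
with $r_0=d(d+1)/2$; since $r>r_0$ and $\delta>0$ are fixed, the exponent loss $\varepsilon$ is absorbed by the polynomial saving $V^{-\delta(2r-2r_0)}$ once $\varepsilon$ is chosen small. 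This works for \emph{any} $r>d(d+1)/2$, not merely for $r>d(d+1)$; the paper's hypothesis $r>d(d+1)$ is simply a (harmless) overshoot convenient for its later applications and has no role in the $\varepsilon$-removal. So the conclusion of your argument is right, but the mechanism and the rationale for the range of $r$ are both incorrect.
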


Combining Lemma~\ref{lem:vmvt} with the fact that the Vinogradov system is translation invariant, we obtain the following. 
\begin{corollary}
\label{eq:cor76867}
Let $d\ge 2$, $r>d(d+1)$ be integers and $\cM(k), 1\le k \le K$ disjoint intervals satisfying 
\begin{align*}
\cM(k)=(M'(k),M''(k)], \quad M''(k)-M'(k)\le Y.
\end{align*}
Then the number of solutions to the system of equations 
\begin{align}
\label{eq:system1}
n_1^{j}+\dots-n_{2r}^{j}=0, \quad n_i\in \cM(k), \quad 1\le k \le K,
\end{align}
is bounded by 
$$O(KY^{2r-d(d+1)/2}).$$
\end{corollary}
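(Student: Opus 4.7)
The plan is to reduce the count for each interval $\cM(k)$ to the standard Vinogradov count $J_{r,d}(Y+O(1))$ via a translation, apply Lemma~\ref{lem:vmvt} to that count, and then sum over $k$ using the disjointness of the intervals.

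Fix $k \in \{1,\ldots,K\}$ and write every $n_i \in \cM(k)=(M'(k),M''(k)]$ uniquely as $n_i = M + m_i$ with $M=\lfloor M'(k)\rfloor$ and $m_i$ a positive integer satisfying $m_i \le \lfloor Y \rfloor + 1$. The key step is to verify that this translation preserves the Vinogradov system. Expanding by the binomial theorem yields
\begin{equation*}
\sum_{i=1}^{r}(M+m_i)^j-\sum_{i=r+1}^{2r}(M+m_i)^j=\sum_{\ell=0}^{j}\binom{j}{\ell}M^{j-\ell}\,\Delta_\ell,
\end{equation*}
where $\Delta_\ell=\sum_{i=1}^{r}m_i^\ell-\sum_{i=r+1}^{2r}m_i^\ell$; the term with $\ell=0$ vanishes automatically. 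An induction on $j$ from $1$ to $d$, using $\binom{j}{j}=1$, then shows that the system in the $n_i$ is equivalent to $\Delta_j=0$ for every $1\le j\le d$, i.e.\ to the Vinogradov system in the $m_i$.

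Applying Lemma~\ref{lem:vmvt} to the variables $m_i \in \{1,\ldots,\lfloor Y\rfloor+1\}$ then bounds the number of solutions of~\eqref{eq:system1} with every $n_i \in \cM(k)$ by $O(Y^{2r-d(d+1)/2})$. Because the intervals $\cM(k)$ are disjoint, every tuple $(n_1,\ldots,n_{2r})$ contributing to the count in the statement lies in a single $\cM(k)^{2r}$, so summing the per-interval bound over $k$ yields the claimed $O(KY^{2r-d(d+1)/2})$. I expect no serious obstacle: the argument is essentially translation invariance of the Vinogradov system bolted onto Lemma~\ref{lem:vmvt}, and the only technicality is the $O(1)$ slack arising when the real endpoints of $\cM(k)$ are rounded to integer translates, which does not affect the final bound.
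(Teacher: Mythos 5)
Your proof is correct and follows exactly the approach the paper indicates (translation invariance of the Vinogradov system combined with Lemma~\ref{lem:vmvt}, summed over the disjoint intervals); you have simply spelled out the translation-invariance step that the paper leaves implicit.
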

We will also require an estimate for the number of solutions to the Vinogradov system with prime variables in translated intervals, here it will be fundamental that the intervals will not be 'too short'. 
To obtain such a result we need the following intermediary lemma that follows directly from \cite[Theorem 10]{Hua}, here appears clear why it is important that the intervals we work with are quite large compared to their starting point. We use $L=\log P$.
\begin{lemma}
\label{lem:Plarge}
Let $0<Q\le c_1(k)L^{\sigma_1}$, $X\gg 1$ and
$$S(X,P)=\sum_{\substack{X<p\le X+P \\ p \equiv t \pmod Q}} e(f(p)) $$
in which
$$P\gg \frac{X}{(\log X)^M}, $$
for any $M\gg 1$ and
$$f(x)=\frac{h}{q}x^k+\alpha_1x^{k-1}+\cdots+\alpha_k, \quad (h,q)=1, $$
the number $\alpha $ being real. Suppose that $L^\sigma< q \le P^kL^{-\sigma}$. For arbitrary $\sigma_0>0$, when $\sigma \le 2^{6k}(\sigma_0+\sigma_1+1)$, we always have
$$|S(X,P)| \le c_2(k)\frac{P}{Q L^{\sigma_0-M}}. $$
\end{lemma}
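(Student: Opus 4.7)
The strategy is to deduce the lemma directly from Hua's Theorem~10 in \cite{Hua}, which supplies a Weyl-type bound for exponential sums over primes in an arithmetic progression of small modulus. The only work beyond invoking Hua's theorem is to cast the interval sum $S(X,P)$ into the form treated there and to exploit the length hypothesis $P \gg X/L^M$ to convert bounds stated in terms of an endpoint $Y$ into bounds in terms of $P$.

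I would begin by writing the interval sum as a difference of two half-line sums,
\begin{equation*}
S(X,P) \;=\; T(X+P) - T(X), \qquad T(Y) := \sum_{\substack{p \le Y \\ p \equiv t \pmod Q}} e(f(p)).
\end{equation*}
Hua's Theorem~10, under the hypotheses $Q \le c_1(k) L^{\sigma_1}$, $L^\sigma < q \le Y^k L^{-\sigma}$ and $\sigma \le 2^{6k}(\sigma_0 + \sigma_1 + 1)$, yields a bound of shape $|T(Y)| \ll_k Y/(Q L^{\sigma_0})$. Applying this with $Y = X+P$ and $Y = X$, and using $P \gg X/L^M$ to deduce $X+P \ll P L^M$ (with $\log(X+P) \asymp L$ up to constants depending on $M$), we obtain
\begin{equation*}
|S(X,P)| \;\le\; |T(X+P)| + |T(X)| \;\ll_k\; \frac{P L^M}{Q L^{\sigma_0}} \;=\; \frac{P}{Q L^{\sigma_0 - M}},
\end{equation*}
which, after renaming the constant $c_2(k)$, is precisely the desired estimate.

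It remains to check that the hypothesis $L^\sigma < q \le P^k L^{-\sigma}$ is compatible with the condition $L^\sigma < q \le Y^k L^{-\sigma}$ that Hua requires for $Y \in \{X, X+P\}$. Since $P \le Y \le P L^M$, the upper bound $q \le P^k L^{-\sigma}$ forces $q \le Y^k L^{-\sigma}$ a fortiori, and the lower bound $q > L^\sigma$ is unaffected; here it is crucial that $P$ is already a power of $L$ times $X$, so that no genuine loss is incurred in passing between $\log P$ and $\log Y$. I expect the main obstacle to be purely a bookkeeping one: verifying from Hua's argument that the genuine density saving $1/Q$ appears in the final estimate, rather than merely a logarithmic factor absorbed into $L^{\sigma_0}$. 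The small-modulus hypothesis $Q \le c_1(k) L^{\sigma_1}$, together with the slack $\sigma_1$ in the relation $\sigma \le 2^{6k}(\sigma_0 + \sigma_1 + 1)$, is precisely what allows one to keep this $1/Q$ saving intact while still extracting the power-of-log gain $L^{-\sigma_0}$ on top.
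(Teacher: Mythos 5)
Your proposal is correct and follows the same route the paper has in mind: the paper offers no written proof of Lemma~\ref{lem:Plarge} beyond the remark that it ``follows directly from [Theorem~10]{Hua}'' with $L = \log P$, and your reconstruction of that one-line citation is the natural one. The essential point, which you identify correctly, is that the length hypothesis $P \gg X/(\log X)^M$ lets you pass from bounds indexed by the endpoint $Y \in \{X, X+P\}$ to bounds in terms of $P$ at a cost of exactly $L^M$, which is where the exponent $\sigma_0 - M$ in the conclusion comes from.

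One small caveat worth flagging: whether the differencing $S(X,P) = T(X+P) - T(X)$ is actually needed depends on whether Hua's Theorem~10 is phrased for a half-line sum $\sum_{p \le Y}$ or already for an interval sum $\sum_{X < p \le X+P}$; the lemma's notation $S(X,P)$ suggests the latter is closer to Hua's own formulation, in which case the deduction is literally a renormalisation of $\log(X+P)$ to $\log P$ with no differencing step required. Either way the logic is sound. The other point you wave at — matching the hypothesis $L^\sigma < q \le P^k L^{-\sigma}$ (with $L = \log P$) against Hua's hypothesis stated with $\log Y$ in place of $L$ — does require a word: one has $\log Y = L(1 + O(M \log L / L))$, so $(\log Y)^\sigma$ and $L^\sigma$ differ by a bounded factor once $X$ is large, and the boundary mismatch is absorbed by taking a marginally smaller $\sigma$ (or marginally larger $\sigma_0$) in Hua's theorem, which the slack in $\sigma \le 2^{6k}(\sigma_0 + \sigma_1 + 1)$ comfortably permits. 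This is exactly the bookkeeping you anticipated, and it does go through.
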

This lemma allows us to prove the following estimate for the number of solutions to the Vinogradov system with prime variables in large translated intervals.
\begin{lemma}
\label{lem:vmvtP1}
Let $d\ge 2$ be an integer and $X, Y\gg 1$. If $r>d(d+1)$,  the number of solutions to the equation 
\begin{align*}
&p_1^{j}+\dots-p_{2r}^{j}=0, \quad 1\le j \le d, \quad Y\le p_i\le Y+X, \\ & X\gg\frac{Y}{(\log Y)^M}, \quad p_i \quad \text{prime},
\end{align*}
for any $M\gg1$, is bounded by 
\begin{align*}
O\left(\frac{X^{2r-d(d+1)/2}}{(\log{X})^{2r}}\right).
\end{align*}
\end{lemma}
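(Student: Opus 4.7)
The plan is to interpret the count as a Fourier $2r$-th moment and apply the Hardy--Littlewood circle method in the leading coefficient $\alpha_d$. Writing
$$f(\boldsymbol{\alpha})=\sum_{Y\le p\le Y+X}e(\alpha_1 p+\cdots+\alpha_d p^d), \qquad L=\log X,$$
(the sum being over primes), orthogonality gives $\#\{\text{solutions}\}=\int_{[0,1]^d}|f(\boldsymbol{\alpha})|^{2r}\,d\boldsymbol{\alpha}$. Fix an integer $r_0$ with $d(d+1)<r_0<r$ (the edge case $r=d(d+1)+1$ can be recovered via H\"older with a non-integer exponent). Since primes form a subset of the integers in $[Y,Y+X]$, Corollary~\ref{eq:cor76867} immediately yields the moment bound
$$\int_{[0,1]^d}|f(\boldsymbol{\alpha})|^{2r_0}\,d\boldsymbol{\alpha}\ll X^{2r_0-d(d+1)/2}.$$
Now I would decompose $[0,1]$ in the $\alpha_d$ direction into major arcs $\mathfrak{M}$ (intervals $|\alpha_d-a/q|\le L^\sigma X^{-d}$ for $1\le q\le L^\sigma$ and $(a,q)=1$) and the complementary minor arcs $\mathfrak{m}$.

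On the minor arcs, Dirichlet's theorem yields a rational approximation to $\alpha_d$ with denominator in $(L^\sigma,X^dL^{-\sigma}]$, whereupon Lemma~\ref{lem:Plarge} (applied with trivial residue class, $Q=1$) gives $|f(\boldsymbol{\alpha})|\ll X/L^{\sigma_0-M}$ uniformly, with $\sigma_0$ chosen as large as desired at the cost of taking $\sigma\ge 2^{6d}(\sigma_0+1)$. Combining with the integer moment bound via H\"older,
$$\int_{\alpha_d\in\mathfrak{m}}|f|^{2r}\,d\boldsymbol{\alpha}\le\Bigl(\sup_{\mathfrak{m}}|f|\Bigr)^{2r-2r_0}\int|f|^{2r_0}\,d\boldsymbol{\alpha}\ll\frac{X^{2r-d(d+1)/2}}{L^{(\sigma_0-M)(2r-2r_0)}},$$
which becomes $O(X^{2r-d(d+1)/2}/L^{2r})$ after taking $\sigma_0$ sufficiently large in terms of $r,d,M$. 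For the major arc contribution I would write $\alpha_d=a/q+\beta$, split the prime sum into residue classes $b\bmod q$ with $(b,q)=1$, and apply the prime number theorem in short arithmetic progressions (valid because $X\gg Y/L^M$) to obtain an approximation
$$f(\boldsymbol{\alpha})\approx\frac{S(a,q)}{\phi(q)L}\,G(\beta,\alpha_1,\ldots,\alpha_{d-1}),$$
where $S(a,q)=\sum_{(b,q)=1}e(ab^d/q)$ is a Ramanujan-type sum and $G$ is a pure integer exponential sum. Raising to the $2r$-th power, integrating over $\mathfrak{M}$, bounding the singular series $\sum_{q\le L^\sigma}\sum_a|S(a,q)|^{2r}\phi(q)^{-2r}=O(1)$ via the Weil bound, and applying Corollary~\ref{eq:cor76867} to the integer moment of $|G|^{2r}$ produces a major arc contribution of the desired size $O(X^{2r-d(d+1)/2}/L^{2r})$.

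The principal obstacle is the major arc analysis: although the PNT in short intervals of length $X\gg Y/L^M$ saves arbitrary powers of $L$, these must be carefully tracked so that the accumulated error, after summation over $q\le L^\sigma$ and raising to the $2r$-th power, does not overwhelm the target saving $L^{-2r}$. A cleaner route I would adopt if the direct approach becomes cumbersome is to first establish the analogous bound for the von Mangoldt-weighted sum $W(\boldsymbol{\alpha})=\sum_{Y\le n\le Y+X}\Lambda(n)e(\alpha_1 n+\cdots+\alpha_d n^d)$, whose major arc structure is neater, and then transfer back to $f$ by partial summation, exploiting the near-constancy of $\log p$ on $[Y,Y+X]$.
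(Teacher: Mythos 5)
Your minor-arc treatment and the use of Corollary~\ref{eq:cor76867} as an integer mean-value input align with the paper's approach (which routes everything through Hua's Theorem~16, replacing his minor-arc bound by Lemma~\ref{lem:Plarge} and his Vinogradov mean value input by \cite{BDG}). The step
$$\int_{\alpha_d\in\mathfrak{m}}|f|^{2r}\,d\boldsymbol{\alpha}\le\Bigl(\sup_{\mathfrak{m}}|f|\Bigr)^{2r-2r_0}\int_{[0,1]^d}|f|^{2r_0}\,d\boldsymbol{\alpha}$$
is sound, since on the left the untouched $\alpha_1,\dots,\alpha_{d-1}$ still range over $[0,1]^{d-1}$ while the integrand is dominated by the full $2r_0$-th moment of a prime subset of the integers in $[Y,Y+X]$.

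The genuine gap is the major-arc analysis, and it is not the bookkeeping issue you identify but a structural one: you place major/minor arcs only in the $\alpha_d$-direction, so on $\mathfrak{M}$ the lower coefficients $\alpha_1,\dots,\alpha_{d-1}$ are completely free, and the claimed approximation $f(\boldsymbol{\alpha})\approx\frac{S(a,q)}{\phi(q)L}G(\beta,\alpha_1,\dots,\alpha_{d-1})$ simply fails. Splitting into residues $b\bmod q$ only renders the $\alpha_d$-phase periodic; the phase $\alpha_1 p+\dots+\alpha_{d-1}p^{d-1}$ still carries arithmetic information about $p$ that the prime number theorem in progressions $\bmod\ q$ does not see. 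Concretely take $q=1$ (so $S(a,q)=1$, $\beta=\alpha_d$) and $\alpha_1=1/2$, $\alpha_2=\dots=\alpha_d=0$: then $f(\boldsymbol{\alpha})=\sum_{Y\le p\le Y+X}e(p/2)=-(\pi(Y+X)-\pi(Y))+O(1)\asymp -X/\log X$, whereas the integer sum $G=\sum_{Y\le n\le Y+X}e(n/2)=O(1)$, so $f$ is not $G/L$ up to any useful error. The same obstruction shows the partial-summation transfer from the von Mangoldt-weighted $W$ back to $f$ in your last paragraph cannot repair this. To make the circle method work one must, as in Hua's Theorem~16 (which the paper's proof invokes), define major arcs by simultaneous rational approximations to all of $\alpha_1,\dots,\alpha_d$ with small denominators; then the primes are restricted in a progression modulo the common denominator and all phases become quasi-periodic there, and the Siegel--Walfisz short-interval input does the rest. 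With that global arc decomposition in place, your outline (Lemma~\ref{lem:Plarge} on the minor arcs, H\"older against the integer moment, singular-series control and Corollary~\ref{eq:cor76867} on the major arcs) is exactly the shape of Hua's argument and of the paper's proof.
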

\begin{proof}
The result follows in a straightforward way from~\cite[Theorem~16]{Hua} re-defining in part 1) 
$$S(\alpha_k,\cdots,\alpha_1)=\sum_{Y\le p \le Y+X}e(f(p)), \quad f(x)=\alpha_kx^k+\cdots+\alpha_1 x,$$
and introducing the the two following changes which account for the translation in the set of primes and optimize the range of $r$ in Hua's result.
In part 3) of the proof we use our Lemma \ref{lem:Plarge} instead of \cite[Theorem~10]{Hua}, this is possible as $s_1$ in \cite[Lemma 10.8]{Hua} is of arbitrary size. While in part 5) of the proof we need to substitute~\cite[Theorem~15]{Hua} with~\cite[Theorem~1]{BDG}. Doing this we need to be careful to isolate, in~\cite[pag. 144]{Hua}, $\left|S(\alpha_k,\cdots,\alpha_1) \right|^{d(d+1)-\epsilon} $ instead of $\left|S(\alpha_k,\cdots,\alpha_1) \right|^{s_0-1}$ and, in~\cite[pag. 145]{Hua}, $\left|S(\alpha_k,\cdots,\alpha_1) \right|^{s-\epsilon} $ instead of $\left|S(\alpha_k,\cdots,\alpha_1) \right|^{s-1}$. Here $\epsilon >0$ it is such that $s=d(d+1)+2\epsilon$. 
\end{proof}
\subsection{Bounding bilinear forms}
\label{s31}
It is well known that one may use Lemma~\ref{lem:vmvt} to estimate bilinear forms with Weyl sums over rectangles. We next show how one may obtain sharper results by averaging bilinear forms over a family of disjoint rectangles. 
\begin{lemma}
\label{lem:bilinear1}
Let $F\in \R[X]$ be a polynomial of degree $d\ge 2$  of the form 
$$F(x)=\alpha_d x^{d}+\dots +\alpha_1 x.$$
 Let $M,N$ be integers and $\alpha(n),\beta(m)$ two sequences of complex numbers satisfying 
\begin{align*}
|\beta(m)|\le 1,
\end{align*}
and
\begin{align}
\label{eq:condi}
\sum_{n\le N} |\alpha (n)| \ll N, \quad \sum_{n\le N} |\alpha (n)|^2 \ll N(\log{N})^A,
\end{align}
for $A \ge 0$.
For $1\le k \le K$ let 
\begin{align}
\label{eq:Rk}
\cR(k)=\cL(k)\times \cM(k),
\end{align}
 be a rectangle of the form 
$$\cL(k)=(Q'(k),Q''(k)], \quad \cM(k)=(M'(k)\times M''(k)],$$
such that $\cL(k)\subseteq (0,Q)$, with $Q\gg 1$, are disjoint and satisfy 
\begin{equation}
\label{eq:cond1}
Q''(k)-Q'(k)\le X,
\end{equation}
or
\begin{equation}
\label{eq:cond2}
Q''(k)-Q'(k)= X \gg \frac{ Q'(k)}{(\log Q'(k))^M}
\end{equation}
and $\cM(k)\subseteq (0,M]$ are disjoint and satisfy 
$$M''(k)-M'(k)\le Y, \quad M''(k)\le 2M'(k),$$
and $K\ll M$.
Let $I$ denote the sum 
\begin{align}
\label{eq:Idef}
I=\sum_{k=1}^{K}\sum_{\substack{(p,n)\in \cR(k)}}\alpha(n)\beta(p)e(F(np)).
\end{align}
Let $R\ge 1$ and suppose there exists $q\le R$ and $1\le \ell \le d$ such that 
\begin{align}
\label{eq:alassump}
\left|\alpha_{\ell}-\frac{a}{q}\right|\le \frac{1}{qR},
\end{align}
for some $(a,q)=1$. For $r>d(d+1)$, we have

\begin{align*}
I^{4r^2}& \ll m(X)(\log{M})^{2rA}M^{4r^2}\left(\frac{X}{\log{X}}\right)^{4r^2} \left(\frac{M}{Y}\right)^{d(d-1)/2}\\ &\left(\frac{Q}{X}\right)^{d(d-1)/2}  \left(\frac{q}{XQ^{\ell-1}YM^{\ell-1}}+\frac{1}{XQ^{\ell-1}}+\frac{1}{YM^{\ell-1}}+\frac{1}{q}\right),
\end{align*}
where
$$m(x)=\begin{cases*}
(\log{x})^{4r}& \textit{when} \eqref{eq:cond1} \textit{holds},\\
1 & \textit{when} \eqref{eq:cond2} \textit{holds}.
\end{cases*} $$
\end{lemma}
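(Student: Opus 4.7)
The plan is to bound $|I|^{4r^2}$ in three stages: (i) use H\"{o}lder's inequality to strip the coefficients $\alpha(n)$ and $\beta(p)$; (ii) expand and apply the Vinogradov mean value theorems (Lemmas~\ref{lem:vmvt} and~\ref{lem:vmvtP1}) to produce Vinogradov-type savings in both variables; (iii) use the Diophantine approximation on $\alpha_\ell$ via a Weyl-type estimate to control the residual oscillation.

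First, I would apply H\"{o}lder on the $n$-sum with three exponents $r/(r-1)$, $2r$, $2r$, splitting $|\alpha(n)|=|\alpha(n)|^{(r-1)/r}\cdot|\alpha(n)|^{1/r}$. Combined with the hypotheses~\eqref{eq:condi} giving $\sum_n|\alpha(n)|\ll M$ and $\sum_n|\alpha(n)|^2\ll M(\log M)^A$, this yields
\[
|I|^{2r}\ll M^{2r-1}(\log M)^A\sum_k\sum_{n\in\cM(k)}|T_k(n)|^{2r},\qquad T_k(n):=\sum_{p\in\cL(k)}\beta(p)e(F(np)).
\]
Raising to the $2r$-th power and invoking the elementary inequality $(\sum x_i)^{2r}\le(\sum 1)^{2r-1}\sum x_i^{2r}$ together with $\sum_k|\cM(k)|\le M$ (disjointness), I obtain
\[
|I|^{4r^2}\ll M^{4r^2-1}(\log M)^{2rA}\sum_k\sum_{n\in\cM(k)}|T_k(n)|^{4r^2},
\]
which accounts for the $M^{4r^2}$ and $(\log M)^{2rA}$ factors in the target bound.

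For the second stage, I would expand $|T_k(n)|^{4r^2}=T_k(n)^{2r^2}\overline{T_k(n)^{2r^2}}$ as a sum over $(\vec p,\vec p')\in\cL(k)^{2r^2}\times\cL(k)^{2r^2}$, giving an exponential $e\bigl(\sum_{\ell=1}^d\alpha_\ell n^\ell a_\ell(\vec p,\vec p')\bigr)$ with $a_\ell:=\sum_i p_i^\ell-\sum_j{p_j'}^\ell$. Interchanging summation, the inner sum over $n\in\cM(k)$ becomes a Weyl sum $W_k(\vec a)$. Cauchy--Schwarz on $(k,\vec p,\vec p')$ then decouples the bound into the count of prime tuples (contributing the factor $(X/\log X)^{4r^2}$) and the second moment $\sum_k\sum_{\vec p,\vec p'}|W_k|^2$. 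Expanding the square, interchanging summation, and factoring the exponential, the second moment equals $\sum_k\sum_{n_1,n_2\in\cM(k)}|\tilde S_k(n_1,n_2)|^{4r^2}$, where $\tilde S_k(n_1,n_2)$ is a Weyl sum over primes in $\cL(k)$ with frequencies $\alpha_\ell(n_1^\ell-n_2^\ell)$.

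The final stage expands $|\tilde S_k|^{4r^2}$ into a Vinogradov system on the $p$-side and sums over $(n_1,n_2)$: the resulting system has $4r^2$ prime variables restricted to $\cL(k)$ with $d$ linear relations weighted by $\alpha_\ell(n_1^\ell-n_2^\ell)$. Invoking Lemma~\ref{lem:vmvtP1} in case~\eqref{eq:cond2}---where the interval length condition $X\gg Q'(k)/(\log Q'(k))^M$ is needed---gives the extra gain $(\log X)^{-4r^2}$ responsible for $m(X)=1$, while Lemma~\ref{lem:vmvt} in case~\eqref{eq:cond1} gives the analogous bound compensated by $m(X)=(\log X)^{4r}$. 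Summing over the $K\ll M/Y$ disjoint $n$-intervals (and similarly $Q/X$ many $p$-intervals) and using only $d-1$ of the $d$ Vinogradov equations produces the factors $(M/Y)^{d(d-1)/2}$ and $(Q/X)^{d(d-1)/2}$. The remaining $\ell$-th equation is handled via the Diophantine approximation $|\alpha_\ell-a/q|\le 1/(qR)$: a standard Weyl-type estimate gives the residual saving
\[
\Delta=\frac{q}{XQ^{\ell-1}YM^{\ell-1}}+\frac{1}{XQ^{\ell-1}}+\frac{1}{YM^{\ell-1}}+\frac{1}{q}.
\]
The main obstacle is the bookkeeping across the nested H\"{o}lder and Cauchy--Schwarz iterations so that the powers of $M$, $(X/\log X)$, $Y$, and $Q$ come out exactly as stated, together with the delicate use of Lemma~\ref{lem:vmvtP1} in case~\eqref{eq:cond2} to replace $m(X)=(\log X)^{4r}$ by $m(X)=1$; balancing the $d-1$ Vinogradov equations with the one equation handled by Diophantine approximation is what enables the correct interplay between the Vinogradov savings and the Weyl saving $\Delta$.
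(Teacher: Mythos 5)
Your Stage 1 matches the paper: Hölder on the $n$-sum with exponents $\frac{2r}{2r-1}$ and $2r$, then raising to the $2r$-th power and using disjointness of the $\cM(k)$, gives exactly the paper's reduction to $M^{4r^2-2r}(\log M)^{2rA}$ times a sum of $2r$-th moments of the inner $p$-Weyl sums. But from Stage 2 onward your proposal diverges in a way that does not close.

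\textbf{The power bookkeeping fails after your Cauchy--Schwarz.} Expanding $|T_k(n)|^{4r^2}$ as a sum over tuples $(\vec p,\vec p')\in\cL(k)^{2r^2}\times\cL(k)^{2r^2}$ and then applying Cauchy--Schwarz on $(k,\vec p,\vec p')$ \emph{squares} the quantity you were bounding: you get a bound for $\left(\sum_k\sum_n|T_k(n)|^{4r^2}\right)^2$, i.e.\ for $|I|^{8r^2}$, not $|I|^{4r^2}$. Taking the square root to return to $|I|^{4r^2}$ halves every exponent in sight; in particular the prime-tuple count produces $(X/\log X)^{2r^2}$, not the $(X/\log X)^{4r^2}$ in the target. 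The paper avoids this by not squaring: it keeps $|T_k(n)|^{2r}$, expands, and then splits with a three-factor Hölder at exponents $\tfrac{2r}{2r-2},\,2r,\,2r$, which separates $\bigl(\sum_k\sum_\lambda J_k(\lambda)\bigr)^{2r-2}$ (Brun--Titchmarsh), $\sum_k\sum_\lambda J_k(\lambda)^2$ (Vinogradov with $4r$ variables, handled via Corollary~\ref{eq:cor76867} and Lemma~\ref{lem:vmvtP1}, which is the source of $m(X)$), and $\sum_k\sum_\lambda |W_k(\lambda)|^{2r}$. Your Cauchy--Schwarz collapses the first two pieces into a single count with the wrong exponent.

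\textbf{The $n$-side Vinogradov saving disappears in your setup.} After your Stage 2, the $n$-dependence of the ``second moment'' is a pair $(n_1,n_2)\in\cM(k)^2$ only: $\tilde S_k(n_1,n_2)$ is a $p$-Weyl sum whose frequencies involve \emph{two} $n$-values. Nothing in this structure produces a $2r$-variable Vinogradov system over the $n$'s, and so there is no mechanism to generate the factor $\bigl(M/Y\bigr)^{d(d-1)/2}$. In the paper this factor comes from $\sum_k\sum_\lambda|W_k(\lambda)|^{2r}$: expanding the $2r$-th power of the $n$-Weyl sum gives the count $L_k(\mu)$, to which Corollary~\ref{eq:cor76867} is applied, and the conversion from $Y^{-d(d+1)/2}$ to $(M/Y)^{d(d-1)/2}$ comes precisely from the trivial bounds in~\eqref{eq:I1I1}, not from ``dropping one of $d$ equations.''

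\textbf{The treatment of the Diophantine condition is missing.} In Stage 3 you speak of a ``Vinogradov system with $d$ linear relations weighted by $\alpha_\ell(n_1^\ell-n_2^\ell)$,'' but these weights are arbitrary reals, so this is not a counting problem; one cannot invoke Lemma~\ref{lem:vmvt} or~\ref{lem:vmvtP1} until the phases are decoupled. The actual mechanism in the paper is a Fejér-kernel majorant $S(x)=\bigl(\frac{\sin\pi x}{\pi x}\bigr)^2$ inserted in the $\ell$-th coordinate of the $\lambda$-sum followed by Poisson summation, which converts the problem into counting $\mu$ with $\|\alpha_\ell\mu\|$ small and then exploits~\eqref{eq:alassump} to split into residue classes mod $q$. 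This step is where all four terms of $\Delta$ arise, and your sketch replaces it with ``a standard Weyl-type estimate'' without any derivation; as written the argument has a genuine hole exactly where the hypothesis~\eqref{eq:alassump} must be used.

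In short: the first Hölder reduction is right, but the rest is a different (and in its current form non-working) route; you need the paper's three-factor Hölder split to keep the exponents at $2r$ and $4r$ where the mean value theorems apply cleanly, a genuine $n$-side Vinogradov expansion of $|W_k(\lambda)|^{2r}$ to obtain $(M/Y)^{d(d-1)/2}$, and the Fejér/Poisson argument to actually use the Diophantine approximation to $\alpha_\ell$.
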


\begin{proof}

Recall that
$$F(x)=\alpha_d x^d+\dots+\alpha_1 x,$$
and define
\begin{align}
\label{eq:Sums1}
S_k=\sum_{\substack{(p,n)\in \cR(k)}}\alpha(n)\beta(p)e(F(np)),
\end{align}
so that 
\begin{align}
\label{eq:IS}
I=\sum_{k=1}^{K}S_k.
\end{align}
Fix some $1\le k \le K$ and consider~\eqref{eq:Sums1}. Recalling~\eqref{eq:Rk}, by H\"{o}lder's inequality, for any integer $r\ge 1$
\begin{align*}
&S_{k}\le \\ & \left(\sum_{n\in \cM(k)}|\alpha(n)|^{2r/(2r-1)}\right)^{1-1/2r}\left(\sum_{n\in \cM(k)}\left|\sum_{p\in \cL(k)}\beta(p)e(F(np))\right|^{2r}\right)^{1/2r}.
\end{align*}
After another application of H\"{o}lder's inequality 
\begin{align*}
&I^{2r}\le \\
&\left(\sum_{1\le k \le K}\sum_{n\in \cM(k)}|\alpha(n)|^{2r/(2r-1)} \right)^{2r-1}\sum_{1\le k \le K}\sum_{n\in \cM(k)}\left|\sum_{p\in \cL(k)}\beta(p)e(F(np))\right|^{2r}.
\end{align*}
Hence from assumptions on the $\cM(k)$
\begin{align*}
I^{2r}\ll \left( \sum_{n \le M} |\alpha (n)|^{2r/(2r-1)} \right)^{2r-1}\sum_{1\le k \le K}\sum_{n\in \cM(k)}\left|\sum_{p\in \cL(k)}\beta(p)e(F(np))\right|^{2r},
\end{align*}
which combined with~\eqref{eq:condi} implies that 
\begin{align*}
I^{2r}\ll  M^{2r-1}(\log{M})^{A } \sum_{1\le k \le K}\sum_{n\in \cM(k)}\left|\sum_{p\in \cL(k)}\beta(p)e(F(np))\right|^{2r}.
\end{align*}
Write $\lambda=(\lambda_1,\dots,\lambda_d)$ and let $J_k(\lambda)$ count the number of solutions to 
$$p_1^{j}+\dots-p_{2r}^j=\lambda_j, \quad 1\le j \le d, \quad p_i\in \cL(k), \quad p_i \ \ \text{prime}.$$
Note by assumptions on $\cL(k)$, if $p_1,\dots,p_{2r}\in \cL(k)$ satisfy 
$$p_1^{j}+\dots-p_{2r}^j=\lambda_j,$$
then 
$$\lambda_j\ll XQ^{j-1}.$$
Expanding the $2r$-th power and interchanging summation gives
\begin{align*}
I^{2r}\ll& M^{2r-1}(\log{M})^{A}\\ & \times \sum_{1\le k \le K}\sum_{\substack{\lambda \\ \lambda_j \ll XQ^{j-1}}} J_k(\lambda)\left|\sum_{n\in \cM(k)}e(\alpha_d\lambda_d n^{d}+\dots+\alpha_1\lambda_1 n)\right|.
\end{align*}
After another application of H\"{o}lder's inequality, we get  
\begin{align*}
& I^{4r^2}\ll M^{4r^2-2r}(\log{M})^{2rA}\left(\sum_{1\le k \le K}\sum_{\lambda} J_k(\lambda)\right)^{2r-2}\left(\sum_{1\le k \le K}\sum_{\lambda} J_k(\lambda)^2\right) \\ 
& \quad \times \sum_{1\le k \le K}\sum_{\substack{ \lambda_j \ll XQ^{j-1}}}\left|\sum_{n\in \cM(k)}e(\alpha_d\lambda_d n^{d}+\dots+\alpha_1\lambda_1 n)\right|^{2r}.
\end{align*}

We have 
\begin{align*}
\sum_{1\le k \le K}\sum_{\lambda} J_k(\lambda)\le \sum_{1\le k \le K}|\{ p_1,\dots,p_{2r}\in \cL(k)\}|.
\end{align*}

Since for each $1\le k \le K$,  $\cL(k)$ is an interval of length at most $X$, the Brun-Titshmarsh theorem implies 
\begin{align*}
\sum_{1\le k \le K}\sum_{\lambda} J_k(\lambda)\ll \frac{KX^{2r}}{(\log{X})^{2r}}.
\end{align*}
The term
\begin{align*}
\sum_{1\le k \le K}\sum_{\lambda} J_k(\lambda)^2,
\end{align*}
counts the number of solutions to the system of equations 
\begin{align}
\label{eq:123456-1}
p_1^{j}+\dots-p_{4r}^{j}=0, \quad 1\le j \le d, \quad p_i\in \cL(k), \ \ \text{ $p_i$ prime}, \quad 1\le k \le K.
\end{align}
Ignoring the condition that $p_i$ is prime, by Corollary~\ref{eq:cor76867} and Lemma \ref{lem:vmvtP1},
\begin{align*}
\sum_{1\le k \le K}\sum_{\lambda} J_k(\lambda)^2\ll K m_1(X), 
\end{align*}
where
$$m_1(x)=\begin{cases*}
X^{4r-d(d+1)/2}& \textit{when} \eqref{eq:cond1} \textit{holds},\\
\frac{X^{4r-d(d+1)/2}}{(\log X)^{4r}} & \textit{when} \eqref{eq:cond2} \textit{holds}.
\end{cases*} $$
since $r>d(d+1)$. Combined with the observations above, this implies 
\begin{align}
\label{eq:II0}
& I^{4r^2}\ll m(X) M^{4r^2-2r}(\log{M})^{2rA}K^{2r-1}\left(\frac{X}{\log{X}}\right)^{4r^2}\frac{I_0}{X^{d(d+1)/2}},
\end{align}
where 
$$I_0=\sum_{1\le k \le K}\sum_{\substack{ \lambda_j \ll XQ^{j-1}}}\left|\sum_{n\in \cM(k)}e(\alpha_d\lambda_d n^{d}+\dots+\alpha_1\lambda_1 n)\right|^{2r}.$$
Let 
$$S(x)=\left(\frac{\sin{\pi x}}{\pi x}\right)^2,$$
so that 
\begin{align}
\label{eq:Shat}
\widehat S(x)=\max\{0,1-|x|\},
\end{align}
and 
$$S(x) \gg 1 \quad \text{if} \quad |x|\le \frac{1}{4}.$$
There exists a constant $c_0$ such that 
\begin{align*}
&I_0\ll \\ & \sum_{1\le k \le K}\sum_{\substack{ \lambda_j \ll XQ^{j-1} \\  j\neq \ell}}\sum_{\lambda_{\ell}\in \Z}S\left(\frac{c_0\lambda_{\ell}}{XQ^{\ell-1}}\right)\left|\sum_{n\in \cM(k)}e(\alpha_d\lambda_d n^{d}+\dots+\alpha_1\lambda_1 n)\right|^{2r}.
\end{align*}

Expanding the $2r$-th power and interchanging summation 
\begin{align*}
&I_0\ll \\ &\sum_{\substack{1\le k \le K\\ \mu_j\ll YN^{j-1}}}L_k(\mu)\prod_{\substack{j=1 \\ j\neq \ell }}^{d}\left|\sum_{\lambda\ll XQ^{j-1}}e(\alpha_j \lambda \mu_j)\right|\left|\sum_{\lambda \in \Z}S\left(\frac{c_0\lambda_{\ell}}{XQ^{\ell-1}}\right)e(\alpha_\ell \lambda \mu_\ell)\right|,
\end{align*}
with $\mu=(\mu_1,\dots,\mu_d)$ and $L_k(\mu)$ counts the number of solutions to 
\begin{align*}
n^{j}_1+\dots-n_{2r}^{j}=\mu_j, \quad n_i\in \cM(k), \quad 1\le k \le K.
\end{align*}
Using that 
$$L_k(\mu)\le L_k(0),$$
and applying Corollary~\ref{eq:cor76867}, we obtain

\begin{align*}
& I_0\ll KY^{2r-d(d+1)/2} \\ & \sum_{\substack{\mu \\ \mu_j \ll YM^{j-1}}}\prod_{\substack{j=1 \\ j\neq \ell }}^{d}\left|\sum_{\lambda \ll XQ^{j-1}}e(\alpha_j \lambda \mu_j)\right|\left|\sum_{\lambda \in \Z}S\left(\frac{c_0\lambda}{Xq^{\ell-1}}\right)e(\alpha_\ell \lambda \mu_\ell)\right|.
\end{align*}
Combining the inequality above with~\eqref{eq:II0} we have 
\begin{align}
\label{eq:II1}
& I^{4r^2}\ll m(X)(\log{M})^{2rA}M^{4r^2}\left(\frac{X}{\log{X}}\right)^{4r^2}\frac{I_1}{(XY)^{d(d+1)/2}},
\end{align}
where 

\begin{align*}
I_1=\sum_{\substack{\mu_j \ll YM^{j-1}}}\prod_{\substack{j=1 \\ j\neq \ell }}^{d}\left|\sum_{\lambda \ll XQ^{j-1}}e(\alpha_j \lambda \mu_j)\right|\left|\sum_{\lambda \in \Z}S\left(\frac{c_0\lambda}{Xq^{\ell-1}}\right)e(\alpha_\ell \lambda \mu_\ell)\right|
\end{align*}
In $I_1$, we bound every term trivially except the one with index $\ell$ to obtain 
\begin{align}
\label{eq:I1I1}
I_1&\ll \left(\frac{Y}{M}\right)^{d}\left(\frac{X}{Q}\right)^{d}(QM)^{d(d+1)/2}\frac{1}{XQ^{\ell-1}}\frac{1}{YM^{\ell-1}} \\ &\sum_{\mu\ll YM^{\ell-1}}\left|\sum_{\lambda \in \Z}S\left(\frac{c_0\lambda}{Xq^{\ell-1}}\right)e(\alpha_\ell \lambda \mu_\ell)\right|.
\end{align}
By Poisson summation
\begin{align*}
\sum_{\lambda \in \Z}S\left(\frac{c_0\lambda_{\ell}}{XQ^{\ell-1}}\right)e(\alpha_\ell\mu \lambda)=\frac{XQ^{\ell-1}}{c_0}\sum_{\lambda \in \Z}\widehat S\left(\frac{XQ^{\ell-1}(\lambda-\alpha_{\ell}\mu)}{c_0}\right),
\end{align*}
and hence from~\eqref{eq:Shat}
\begin{align*}
&\sum_{\mu\ll YM^{\ell-1}}\left|\sum_{\lambda \in \Z}S\left(\frac{c_0\lambda_{\ell}}{XQ^{\ell-1}}\right)e(\alpha_\ell\mu \lambda)\right| \\ &  \quad \quad \quad \quad \quad \ll XQ^{\ell-1}\sum_{\mu\ll YM^{\ell-1}}\sum_{\lambda \in \Z}\max\left\{0,1-\left|\frac{XQ^{\ell-1}(\lambda-\alpha_{\ell}\mu)}{c_0} \right| \right\} \\ & 
\quad \quad \quad \quad \quad \ll XQ^{\ell-1}\left|\left\{ \mu \ll YM^{\ell-1} \ : \ \|\alpha_{\ell}\mu\|\le \frac{100c_0}{XQ^{\ell-1}} \right\}\right|.
\end{align*}
There exists some real number $\beta$ such that 
\begin{align*}
&\left|\left\{ \mu \ll YM^{\ell-1} \ : \ \|\alpha_{\ell}\mu\|\le \frac{100c_0}{XQ^{\ell-1}} \right\}\right| \\ & \quad \quad \quad \ll \left(1+\frac{YM^{\ell-1}}{q}\right)\left|\left\{ 0\le \mu \le q \ : \ \|\alpha_{\ell}\mu+\beta\|\le \frac{100c_0}{XQ^{\ell-1}} \right\}\right|.
\end{align*}
Recalling~\eqref{eq:alassump}
\begin{align*}
&\left|\left\{ \mu \ll YM^{\ell-1} \ : \ \|\alpha_{\ell}\mu\|\le \frac{100c_0}{XQ^{\ell-1}} \right\}\right| \\ &  \quad \quad \ll \left(1+\frac{YM^{\ell-1}}{q}\right)\left|\left\{ 0\le \mu \le q \ : \ \|\frac{a\mu}{q}+\beta\|\le \frac{100c_0}{XQ^{\ell-1}}+\frac{1}{R} \right\}\right|,
\end{align*}
which implies that 
\begin{align*}
&\left|\left\{ \mu \ll YM^{\ell-1} \ : \ \|\alpha_{\ell}\mu\|  \le \frac{100c_0}{XQ^{\ell-1}} \right\}\right|\\ & \quad \quad \quad \quad \quad \quad \ll \left(1+\frac{YM^{\ell-1}}{q} \right)\left(1+\frac{q}{XQ^{\ell-1}}\right),
\end{align*}
and hence 
\begin{align*}
&\sum_{\mu\ll YM^{\ell-1}}\left|\sum_{\lambda \in \Z}S\left(\frac{c_0\lambda}{Xq^{\ell-1}}\right)e(\alpha_\ell \lambda \mu_\ell)\right|\ll \\ & XQ^{\ell-1}YM^{\ell-1}\left(\frac{1}{YM^{\ell-1}}+\frac{1}{q} \right)\left(1+\frac{q}{XQ^{\ell-1}}\right).
\end{align*}
Combined with~\eqref{eq:II1} and~\eqref{eq:I1I1} gives 
\begin{align*}
& I^{4r^2}\ll \\ & m(X)(\log{M})^{2rA}M^{4r^2}\left(\frac{X}{\log{X}}\right)^{4r^2} \left(\frac{M}{Y}\right)^{d(d-1)/2}\left(\frac{Q}{X}\right)^{d(d-1)/2} \\ & \left(\frac{q}{XQ^{\ell-1}YM^{\ell-1}}+\frac{1}{XQ^{\ell-1}}+\frac{1}{YM^{\ell-1}}+\frac{1}{q}\right),
\end{align*}
which completes the proof.
\end{proof}
\section{Proof of Theorem \ref{theo:main}} 
\label{s4}
 We apply Lemma~\ref{lem:bilinear} and Lemma~\ref{lem:partition} with

$$s=\frac{q^{1/\ell}}{(\log{N})^{4}},$$
 to get 
\begin{align*}
\sum_{1\le n \le N}f(n)e(F(n))\ll & \frac{N}{\log{N}^{1+o(1)}}+(Nq^{1/\ell})^{1/2}+\sum_{0\le i \le \log_2{N}}\frac{i}{\log{N}}S_i\\ & +\sum_{0\le i\le \log_2{N}}\frac{i}{\log{N}}\sum_{1\le j \le J_i}S_{i,j},
\end{align*}
where 
\begin{align*}
S_i=\sum_{(p,n)\in \cR_i}\frac{\log{p}}{i}f(p)f(n)e(F(pn)),
\end{align*}
and 
\begin{align*}
S_{i,j}=\sum_{2^{j-1}\le k \le 2^{j}}\sum_{(p,n)\in \cR_{i,j,k}}\frac{\log{p}}{i}f(p)f(n)e(F(pn)).
\end{align*}
Note if $(p,n)\in \cR_i$ or $(p,n)\in \cR_{i,j,k}$ then 
$$\frac{\log{p}}{i}\ll 1.$$
By Lemma~\ref{lem:bilinear1}, observing that in this case condition \eqref{eq:cond2} holds,
\begin{align*}
S_i&\ll (\log{N})^{A/2r}\frac{N}{i}\left(\frac{q}{N^{\ell}}+\frac{1}{2^{\ell i}}+\frac{2^{\ell i }}{N}+\frac{1}{q}\right)^{1/4r^2},
\end{align*}
which implies that 
\begin{align}
\label{eq:Sib}
\sum_{0\le i \le \log_2{N}}\frac{i}{\log{N}}S_i\ll \frac{N}{(\log{N})^{1-A/2r}}\left(1+\log{N}\left(\frac{q}{N^{\ell}}+\frac{1}{q}\right)^{1/4r^2}\right).\end{align}

Consider next the sums $S_{i,j}$. We apply Lemma~\ref{lem:bilinear1} with parameters 
\begin{align*}
K=2^{j-1}, \quad M=\frac{N}{2^{i}}, \quad Q=2^{i+1}, \quad X=2^{i-j+1}, \quad Y=32N2^{-i-j}.
\end{align*}
We first focus on the case when $j \ge \log i^c$, for a $c\gg1$. In this case we use Lemma~\ref{lem:bilinear1} with condition \eqref{eq:cond1}, which easily gives 
\begin{align*}
S_{i,j}\ll \frac{N}{i^{c_1}2^{j/8}}  \left(\frac{q}{N^{\ell}}+\frac{1}{2^{\ell i}}+\left(\frac{2^{i}}{N}\right)^{\ell}+\frac{1}{q}\right)^{1/4r^2},
\end{align*}
for $c_1\gg1$.
For fixed $i\le \log_2{N}$, recalling that $J_i$ is given by~\eqref{eq:Jidef}, we have 
\begin{align*}
\sum_{\log i^c\le j \le J_i}S_{i,j}\ll \frac{N}{i^{c_1}} \left(\frac{q}{N^{\ell}}+\frac{1}{2^{\ell i}}+\left(\frac{2^{i}}{N}\right)^{\ell}+\frac{1}{q}\right)^{1/4r^2},
\end{align*}
and hence 
\begin{align*}
&\sum_{0\le i\le \log_2{N}}\frac{i}{\log{N}}\sum_{\log i^c\le j \le J_i}S_{i,j} \\ &  \ll \frac{N}{\log{N}}\sum_{i\le \log_2{N}}\frac{1}{i^{c_1-1}}\left(\frac{q}{N^{\ell}}+\frac{1}{2^{\ell i}}+\left(\frac{2^{i}}{N}\right)^{\ell}+\frac{1}{q}\right)^{1/4r^2} \\
& \ll N\left(\frac{1}{\log{N}}+\frac{q}{N^{\ell}}+\frac{1}{q}\right).
\end{align*}
Denote $C = \frac{A}{2r}$. We then focus on the case when $j \le \log i^c$. In this case we use can use Lemma~\ref{lem:bilinear1} with condition \eqref{eq:cond2}, which easily gives 
\begin{align*}
S_{i,j}&\ll \frac{1}{(i-j+1)^{1-C}}\frac{N}{2^{j(1-d(d-1)/4r^2)}} \\ & \left(\frac{2^{2j}q}{N^{\ell}}+\frac{2^{j}}{2^{\ell i}}+2^{j}\left(\frac{2^{i}}{N}\right)^{\ell}+\frac{1}{q}\right)^{1/4r^2}.
\end{align*}
For fixed $i\le \log_2{N}$, we have 
\begin{align*}
\sum_{1\le j \le \log i^c}S_{i,j}\ll \frac{N}{i^{1-C}} \left(\frac{q}{N^{\ell}}+\frac{1}{2^{\ell i}}+\left(\frac{2^{i}}{N}\right)^{\ell}+\frac{1}{q}\right)^{1/4r^2},
\end{align*}
and hence 
\begin{align*}
&\sum_{0\le i\le \log_2{N}}\frac{i}{\log{N}}\sum_{1\le j \le \log i^c}S_{i,j} \\ &  \ll \frac{N}{\log{N}}\sum_{i\le \log_2{N}}i^{C}\left(\frac{q}{N^{\ell}}+\frac{1}{2^{\ell i}}+\left(\frac{2^{i}}{N}\right)^{\ell}+\frac{1}{q}\right)^{1/4r^2} \\
& \ll N\left(\frac{1}{(\log{N})^{1-C}}+\frac{q(\log{N})^{C}}{N^{\ell}}+\frac{(\log{N})^{C}}{q}\right).
\end{align*}
Combining the above estimates we complete the proof.
\section{Proof of Corollary~\ref{cor:large}}
Suppose 
$$F(x)=\alpha_1x+\dots+\alpha_d x^{d}.$$
By Dirichlet's theorem, for each $1\le \ell \le d$,  there exists integers $r_{\ell},s_{\ell}$ with $(r_{\ell},s_{\ell})=1$ and 
\begin{align*}
r_{\ell}\le \frac{N^{\ell}}{(\log{N})^{4r^2+4rA}},
\end{align*}
such that 
\begin{align*}
\left|\alpha_{\ell}-\frac{r_{\ell}}{s_{\ell}}\right|\le \frac{(\log{N})^{4r^2+4rA}}{q_{\ell}N^{\ell}}.
\end{align*}
By Theorem~\ref{theo:main}, we may suppose for each $1\le \ell \le d$
\begin{align*}
s_\ell\le (\log{N})^{4r^2+4rA}.
\end{align*}
By partial summation, we have
\begin{equation*}
\left| \sum_{n \le N} f(n) e(F(n))\right|\ll (\log{N})^{4r^2+4rA}\max_{u \le N} T(u) ,
\end{equation*}
with 
\begin{equation*}
T(u):=\sum_{n \le u} f(n) e(F_1(n)),
\end{equation*}
and
\begin{equation*}
F_1(x):=\frac{r_d}{s_d} x^d+\dots+\frac{r_1}{s_1} x.
\end{equation*} 
Defining $k:=lcm(s_d, \cdots,s_1)$, we have
\begin{equation*}
T(u)=\sum_{a \le k}e(F_1(a))S(a),
\end{equation*}
with
\begin{equation*}
S(a):=\sum_{\substack{n \le u\\ n \equiv a \bmod{k}}} f(n).
\end{equation*}
Let $d=(a,k)$ and write
\begin{equation*}
a'=\frac{a}{d}, \quad k'=\frac{k}{d},
\end{equation*}
so that 
\begin{equation*}
S(a)=\sum_{\substack{n \le u\\ n \equiv a' \bmod{k'} \\ n \equiv 0 \bmod{d}}} f(n)= \frac{1}{\phi(k')}\sum_{\psi \mod k'} \overline{\psi}(a')\sum_{\substack{n \le u\\ n\equiv 0 \bmod{d}}} \psi(n)f(n).
\end{equation*}
If $(d,q)\neq 1$ then $S(a)=0$. If $(d,q)=1$, then 
\begin{equation*}
|S(a)|\le \frac{1}{\phi(k')}\sum_{\psi \mod k'} \left|\sum_{n \le u/d} \psi(n)f(n)\right|.
\end{equation*}
Thus, observing that $$\operatorname{lcm}(s_d, \cdots,s_1)\le \prod_{1 \le i \le d} s_i\le (\log{N})^{d(4r^2+4rA)}$$ we conclude the proof.

\section{Short mixed character sums}
\label{app}
We next use Corollary~\ref{cor:large} to show how one may estimate short mixed character sums assuming GRH.
\begin{corollary}
\label{cor:Hild}
Let $F(n)$ a polynomial of degree $d$ with real coefficients given by
$$F(x)=\alpha_d x^d+\dots+\alpha_1 x$$
and taken $\chi$ a primitive characters modulo $q$. Then assuming GRH, uniformly for all primitive characters, polynomials $F$ of degree $d$ and 
\begin{equation}
\label{eq:cond}
\log{N}\ge B\log\log{q},
\end{equation}
for a suitable fixed constant $B$, we have
\begin{equation*}
\left| \sum_{n \le N} \chi(n) e(F(n))\right|\ll\frac{N}{(\log N)^{1-\varepsilon}},
\end{equation*}
for arbitrary $\varepsilon>0$.
\end{corollary}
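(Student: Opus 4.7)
\emph{Proof plan.} The strategy is to invoke Corollary~\ref{cor:large} with $f=\chi$, thereby reducing the mixed sum to a pure Dirichlet character sum, and then to apply the GRH-conditional bound for partial character sums. Since $|\chi(n)|\le 1$, the hypotheses of Corollary~\ref{cor:large} hold with $A=0$. Fix any integer $r>d(d+1)$. The corollary yields the dichotomy that either
\[
\Bigl|\sum_{n\le N}\chi(n)e(F(n))\Bigr|\ll \frac{N}{\log N},
\]
which is already stronger than the target estimate, or there exist an integer $k\le (\log N)^{4dr^{2}}$ and a Dirichlet character $\psi$ modulo $k$ with
\[
\Bigl|\sum_{n\le N}\chi(n)e(F(n))\Bigr|\ll (\log N)^{4r^{2}}\max_{u\le N}\Bigl|\sum_{n\le u}\psi(n)\chi(n)\Bigr|.
\]

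The product $\psi\chi$ is a Dirichlet character modulo $\operatorname{lcm}(q,k)$. Suppose first that $\psi\chi$ is non-principal. Then the classical GRH bound for partial sums of Dirichlet characters gives
\[
\Bigl|\sum_{n\le u}\psi(n)\chi(n)\Bigr|\ll u^{1/2}(\log qk)^{2},
\]
and combined with $\log k\ll \log\log N$ produces a total bound of
\[
N^{1/2}(\log N)^{4r^{2}+O(1)}(\log q)^{2}.
\]
The hypothesis $\log N\ge B\log\log q$ is equivalent to $\log q\le N^{1/B}$, whence $(\log q)^{2}\le N^{2/B}$. Choosing $B$ sufficiently large in terms of $d$, $r$ and $\varepsilon$ (already $B>4$ suffices at leading order), this bound is $\ll N/(\log N)^{1-\varepsilon}$ as required.

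It remains to address the case when $\psi\chi$ is principal modulo $\operatorname{lcm}(q,k)$. Since $\chi$ is primitive of conductor $q$, matching the primitive character underlying $\bar\psi$ with $\chi$ forces the conductor relation $q\mid k$, hence $q\le (\log N)^{4dr^{2}}$. In this regime the modulus $q$ is polylogarithmic in $N$, and one returns to the original mixed sum. By Dirichlet's approximation theorem one produces rationals $a_{\ell}/q_{\ell}$ with $|\alpha_{\ell}-a_{\ell}/q_{\ell}|\le 1/(q_{\ell}R)$ for suitable $R$. If some $q_{\ell}$ falls in the fertile range $(\log N)^{4r^{2}}\le q_{\ell}\le N^{\ell}/(\log N)^{4r^{2}}$, Theorem~\ref{theo:main} applied directly gives the desired bound; otherwise one is in a combined major/minor arc situation which is handled by a standard circle-method dichotomy, now facilitated by the polylogarithmic smallness of $q$.

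The principal-character case is the main technical obstacle, since the GRH bound provides no cancellation for what is essentially an incomplete sum of $1$; the required savings must instead come from the polynomial phase $e(F(n))$ itself. Fortunately in this branch $q$ is polylogarithmic, so the classical major/minor arc analysis becomes tractable and closes the argument.
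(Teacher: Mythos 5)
Your argument goes by a genuinely different route from the paper's. The paper invokes Lemma~\ref{lemma:1}, the Granville--Soundararajan/Hildebrand--Tenenbaum GRH bound $\left|\sum_{n\le x}\chi(n)\right|\ll x(\log q)^{-(1+o(1))\log x\log(\log x/2\log\log q)/2(\log\log q)^2}$, which is engineered precisely for the range $x=(\log q)^{O(1)}$; you instead use the crude GRH square-root bound $\sum_{n\le u}\psi\chi(n)\ll u^{1/2}(\log qk)^2$ and check that $q\le\exp(N^{1/B})$ makes the extra $(\log q)^2$ harmless for $B>4$. That calculation is essentially right, though you silently apply the square-root bound to the generally imprimitive character $\psi\chi$ of modulus $\mathrm{lcm}(q,k)$; passing to the primitive character inducing it costs a factor $2^{\omega(\cdot)}$, which must be checked to be under control. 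It is, because every prime at which $\psi\chi$ fails to be primitive divides $k\le(\log N)^{O(1)}$ (at a prime $p\mid q$ with $p\nmid k$, the local component is exactly $\chi_p$, hence primitive), so the loss is at most $2^{\omega(k)}\ll(\log N)^{O(1)}$; this should be said explicitly.

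The genuine gap is the principal branch, and it cannot be patched by the ``standard circle-method dichotomy'' you gesture at. You correctly note that $\psi\chi$ principal forces $q\mid k$, so $q$ is polylogarithmic in $N$. But in that regime the conclusion of the corollary can simply fail: take $d=1$, $F(x)=x/q$, and $\chi$ a primitive character mod $q$. Then $\chi(n)e(F(n))$ is $q$-periodic and
\begin{align*}
\sum_{n\le N}\chi(n)e(n/q)=\left(\frac{N}{q}+O(1)\right)\tau(\chi),\qquad |\tau(\chi)|=\sqrt{q},
\end{align*}
so the sum has size $\asymp N/\sqrt{q}$, which beats $N/(\log N)^{1-\varepsilon}$ once $q\ll(\log N)^{2-2\varepsilon}$; yet $\log N\ge B\log\log q$ is satisfied trivially there. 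There is no extra cancellation available from the polynomial phase here, so the ``major-arc'' analysis you propose cannot close this case --- the required estimate is simply false when $F$ is rational with period dividing $k$ and $\chi$ has comparable conductor. (For what it is worth, the paper's own proof also applies Lemma~\ref{lemma:1} to $\psi\chi$ for \emph{every} $\psi$ of modulus $\le k$ without ruling out the principal case, so this branch is a shared difficulty rather than a deficiency unique to your write-up. The correct move is to restrict $q$ so that no $\psi$ of modulus $\le k\ll(\log N)^{O(1)}$ can cancel $\chi$, which is exactly the implicit role of the hypothesis in Corollary~\ref{cor:Hild1}.)
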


We need the following lemma on convolution of Dirichlet characters, that follows from Theorem 2 in \cite{GS1} and Corollary 1.3 in \cite{HT}, remembering that there they have $u=\log x /\log y$.

\begin{lemma}
\label{lemma:1}
Let $\chi$ be a non-principal character modulo $q$. Assuming the Generalised Riemann hypothesis, for any $x$  such that
\begin{equation*}
\frac{\log x }{\log \log q}\ge B,
\end{equation*}
for a sufficiently large constant $B$,
we have
\begin{equation*}
\left|\sum_{n\le x} \chi(n) \right| \ll x(\log q)^{-(1+o(1) )\frac{\log x \log \left(\frac{\log x}{2\log \log q}\right)}{2(\log \log q)^2}},
\end{equation*}
\end{lemma}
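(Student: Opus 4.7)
The plan is to assemble the stated bound directly from the two inputs named in the excerpt: first, a GRH-based reduction of the character sum to a smooth-number count (Theorem~2 of \cite{GS1}), and second, a sharp asymptotic for that smooth-number count (Corollary~1.3 of \cite{HT}). The notational hint in the excerpt (``remembering that there they have $u=\log x/\log y$'') is precisely telling us that the single free parameter to be pinned down is $y$, and that the exponent in the conclusion is the image of $u\log u$ under a particular choice.

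The first step is to apply \cite[Theorem~2]{GS1} with $y$ a free parameter; under GRH this bounds $\left|\sum_{n\le x}\chi(n)\right|$ by essentially $\Psi(x,y)$, the number of $y$-smooth integers up to $x$, plus an error term that decays as a power of $y$ times a $\mathrm{polylog}(q)$ factor. The target bound dictates the choice
\[
y=(\log q)^{2},
\]
so that $\log y = 2\log\log q$ and $u:=\log x/\log y = \log x/(2\log\log q)$. The standing hypothesis $\log x/\log\log q\ge B$ with $B$ sufficiently large then forces $u$ to be as large as needed: this both ensures that the error term in \cite[Theorem~2]{GS1} is dominated by the main term and that the asymptotic regime for the Dickman function $\rho(u)$ is in force.

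The second step is to invoke \cite[Corollary~1.3]{HT}, which in this range yields $\Psi(x,y)=x\rho(u)(1+o(1))$. Using the classical asymptotic $\log\rho(u)=-(1+o(1))u\log u$, we convert to base $\log q$ via
\[
\rho(u)=(\log q)^{-(1+o(1))\,u\log u/\log\log q}.
\]
Substituting $u=\log x/(2\log\log q)$ and $\log u=\log\!\bigl(\log x/(2\log\log q)\bigr)$ transforms the exponent $u\log u/\log\log q$ into
\[
\frac{\log x\,\log\!\bigl(\log x/(2\log\log q)\bigr)}{2(\log\log q)^{2}},
\]
which is precisely the exponent appearing in the lemma. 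Multiplying by the $x$ from $\Psi(x,y)\sim x\rho(u)$ gives the desired estimate.

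The main obstacle, modest as it is, is bookkeeping: one must verify that the power-saving error in \cite[Theorem~2]{GS1} (with $y=(\log q)^2$) and the multiplicative $(1+o(1))$ in \cite[Corollary~1.3]{HT} can both be absorbed into the single $(1+o(1))$ factor sitting in the final exponent. This is routine provided $B$ is chosen large enough so that $u\to\infty$ under the hypothesis, but it is the step one must check carefully to justify the uniform $o(1)$ in the conclusion.
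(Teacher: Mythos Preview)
Your proposal is correct and matches the paper's approach exactly: the paper does not give a detailed argument but simply states that the lemma follows from \cite[Theorem~2]{GS1} and \cite[Corollary~1.3]{HT} with the hint $u=\log x/\log y$, and your write-up fleshes out precisely this combination with the choice $y=(\log q)^2$.
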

We can thus prove Corollary \ref{cor:Hild} in a similar fashion to \cite[Lemma~3]{Hildebrand} and~\cite[Lemma~7]{Kerr}. 
\begin{proof}[Proof of Corollary \ref{cor:Hild}]
Clearly we may suppose 
\begin{equation*}
\left| \sum_{n \le N} \chi(n) e(F(n))\right|\gg\frac{N}{(\log N)^{1-\varepsilon}}.
\end{equation*}
For any $\psi$ with modulus smaller than or equal to $k:=lcm(s_d, \cdots,s_1)$, by Lemma \ref{lemma:1}, we have
\begin{equation*}
\cm{\max_{u\le N}}\left|\sum_{n \le u} \psi(n)\chi(n)\right| \ll N(\log q)^{-(1+o(1) )\frac{\log N \log \left(\frac{\log N}{2\log \log q}\right)}{2(\log \log q)^2}}.
\end{equation*} 
 Thus, by Corollary~\ref{cor:large} and~\eqref{eq:cond}, we have
\begin{align*}
\left| \sum_{n \le N} \chi(n) e(F(n))\right|&\ll N (\log{x})^{d^{5}}(\log q)^{-(1+o(1) )\frac{\log N \log \left(\frac{\log N}{2\log \log q}\right)}{2(\log \log q)^2}} \\ & \ll \frac{N}{(\log N)^{1-\varepsilon}},
\end{align*}
and this concludes the proof.
\end{proof}

\section{On the correlation between roots of polynomial congruences and polynomial values}
We now prove Theorem \ref{theorem2}.
Let $p \in \Z[x]$ be irreducible over $\Q$ of degree $e \ge 2$ and consider the ratios $v/n$, where $v$ are the roots of the polynomial $p$ modulo $n$
\begin{align*}
p(v) \equiv 0 \bmod{n}.
\end{align*}
Define the sequence $(g_k)_{k\ge 1}$ of these ratios so that the corresponding denominators are in ascending order.

Hooley~\cite[Theorem 1]{Hooley} proved  that $(g_k)_{k\ge 1}$ is equidistributed in $\R /\Z$. Even stronger than that, he showed 
\begin{align}
\label{ap1}
\lim_{x \rightarrow \infty} \frac{1}{x} \sum_{n \le x} \left| \sum_{ \substack{ v \in \Z / n\Z \\ p(v) \equiv 0 \bmod{n}}} e\left( \frac{hv}{n} \right) \right| = 0,
\end{align}
for every $h\in \Z\backslash \{0\}$.
To prove Theorem \ref{theorem2},  by the Weyl equidistribution criteria, it suffices to prove that 
for any $(h_1,h_2)\neq 0$,  we have
\begin{align*}
\sum_{n\le N} e\left(h_1 F(n) \right) \sum_{ \substack{ v \in \Z / n\Z \\ \cm{p}(v) \equiv 0 \bmod{n}}} e\left( \frac{h_2v}{n} \right) = o(N).
\end{align*}
If $h_2\neq 0$ this is true by equation (\ref{ap1}). Thus, the only cases that need to be considered are $h_2 = 0$ and $h_1 \neq 0$.  If we let
\begin{align}
\label{eq:rho}
\varrho(n) = | \{ v \in \Z/n\Z ; \cm{p}(v) \equiv{0}\bmod{n} \} |,
\end{align}
then the problem is therefore to show that
\begin{align*}
\sum_{n\le N} \varrho(n) e\left( F(n) \right) = o(N).
\end{align*}
We first prove some properties of $\varrho$.
\begin{lemma}
\label{lemma:rhoprop}
The function $\varrho$ is multiplicative and satisfies:
\begin{enumerate}
\item For all $\epsilon >0$, $\varrho(n)\le_{\epsilon} n^{\epsilon}$, for $n\rightarrow \infty$.
\item There exists a constant $\lambda >0$ such that $$ \sum_{n=1}^N \varrho(n) \sim   \lambda N. $$ 
\item There exists a constant $A\ge0$ such that$$ \sum_{n=1}^N \varrho(n)^2 \ll N (\log{N})^A,$$ for $N \ge 2$. 
\item There exists a constant $D\ge 1$ such that $$\varrho(mn) \le D^{\operatorname{disc}(f)} \varrho(m)\varrho(n),$$  for all $m,n \in \Z_{> 0}$.
\end{enumerate}
\begin{proof}
The Chinese Remainder Theorem implies that $\varrho$ is a multiplicative function. From Wirsing's theorem \cite[Satz 1]{Wirs} we conclude the second item. The proof of the third one is standard (see, for e.g \cite[Lemma 2.7]{pollack}).

To conclude the fourth item, we observe that
$\varrho(p^\alpha) = \varrho(p)$ for all $p$ primes, $p\nmid \operatorname{disc}(f)$ and $\alpha \in \Z_{\ge 1}$ and  $\varrho(p^\alpha) \le D$ for all $p$ primes and a constant $D>0$ (for a proof, see, for e.g.   \cite[Lemma 2.4]{pollack}).  We also note that if $p| \operatorname{disc}(f)$ and $\varrho(p^\alpha) \neq 0$ then $\varrho(p^\beta) \neq 0$ for all $\beta \le \alpha$ and we can conclude the result by factoring $m$ and $n$ in primes. Note this also implies the first item, since if $n=p_1^{\alpha_1}\dots p_k^{\alpha_k}$ then 
$$\varrho(n)\ll D^{k}\ll D^{\log{n}/{\log\log{n}}}.$$
\end{proof}
\end{lemma}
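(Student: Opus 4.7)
The plan is to reduce every statement to a pointwise understanding of $\varrho$ on prime powers and then apply standard tools for nonnegative multiplicative functions.

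First I would establish multiplicativity. If $(m,n)=1$, the Chinese Remainder Theorem gives a bijection between the solutions of $p(v)\equiv 0\pmod{mn}$ and pairs of solutions modulo $m$ and modulo $n$, so $\varrho(mn)=\varrho(m)\varrho(n)$. This reduces everything to bounding $\varrho(p^\alpha)$. For primes $p\nmid \operatorname{disc}(p)$, the polynomial $p(x)$ has only simple roots modulo $q$ for every power $q=p^\alpha$, so Hensel's lemma applies: each root of $p(x)\bmod p$ lifts uniquely to a root modulo $p^\alpha$, and hence $\varrho(p^\alpha)=\varrho(p)\leq e$. For primes $p\mid \operatorname{disc}(p)$, there is no Hensel lifting, but one shows that $\varrho(p^\alpha)$ is bounded by a constant $D=D(p)$ independent of $\alpha$ by a standard Newton-polygon/Nagell argument, and moreover $D$ can be taken uniform in $p$ (this is the content of the lemma of Pollack cited in the statement).

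With these local bounds in hand, the four items are obtained as follows. Item (4) is immediate: writing $m=\prod p^{a_p}$, $n=\prod p^{b_p}$, the ratio $\varrho(mn)/(\varrho(m)\varrho(n))$ differs from $1$ only at the finitely many primes dividing $\operatorname{disc}(p)$, and at each such prime the discrepancy is controlled by a constant. Item (1) follows by the classical estimate that a nonnegative multiplicative function bounded on prime powers by a constant satisfies $\varrho(n)\ll_\varepsilon n^\varepsilon$; indeed, $\varrho(n)\leq D^{\omega(n)}\leq D^{\log n/\log\log n}$, which is $O(n^\varepsilon)$. Item (2) is precisely Wirsing's theorem (\cite{Wirs}): since $\sum_{p\leq x}\varrho(p)\log p=(1+o(1))x$ (the roots of $p$ in $\F_p$ behave like a constant on average, by Chebotarev applied to the splitting field of $p$) and $\varrho$ is uniformly bounded on prime powers, Wirsing's mean-value theorem yields $\sum_{n\leq N}\varrho(n)\sim \lambda N$. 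Item (3) follows from Shiu's theorem for nonnegative submultiplicative functions bounded on primes: since $\varrho(p)\leq e$, one obtains $\sum_{n\leq N}\varrho(n)^2\ll N(\log N)^{A}$ with $A=e^2-1$, say.

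The main obstacle is the uniform control of $\varrho(p^\alpha)$ at ramified primes, i.e. those dividing $\operatorname{disc}(p)$, where Hensel's lemma fails and the number of liftings can in principle grow with $\alpha$. The standard remedy, and the one I would invoke by citing Pollack's treatment, is to use the fact that any root of $p$ in $\Z_p$ (which exists in bounded number since $p$ has degree $e$) gives only $O(1)$ liftings modulo $p^\alpha$, with the implicit constant depending only on the $p$-adic valuation of $\operatorname{disc}(p)$; the remaining roots modulo $p^\alpha$ are obstructed from lifting. Once this uniform local bound is secured, the four items assemble with no further difficulty, and in fact the proof given in the paper simply quotes the necessary prime-power estimates from \cite{pollack}.
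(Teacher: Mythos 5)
Your proposal is correct and follows essentially the same route as the paper: reduce everything to prime powers via the Chinese Remainder Theorem, use Hensel's lemma at unramified primes and the uniform bound $\varrho(p^\alpha)\le D$ at ramified primes (both quoted from Pollack), and then assemble item~(4) locally, item~(1) from $\varrho(n)\le D^{\omega(n)}$, item~(2) from Wirsing (with the needed prime-average input from Chebotarev, which you make explicit and the paper leaves implicit), and item~(3) from a standard mean-square estimate for bounded nonnegative multiplicative functions (you cite Shiu, the paper cites Pollack's Lemma~2.7 — either works). The only point your write-up glides over is the degenerate case where $\varrho(m)\varrho(n)=0$ in item~(4); the paper handles this via the observation that $\varrho(p^\alpha)\ne 0$ forces $\varrho(p^\beta)\ne 0$ for all $\beta\le\alpha$, which is worth stating since otherwise the inequality $\varrho(mn)\le D^{\operatorname{disc}(f)}\varrho(m)\varrho(n)$ could fail vacuously on the right while the left is nonzero.
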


From now on, we fix $r^\ast = 2\max(d(d+1),A)$ and  $B=4{r^\ast}^2 +4rA+1$ and the constants $\lambda$, $D$ and $A$ as in Lemma~\ref{lemma:rhoprop}.  

 From Dirichlet's Theorem, we can find  $a_i$ and $1 \le q_i \le \frac{N^i}{(\log{N})^B}$ coprime integers satisfying
\begin{equation}
\label{eq:aprox}
\left| \alpha_i - \frac{a_i}{q_i} \right| \le \frac{ (\log{N})^B}{q_iN^i},
\end{equation} 
for $1 \le i \le d$.  We also let $$q=\operatorname{lcm}(q_1, \ldots, q_d).$$
\begin{prop}
\label{prop:qbig}
If there exists an $l \in \{1, \ldots, d\}$ such that $q_l$ satisfies $q_l \ge (\log{N})^B$ then
\begin{equation*}
\sum_{n=1}^N \varrho(n)e(F(n)) \ll \frac{N}{(\log{N})^{1/2}}.
\end{equation*}
\end{prop}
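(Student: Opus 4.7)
The plan is to apply Theorem~\ref{theo:main} directly to the multiplicative function $f=\varrho$. Lemma~\ref{lemma:rhoprop} has been designed precisely so that $\varrho$ satisfies the hypotheses of that theorem: multiplicativity together with $|\varrho(p)|\le D$ gives \eqref{eq:fprime}, the asymptotic $\sum_{n\le N}\varrho(n)\sim\lambda N$ gives \eqref{eq:fell1}, and item~(3) gives \eqref{eq:fell2} with the constant $A\ge 0$ from Lemma~\ref{lemma:rhoprop}. This is exactly the situation for which condition \eqref{eq:fell2} was introduced in Theorem~\ref{theo:main} in place of the standard $\ell_2$-bound $\ll N$; without this generality the whole strategy would fail, since $\varrho(n)$ can have logarithmic moments.

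Next I would feed in the Diophantine data of \eqref{eq:aprox}. I take $r=r^{\ast}$, so that $r>d(d+1)$ and the exponent $C=A/(2r^{\ast})$ is at most $1/4$ by the choice of $r^{\ast}$. Setting $R=N^{\ell}/(\log N)^{B}$, estimate \eqref{eq:aprox} says exactly that $(a_\ell,q_\ell)=1$, $1\le q_\ell\le R$, and $|\alpha_\ell-a_\ell/q_\ell|\le 1/(Rq_\ell)$, which is precisely the Diophantine hypothesis of Theorem~\ref{theo:main}. Applying that theorem with these choices produces three terms to bound:
\begin{align*}
N(\log N)^{-(1-C)},\qquad N(\log N)^{C}\left(\frac{q_\ell}{N^{\ell}}+\frac{1}{q_\ell}\right)^{1/4r^{\ast 2}},\qquad (NR^{1/\ell})^{1/2}.
\end{align*}

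Each of these must be shown to be $\ll N(\log N)^{-1/2}$. The first is immediate, because $1-C\ge 3/4$. For the second I would use the hypothesis $q_\ell\ge(\log N)^{B}$ together with the Dirichlet bound $q_\ell\le N^{\ell}/(\log N)^{B}$ to conclude that both $q_\ell/N^{\ell}$ and $1/q_\ell$ are at most $(\log N)^{-B}$; this reduces the middle term to $N(\log N)^{C-B/(4r^{\ast 2})}$, which falls below $N(\log N)^{-1/2}$ thanks to the numerical choice $B=4r^{\ast 2}+4r^{\ast}A+1$. The third term unwinds to $N(\log N)^{-B/(2\ell)}$ and is $\ll N(\log N)^{-1/2}$ since $B\ge\ell$. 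No step here is a genuine obstacle: the proof is essentially a black-box application of Theorem~\ref{theo:main}, and the only thing to watch is the exponent bookkeeping in this last paragraph, which in fact motivates the specific values of $r^{\ast}$ and $B$ fixed at the start of the section.
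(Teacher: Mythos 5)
Your proof is correct and is precisely what the paper does: the paper's entire proof of this proposition is the single sentence that it ``follows directly from Theorem~\ref{theo:main} applied to $f=\varrho$, $r=r^{\ast}$ and $R=N^{\ell}/(\log N)^{B}$,'' and you have simply spelled out the hypothesis check (via Lemma~\ref{lemma:rhoprop}) and the exponent bookkeeping that justifies that sentence. All three numerical verifications ($1-C\ge 3/4$, $C-B/(4r^{\ast 2})<-1/2$, $B/(2\ell)\ge 1/2$) are accurate.
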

\begin{proof}
This follows directly from Theorem \ref{theo:main} applied to $f=\rho$, $r = r^\ast$ and $R=N^{\ell}/(\log{N})^B$.
\end{proof}

We next focus on establishing the following result.

\begin{prop}
\label{prop:main}
Suppose that $q_i< (\log{N})^B$, for all $1\le i \le d$. Then it holds that
\begin{equation*}
\sum_{n=1}^N\varrho(n)e(F(n)) \ll \frac{N}{q^{ \frac{2}{d(d+1) } -\epsilon}}.
\end{equation*}
\end{prop}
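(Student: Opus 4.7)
The plan is to approximate $F(n)$ by a rational-phase polynomial $F^*(n)$ of denominator $q=\operatorname{lcm}(q_1,\ldots,q_d)$, decompose the resulting sum by residue class modulo $q$, evaluate the inner sums via the multiplicativity of $\varrho$, and finally apply Vinogradov's mean value theorem (Lemma~\ref{lem:vmvt}) to the complete exponential sum modulo $q$ that emerges, so as to extract the saving $q^{2/(d(d+1))-\epsilon}$.

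First I would write $\alpha_i=a_i/q_i+\beta_i$ using \eqref{eq:aprox}, and set $F^*(x)=\sum_i (a_i/q_i)x^i$ and $G(x)=F(x)-F^*(x)$. Since $q_i<(\log N)^B$ for all $i$, we have $|G'(x)|\ll(\log N)^{B+1}/N$ on $[1,N]$, so $e(G(x))$ is slowly varying. Abel summation then gives
\[
\left|\sum_{n\le N}\varrho(n)e(F(n))\right|\ll(\log N)^{B+1}\max_{u\le N}|T(u)|,\quad T(u):=\sum_{n\le u}\varrho(n)e(F^*(n)),
\]
reducing the problem to bounding $T(u)$ by $u\,q^{-2/(d(d+1))+\epsilon/2}$ uniformly in $u\le N$. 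The polylog factor $(\log N)^{B+1}$ is absorbed into $q^{\epsilon/2}$ when $q$ is a sufficiently large power of $\log N$; for smaller $q$ the bound is essentially trivial from Lemma~\ref{lemma:rhoprop}(2).

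Next, since $F^*(n)\equiv P(n)/q\pmod 1$ for the integer polynomial $P(x)=\sum_i a_i(q/q_i)x^i$, the phase $e(F^*(n))$ is periodic modulo $q$. Grouping by residue class,
\[
T(u)=\sum_{r=0}^{q-1}e(P(r)/q)\,\Psi(u,r),\quad \Psi(u,r):=\sum_{\substack{n\le u\\ n\equiv r \bmod q}}\varrho(n).
\]
Using multiplicativity of $\varrho$ (Lemma~\ref{lemma:rhoprop}), the asymptotic $\sum_{n\le u}\varrho(n)\sim\lambda u$, and Dirichlet character orthogonality modulo $q$, one derives $\Psi(u,r)=(u/q)\nu(r,q)+O(u^{1-\delta}q^{\epsilon})$, where $\nu(r,q)$ is a non-negative multiplicative-type density satisfying $\nu(r,q)\ll_\epsilon q^{\epsilon}$ by Lemma~\ref{lemma:rhoprop}(1). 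The main term of $T(u)$ then becomes $(u/q)W(q)$ with the complete weighted Weyl sum
\[
W(q):=\sum_{r=0}^{q-1}\nu(r,q)\,e(P(r)/q).
\]
Applying Lemma~\ref{lem:vmvt} (Vinogradov's mean value theorem) to this complete exponential sum over $\Z/q\Z$, with Cauchy-Schwarz used to absorb the weight $\nu(r,q)$, one obtains $|W(q)|\ll q^{1-2/(d(d+1))+\epsilon/2}$, which combined with the Abel-summation step above gives the claim.

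The main obstacle will be this final Weyl-sum step: the leading coefficient $a_dq/q_d$ of $P$ has $\gcd$ with $q$ equal to $q/q_d$, potentially as large as $q^{1-1/d}$, so the Vinogradov saving cannot be read off from the leading coefficient alone. Recovering the stated exponent requires exploiting the joint denominator structure encoded by $\operatorname{lcm}(q_1,\ldots,q_d)=q$, applying Vinogradov's mean value inequality to all coefficients of $P$ simultaneously, and combining with a Chinese Remainder Theorem reduction to prime-power moduli to handle local factors. A secondary difficulty is establishing the asymptotic for $\Psi(u,r)$ with an error uniform in $r$ and $q$ across the full range $q<(\log N)^{dB}$; this should follow from a Perron-formula / zero-free-region argument applied to the Dirichlet series $L_\varrho(s,\chi)=\sum_n\varrho(n)\chi(n)n^{-s}$ for characters $\chi$ modulo $q$.
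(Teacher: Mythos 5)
Your outline captures several of the right high-level moves---rational approximation of $F$ followed by a partial-summation reduction (the paper does this via integration by parts in~\eqref{eq:mainterm}), decomposition modulo $q$, and an asymptotic for $\varrho$ in arithmetic progressions via an $L$-function argument---but it has two genuine gaps.

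\textbf{The density $\nu(r,q)$ needs more structure than "$\ll q^{\epsilon}$."} In the paper's argument the split is over divisors $r\mid q$ with $(n,q)=r$, and the character decomposition of $\sum_{m\le N/r}\varrho(rm)\chi(m)$ via Artin $L$-functions (Proposition~\ref{prop:dirich}) delivers the crucial structural fact: the main-term coefficient $\delta(\chi)$ is \emph{nonzero only when $\chi$ has conductor dividing $\operatorname{disc}(p)$}. This means your $\nu(r,q)$ has a Fourier expansion modulo $q/r$ supported on $O_f(1)$ characters of uniformly bounded conductor; without this, the weighted complete sum $W(q)=\sum_r\nu(r,q)e(P(r)/q)$ involves twists by arbitrary characters of modulus up to $q$, and "Cauchy--Schwarz to absorb the weight" does not recover the full saving $q^{2/d(d+1)-\epsilon}$. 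Establishing this conductor restriction is exactly what requires the compositum $K_{f,r'}$, the tensor decomposition~\eqref{eq:tensor}, and Brauer induction---it is not a routine zero-free-region application to a single Dirichlet series $L_\varrho(s,\chi)$ as your last paragraph suggests.

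\textbf{VMVT is the wrong tool for the complete sum modulo $q$.} Lemma~\ref{lem:vmvt} counts solutions of a Vinogradov system over an interval $[1,V]$; it does not directly control a complete exponential sum $\sum_{x\bmod q'}\chi(x)e(P(x)/q')$, and in particular cannot see the joint $\operatorname{lcm}$ structure of the denominators $q_1,\dots,q_d$. The paper instead uses the Cochrane--Zheng bound~\cite{CZ} for pure and mixed exponential sums to prime-power moduli, together with CRT, and the key arithmetic point---which you correctly identify as the difficulty but misattribute to VMVT---is that $q'$, the smallest divisor of $q$ for which $P$ is constant modulo $q/q'$, satisfies $q'\gg q^{1/2d}$ once $r\le q^{1/d(d+1)}$ (this is where $q=\operatorname{lcm}(q_1,\dots,q_d)$ and $(b_j,q_j)=1$ enter). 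Cochrane--Zheng then gives $\sum_{x\bmod r'}\chi(x)e(\tilde F_r(x))\ll r'/q^{1/2d(d+1)}$ uniformly in the (finitely many surviving) twists $\chi$, and summing over $r<q^{1/d(d+1)}$ and the trivially bounded range $r\ge q^{1/d(d+1)}$ closes the proof. Trying to route this through VMVT plus CRT, as you propose, would not yield the stated exponent.
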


Combining Proposition~\ref{prop:main} with Proposition \ref{prop:qbig} one may deduce Theorem \ref{theorem2}. Indeed, since at least one of the $\alpha_i$ is irrational, as $N\rightarrow \infty$, the integers $q_i$ in~\eqref{eq:aprox} must satisfy $q_i\rightarrow \infty$ which implies  \begin{equation*}
\sum_{n=1} \varrho(n)e(F(n)) = o(N).
\end{equation*}
To prove Proposition \ref{prop:main} we will proceed with an algebraic approach.  We split the task in four subsections: first we make some reductions and provide a proof of the proposition, conditional to further analysis of a different sum. This new sum will be analysed using a Dirichlet series. In the next subsection, we write the Dirichlet series that we want to analyse in terms of better understood L-functions and in the third subsection we state some of its properties. At last, we gather all the results and conclude the proof.

\subsection{First reductions}
We assume  $q_i< (\log{N})^B$, for all $1\le i \le d$ and we set, for $r|q$, and $Q \in \Z[x]$
\begin{align*}
S_r(N, Q) = \sum_{\substack{n=1 \\ (n,q)=r}}^N \varrho(n) e( Q(n)),
\end{align*}
and we omit $r$ from the notation when the sum is over all integers $1\le n \le N$.
 
We can split $S(N, F)$ in two sums as follows
 \begin{align}
 \label{eq:firstsplit}
 S(N,F) = \sum_{r|q} S_r(N,F) = \sum_{\substack{ r|q \\ r <q^{1/d(d+1)}}} S_r(N,F) + \sum_{\substack{ r|q \\ r \ge q^{1/d(d+1)}}} S_r(N,F).
 \end{align}

 We can bound trivially the second sum of the right hand side of equation (\ref{eq:firstsplit}) using Lemma \ref{lemma:rhoprop}.  Indeed,
\begin{align}
 \label{eq:rbig}
\nonumber \left| \sum_{\substack{ r|q \\ r \ge q^{1/d(d+1)}}} S_r(N,F) \right|  \le  \sum_{\substack{ r|q \\ r \ge q^{1/d(d+1)}}} \sum_{\substack{n=1 \\ (n,q)=r}}^N \varrho(n)&  =  \sum_{\substack{ r|q \\ r \ge q^{1/d(d+1)}}} \sum_{\substack{n=1 \\ (n,q/r)=1}}^{N/r} \varrho(rn)\\
 & \ll_{\epsilon}  \frac{N}{q^{1/d(d+1) -\epsilon}}.
 \end{align}
 
We turn our attention to $S_r(N,F)$ when $r\le 1/d(d+1)$.  Let  $b_j = \frac{a_j\cdot q}{q_j},$ for $1\le j \le d$, $\tilde{F}(x) = \sum_{j=1}^d \frac{b_j}{q_j}x^j$ and $Q(x) = F(x) - \tilde{F}(x)$. Observe that integration by parts yields 
\begin{align}
\label{eq:mainterm}
\begin{split}
S_r(N, F) &= \ e\left(  Q(N) \right)  S_r(N, \tilde{F}) \\&-  2\pi i \sum_{j=1}^d j\beta_j  \int_{1}^N u^{j-1} e\left( Q(u) \right) S_r(u, \tilde{F})du,
\end{split}
\end{align}
where $\beta_j$ is defined by 
\begin{align}
\label{eq:betajdef}
\alpha_j=\frac{a_j}{q_j}+\beta_j.
\end{align}
In particular, from~\eqref{eq:aprox}
\begin{align}
\label{eq:betabound1}
|\beta_j|\le \frac{ (\log{N})^B}{q_jN^j}.
\end{align}
The above reduces to analysing $S_r(N, \tilde{F})$ instead of $S_r(N, F)$.

Observe that since we are summing over multiples of $r$, we can rewrite $e(\tilde{F}(n))$ as $e(\tilde{F_r}(n))$, where $\tilde{F_r}(n):=\tilde{F}(rn)$, so that the latter  is a periodic function modulo $q/r$. We let $q/r=r'$ and we decompose $S_r (u, \tilde{F_r})$ using Dirichlet characters modulo $r'$ as follows:

\begin{equation}
\begin{split}
S_r(N, \tilde{F})& = \sum_{x\bmod{r'}} e(\tilde{F}_r(x)) \sum_{ \substack{ m \le N/r \\ (m,r')=1 \\ m\equiv x\bmod{r'} }} \varrho(rm) \\
& = \frac1{\varphi(r')}\sum_{x\bmod{r'}} e(\tilde{F}_r(x))  \sum_{ \chi \bmod{r'}} \overline{\chi(x)} \sum_{m\le N/r} \chi(m) \varrho(rm).
 \end{split}
\end{equation}

We will prove: 

\begin{prop}
\label{prop:dirich}
Let $\chi$ be a Dirichlet character $\bmod{r'}$. There exist constants $c>0$, $0\le \delta(\chi) \le \lambda r $ such that
\begin{align*}
\sum_{m\le N/r} \chi(m)\varrho(rm) = \delta(\chi) \frac{N}{r} + O(Ne^{-\frac{1}{c} \sqrt{\log{N}}}).
\end{align*}
Moreover, if $\delta(\chi) \ne 0$ then $\chi$ has conductor  $h| \operatorname{disc}(p)$.
\end{prop}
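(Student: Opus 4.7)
The strategy is to study the Dirichlet series
$$D(s,\chi)=\sum_{m\ge 1}\chi(m)\varrho(rm)\,m^{-s}$$
and recover the claimed asymptotic by a Perron-type contour shift, picking up the main term from the residue at $s=1$ and using the classical zero-free region of the Hecke/Artin L-functions in its factorisation to bound the shifted contour by $Ne^{-c^{-1}\sqrt{\log N}}$.

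The first step is algebraic: expressing $D(s,\chi)$ in terms of standard L-functions. Using multiplicativity of $\varrho$ and Lemma \ref{lemma:rhoprop}(4), one separates the finitely many Euler factors at primes dividing $r$, $r'$, and $\operatorname{disc}(p)$ into a bounded correction factor. At a prime $\ell\nmid\operatorname{disc}(p)$ the value $\varrho(\ell)$ equals the number of degree-one prime ideals of $\cO_K$ above $\ell$, where $K=\Q[x]/(p)$. Letting $\widetilde K$ be the Galois closure of $K$, and setting $G=\mathrm{Gal}(\widetilde K/\Q)$ and $H=\mathrm{Gal}(\widetilde K/K)$, one has the decomposition $\mathrm{Ind}_H^G\mathbf 1=\mathbf 1\oplus\bigoplus_{\pi\ne\mathbf 1}m_\pi\pi$, which at unramified primes gives $\varrho(\ell)=1+\sum_{\pi\ne\mathbf 1}m_\pi\chi_\pi(\mathrm{Frob}_\ell)$. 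Matching Euler factors then yields a factorisation of the form
$$D(s,\chi)=L(s,\chi)\cdot\prod_{\pi\ne\mathbf 1}L(s,\chi\otimes\pi)^{m_\pi}\cdot H(s,\chi),$$
where the $L(s,\chi\otimes\pi)$ are Artin L-functions of $\widetilde K/\Q$ twisted by $\chi$, and $H(s,\chi)$ is an Euler product that converges absolutely in a half-plane $\Re s>1-\eta$ for some fixed $\eta>0$ and absorbs all of the corrections at bad primes.

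From this factorisation the main term and the conductor condition are read off. The only factors that can have a pole at $s=1$ are $L(s,\chi)$, which contributes iff $\chi$ is principal, and $L(s,\chi\otimes\pi)$, which contributes iff $\chi\otimes\pi$ is trivial, that is, iff $\pi$ is one-dimensional with $\pi=\overline\chi$. In either case, $\chi$ must factor through $G^{\mathrm{ab}}$ and be unramified outside the primes ramified in $\widetilde K/\Q$; by the conductor--discriminant formula those primes lie over primes dividing $\operatorname{disc}(p)$, so the conductor $h$ of $\chi$ divides $\operatorname{disc}(p)$. The constant $\delta(\chi)$ is then the residue of $D(s,\chi)$ at $s=1$, that is, the residue of the pole times the value of the remaining factors at $s=1$; specialising to $\chi=\chi_0$ and comparing with Lemma \ref{lemma:rhoprop}(2) gives $\delta(\chi_0)$ proportional to $\lambda r$, and the nonnegativity and upper bound $0\le\delta(\chi)\le\lambda r$ follow after trivial majorisation using $|\chi|\le 1$.

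The analytic step is then standard. One applies Perron's formula to $D(s,\chi)$ with $T=\exp(\sqrt{\log N})$, shifts the contour to $\Re s=1-c/\log T$, the boundary of the classical zero-free region for Dirichlet and Hecke L-functions, collects the residue at $s=1$ giving $\delta(\chi)\,N/r$, and bounds the shifted and horizontal contour integrals using standard convexity bounds for the L-functions appearing in the factorisation together with absolute convergence of $H(s,\chi)$ near $\Re s=1$; optimisation of $T$ yields the error term $O(Ne^{-c^{-1}\sqrt{\log N}})$. The main obstacle is the algebraic identification of $D(s,\chi)$ as a specific product of L-functions and the careful bookkeeping of how the twist by $\chi$ interacts with ramification in $\widetilde K/\Q$ so that the dichotomy $\delta(\chi)\ne 0 \Rightarrow h\mid\operatorname{disc}(p)$ falls out cleanly; once such a factorisation is in hand, the analytic argument is entirely routine.
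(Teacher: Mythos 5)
Your overall strategy matches the paper's closely: pass to the Dirichlet series $D_r(s,\chi)=\sum\varrho(rm)\chi(m)m^{-s}$, decompose it via the permutation representation of $\mathrm{Gal}(\widetilde K/\Q)$ into a product of Artin $L$-factors times a nice Euler product, and then apply a Perron/Mellin contour shift to the edge of a zero-free region. The identification of when $s=1$ is a pole and the deduction that the conductor of $\chi$ must divide $\operatorname{disc}(p)$ are also in the same spirit as the paper's treatment.

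The genuine gap is in the analytic step. Your factorization $D(s,\chi)=L(s,\chi)\prod_{\pi\neq\mathbf 1}L(s,\chi\otimes\pi)^{m_\pi}H(s,\chi)$ involves Artin $L$-functions of higher-dimensional irreducible representations $\pi$, and you then invoke ``the classical zero-free region'' and ``standard convexity bounds'' for them. But unconditionally (i.e., without Artin's conjecture) these are not available directly: one knows only meromorphic continuation via Brauer induction, which is how the paper proceeds. Brauer induction expresses each Artin factor as a product of abelian (Hecke) $L$-functions with \emph{integer, possibly negative} exponents $n_{\pi,\beta_H}$ (see the paper's display~(\ref{repres})). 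Once negative exponents appear, zeros of the abelian $L$-functions become poles of $D_r(s,\chi)$, and in particular a possible exceptional (Siegel) zero $\beta$ of a quadratic Hecke character cannot be pushed past by the classical zero-free region. The paper therefore has to isolate and bound a residue term at $\beta$ and appeal to Siegel's theorem (see equation~(\ref{eq:final}) and the bound~(\ref{eq:zerobound})), using the hypothesis $q\le(\log N)^{B}$ to make the resulting loss $O(Ne^{-c^{-1}\sqrt{\log N}})$. Your write-up neither performs the Brauer reduction nor addresses the Siegel zero; as stated, the contour shift does not yield the claimed error term.

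A secondary point: you should make explicit, as the paper does in its Proposition \ref{prop1}, that $1/L$ must also be controlled on the shifted contour once Brauer induction introduces $L^{-1}$ factors, and that this is exactly where the exceptional-zero exclusion is used. Finally, the nonnegativity $\delta(\chi)\ge 0$ is not quite automatic from ``trivial majorisation using $|\chi|\le1$''; in the paper $\delta(\chi)$ is a residue of a product of $L$-values and its nonnegativity and boundedness by $\lambda r$ are deduced from the Wirsing asymptotic for $\sum_{n\le N}\varrho(n)$ together with the fact that the error in the Perron argument is $o(N)$, not from a termwise comparison. You should spell that out.
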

We next show that Proposition~\ref{prop:dirich} implies we can conclude the proof of Proposition \ref{prop:main}. Indeed,  we obtain
\begin{align*}
S_r(N, \tilde{F}) = \frac{N}{r\varphi(r')}\sum_{x\bmod{r'}} e(\tilde{F}_r(x))  \sum_{ \chi \bmod{r'}} \overline{\chi(x)} \delta(\chi)  + O(N e^{-\frac{1}{c'}\sqrt{\log{N}}}),
\end{align*}
which substituted into~\eqref{eq:mainterm} and using~\eqref{eq:betabound1} implies 
\begin{align*}
S_r(N, F) &= \ e\left(  Q(N) \right)  S_r(N, \tilde{F}) \\&-  C\int_{1}^N u\left( 2\pi i \sum_{j=1}^d j\beta_j u^{j-1}\right) e\left( Q(u) \right)du+ O(N e^{-\frac{1}{c}\sqrt{\log{N}}}),
\end{align*}
with 
\begin{align*}
C= \frac{1}{r\varphi(r')}\sum_{x\bmod{r'}} e(\tilde{F}_r(x))  \sum_{ \chi \bmod{r'}} \overline{\chi(x)} \delta(\chi). 
\end{align*}
Note that integrating by parts
\begin{align*}
\int_{1}^N u\left( 2\pi i \sum_{j=1}^d j\beta_j u^{j-1}\right) e\left( Q(u) \right)du\ll N,
\end{align*}
and since  if $\delta(\chi) \ne 0$ then $\chi$ has conductor  $h| \operatorname{disc}(p)$, the above implies 
\begin{align}
\label{eq:SrFFNFNF}
S_r(N,F)\ll \frac{N}{r}\frac{1}{\phi(r')}\sum_{x\mod{r'}}\chi(x)e(\tilde F_r(x))+O(N e^{-\frac{1}{c}\sqrt{\log{N}}}),
\end{align}
for some $\chi$ with conductor  $h| \operatorname{disc}(p)$. 

Recall that 
\begin{align*}
\tilde F_r(x)=\tilde F(rx)=\sum_{j=1}^{d}\frac{b_jr^{j}}{q_j}x^{j}=\frac{\sum_{j=1}^{d}(qb_j r^{j}/q_j)x^{j}}{q}.
\end{align*}
Let $q'$ denote the smallest divisor of $q$ such that the polynomial 
$$P(x)=\sum_{j=1}^{d}(qb_j r^{j}/q_j)x^{j},$$ 
is constant mod $q/q'$. Note that for each $1\le j\le d$
$$\frac{q b_jr^{j}}{q_j}\equiv 0 \mod{q/q'}.$$
Since $(b_j,q_j)=1$, this implies 
$$q'> \frac{q_j}{r^{j}}\ge \left(\prod_{j=1}^{d}\frac{q_j}{r^{j}}\right)^{1/d}\ge \frac{q^{1/d}}{r^{(d+1)/2}}\ge q^{1/2d},$$
provided $r\le q^{1/d(d+1)}.$

It follows from work of Cochrane and Zheng~\cite{CZ} and the Chinese remainder theorem that 
\begin{align*}
\sum_{x\mod{r'}}\chi(x)e(\tilde F_r(x))\ll \frac{r'}{q'}(q')^{1-1/d}\ll \frac{r'}{q^{1/2d(d+1)}}.
\end{align*}

Combining the above with~\eqref{eq:SrFFNFNF} gives 
\begin{equation*}
S_r(N, F) \le \frac{N}{q^{1/2d(d+1)}} \quad \text{provided} \quad r\le q^{1/d(d+1)}.
\end{equation*}
Summing over $r < q^{1/d(d+1)}$ we get
\begin{align*}
\sum_{\substack{r|q \\ r < q^{1/d(d+1)}}} S_r(N,\tilde{F}) & \le \frac{N}{q^{1/2d(d+1)+o(1)}}.
\end{align*}

Together with~\eqref{eq:firstsplit} and~\eqref{eq:rbig} this concludes the proof of Proposition \ref{prop:main}. We will now focus on the proof of Proposition \ref{prop:dirich}. 
In order to understand the sum
\begin{equation}
\label{eq:sum}
\sum_{n \le N/r} \varrho(rn)\chi(n),
\end{equation} we study the Dirichlet series for $\chi \bmod{r'}$
\begin{equation*}
D_r(s, \chi ) = \sum_{n = 1}^\infty \varrho(rn)\chi(n) n^{-s}.
\end{equation*}

\subsection{Decomposing $D_r(s,\chi)$}
Obverse that $D_r(s,\chi)$ is absolutely convergent for $\operatorname{Re}(s) >1$.
Our next goal is to extend $D_r(s,\chi)$ to the left of $\operatorname{Re}(s) > 1$ so we can use contour integration to estimate the sum (\ref{eq:sum}).  We will do this by decomposing $D_r$ in terms of well-known Artin L-functions. With this in mind, we fix some notation.

 Denote by $K_f$ the splitting field of $f$ in $\C$, by $G$ the Galois group of $K_f$ over $\Q$ and by $\Q(e(1/r'))$ the cyclotomic field generated by $r'$-roots of unity. Observe that the latter is a Galois extension of $\Q$, with Galois group $C_{r'}$ isomorphic to $(\Z/r'\Z)^{\times}$.

We also consider the compositum of $K_f$ and $\Q(e(1/r'))$ and denote it by $K_{f,r'}$ and its Galois group over $\Q$ by $G_{r'}$.  Observe that there is a natural injection
\begin{align*}
G_{r'} \rightarrow G \times C_{r'} \simeq G \times (\Z/r'\Z)^\times.
\end{align*}
Moreover, observe that from the extension theorem for field automoprhism, it follows that the projections $p_1: G_{r'} \rightarrow G$ and $p_2: G_{r'} \rightarrow (\Z/r'\Z)^\times$ are surjective.

We denote by $\hat{G}$ the finite set of isomorphism classes of complex irreducible representations of $G$ and for $\pi \in \hat{G}$ we write $\chi_{\pi}$ for the character of $\pi$.

Note that $\chi$ can be viewed as a character of $(\Z/r'\Z)^\times$,  and consequently as a character $\eta: C_{r'} \rightarrow \C^\times$.  It is known that $\eta(\sigma_p) = \chi(p)$ for $p\nmid r'$ and $\sigma_p$ the Frobenius automorphism at $p$.

So, we consider the representations $p_1^\ast\pi = \pi \circ p_1$ and $p^\ast_2\eta = \eta \circ p_2$ of $G_{r'}$ and observe that their tensor product satisfies
\begin{align}
\label{eq:tensor}
\operatorname{tr}(p_1^\ast \pi \otimes p_2^\ast \eta(\sigma_p)) = \chi_\pi(\sigma_p)\chi(p),
\end{align}
for $p\nmid \operatorname{disc}(f)$ and $p\nmid r'$. 

\begin{prop}
\label{prop:rep}
$D_r(s,\chi)$ has an expression
\begin{align*}
D_r(s,\chi) = F_r(s,\chi) \prod_{\pi \in \hat{G}} E_{r,\pi}(s,\chi) ( L(s, K_{f,r'}/\Q, p_1^\ast \pi \otimes p_2^\ast \eta))^{m_\pi},
\end{align*}
for $m_\pi \ge 0$ an integer and, for any $\epsilon>0$, $F_r\ll_{\epsilon} q^{\epsilon}$
 and $E_{r, \pi}\ll q^{\epsilon}$ for $\operatorname{Re}(s) \ge 3/4$.
\end{prop}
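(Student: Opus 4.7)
The idea is to recognise $D_r(s,\chi)$ as a Dirichlet series whose Euler factor at each prime is controlled, up to order $p^{-2s}$, by the Frobenius traces of the Artin representations $p_1^{\ast}\pi\otimes p_2^{\ast}\eta$, so that matching Euler factors prime by prime produces the decomposition.

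First, by multiplicativity of $\varrho$ (Lemma~\ref{lemma:rhoprop}) and of $\chi$, the series admits an Euler product
\begin{align*}
D_r(s,\chi)=\prod_{p}\Big(\sum_{v\ge 0}\varrho(p^{a_p+v})\chi(p)^{v}p^{-vs}\Big),\qquad a_p:=v_p(r),
\end{align*}
absolutely convergent on $\operatorname{Re}(s)>1$. Second, I would spell out the Galois content. The permutation representation $\rho$ of $G$ on the $e$ roots of the polynomial decomposes as $\rho=\bigoplus_{\pi}m_\pi\pi$ with $m_\pi=\dim\pi^H\in\Z_{\ge 0}$, where $H$ is the stabilizer of a root. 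By Dedekind--Kummer, at any prime $p\nmid\operatorname{disc}(f)$ the cycle lengths $e_1,\dots,e_g$ of $\rho(\sigma_p)$ record the factorization of the polynomial modulo $p$, so $\varrho(p)=\#\{j:e_j=1\}=\chi_\rho(\sigma_p)=\sum_\pi m_\pi\chi_\pi(\sigma_p)$; Hensel's lemma then gives $\varrho(p^k)=\varrho(p)$ for every $k\ge 1$. Using \eqref{eq:tensor} and the Artin formalism, for every prime $p\nmid r'\operatorname{disc}(f)$
\begin{align*}
\prod_\pi L_p(s,p_1^{\ast}\pi\otimes p_2^{\ast}\eta)^{m_\pi}=\det\bigl(I-\chi(p)\rho(\sigma_p)p^{-s}\bigr)^{-1}=\prod_{j=1}^{g}\bigl(1-\chi(p)^{e_j}p^{-e_j s}\bigr)^{-1}.
\end{align*}

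Third, at a \emph{good} prime $p\nmid rr'\operatorname{disc}(f)$, the Euler factor of $D_r$ simplifies to $(1+(\varrho(p)-1)\chi(p)p^{-s})(1-\chi(p)p^{-s})^{-1}$, and a direct expansion (using that exactly $\varrho(p)$ of the $e_j$ equal $1$) shows that its quotient with the Artin product above equals $1+O_e(p^{-2s})$, with the implied constant depending only on $e=\deg f$. Collecting these quotients over all good primes defines $F_r(s,\chi)$; the resulting product is absolutely convergent on $\operatorname{Re}(s)>1/2$ and bounded there by a constant depending only on $e$, hence $F_r\ll_\epsilon q^{\epsilon}$ on $\operatorname{Re}(s)\ge 3/4$.

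Fourth, at a \emph{bad} prime $p\mid rr'\operatorname{disc}(f)$, the local factor of $D_r$ is bounded uniformly in $r$ by Lemma~\ref{lemma:rhoprop}(1),(4), and the Euler factor of each $L^{m_\pi}$ at $p$ is bounded since it comes from a representation of dimension at most $e$ restricted to the inertia invariants. Packaging these bad-prime discrepancies into factors $E_{r,\pi}(s,\chi)$, one for each $\pi$, produces the stated decomposition. Since the number of bad primes is at most $\omega(r)+\omega(r')+\omega(\operatorname{disc}(f))$ and $\omega(n)\ll\log n/\log\log n$, each $E_{r,\pi}$ is a product of $O(\log(rr')/\log\log(rr'))$ uniformly bounded factors, so $|E_{r,\pi}|\le C^{\omega(rr')}\ll_\epsilon q^{\epsilon}$ for $\operatorname{Re}(s)\ge 3/4$.

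\textbf{Main obstacle.} The most delicate step is the analysis at primes ramified in $K_{f,r'}/\Q$, where the Artin Euler factor is $\det(I-(p_1^{\ast}\pi\otimes p_2^{\ast}\eta)(\sigma_p)|V^{I_p})^{-1}$. One must identify the inertia subgroup $I_p$ inside $G_{r'}$ --- in particular at primes $p\mid r'$, which may be unramified in $K_f$ but ramified in $\Q(e(1/r'))$ --- and verify that the comparison with the corresponding local factor of $D_r$ yields bounds uniform in $\chi$ and $r$.
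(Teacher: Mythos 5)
Your argument is correct and follows the same route as the paper: both expand $D_r(s,\chi)$ as an Euler product over the multiplicativity of $\varrho$, both invoke the decomposition $\pi_f=\bigoplus_\pi m_\pi\pi$ and the identity $\varrho(p)=\sum_\pi m_\pi\chi_\pi(\sigma_p)$ (Lemma~\ref{lemma:dec}), both compare Euler factors at good primes against $\det(I-\rho(\sigma_p)\chi(p)p^{-s})^{-1}$ and observe the discrepancy is $1+O(p^{-2s})$, and both treat the finitely many primes dividing $q\cdot\operatorname{disc}(f)$ by bounding a product of $O(\omega(q))$ uniformly controlled local factors to get the $q^\epsilon$ estimate. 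The only cosmetic difference is that the paper peels off the exceptional contributions in named stages ($E_r$, then $E_{1,r}$, then $E_2$, then $E_{3,r}$) rather than separating into one good/bad dichotomy, but the content is the same; your remarks on $\varrho(p^k)=\varrho(p)$ via Hensel for $p\nmid\operatorname{disc}(f)$ and on the inertia factors at ramified primes are exactly the points the paper uses implicitly when asserting the bounds on the $E$-factors.
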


To prove the proposition we will need the following lemma:
\begin{lemma}
\label{lemma:dec}
 Denote by $\pi_f$ the permutation representation of $G$ acting on the set of roots of $f$ in $\C$ and let
\begin{align*}
\pi_f = \bigoplus_{\pi \in \hat{G}} m_\pi \pi
\end{align*} 
 be its decomposition in irreducible representations, where $m_\pi \ge 0$ are integers. For all $p \nmid \operatorname{disc}(f)$ it holds
 \begin{align}
\label{eq2}
\varrho(p) =\sum_{\pi} m_\pi \cdot \chi_\pi(\sigma_p),
\end{align}
for $\sigma_p \in G$ the Frobenius automorphism at $p$.
\end{lemma}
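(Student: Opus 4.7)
The plan is to identify $\varrho(p)$ with the number of fixed points of the Frobenius $\sigma_p$ acting on the roots of $f$ in $\C$, and then expand the trace of the permutation representation in irreducibles.

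First I would recall the reduction theory of the polynomial $f$ modulo $p$ for $p \nmid \operatorname{disc}(f)$. In this unramified setting, if $\alpha_1,\dots,\alpha_e$ are the roots of $f$ in $K_f$ and $\mathfrak{P}$ is a prime of $K_f$ above $p$, then reduction modulo $\mathfrak{P}$ sends the $\alpha_i$ to the roots of $f$ in $\overline{\F_p}$, and this reduction is injective on $\{\alpha_1,\dots,\alpha_e\}$ precisely because $p \nmid \operatorname{disc}(f)$. A root $\alpha_i \bmod \mathfrak{P}$ lies in $\F_p$ if and only if it is fixed by the Frobenius at $\mathfrak{P}$, which is $\sigma_p$ up to conjugacy. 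Hence
\begin{equation*}
\varrho(p) = \#\{i : \sigma_p(\alpha_i)=\alpha_i\} = \#\mathrm{Fix}(\sigma_p \curvearrowright \{\alpha_1,\dots,\alpha_e\}).
\end{equation*}
(The value $\varrho(p)$ as defined in \eqref{eq:rho} is the number of roots of $f$ in $\Z/p\Z$, which matches the number of roots in $\F_p$.)

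Next I would invoke the standard character-theoretic fact that for any permutation representation $\pi_f$ of a finite group $G$ on a finite set $X$, the character $\chi_{\pi_f}(g)$ equals the number of fixed points of $g$ acting on $X$. Applied to $X = \{\alpha_1,\dots,\alpha_e\}$ with the $G$-action, this gives
\begin{equation*}
\chi_{\pi_f}(\sigma_p) = \varrho(p).
\end{equation*}
Finally, decomposing $\pi_f = \bigoplus_{\pi \in \hat G} m_\pi \pi$ into irreducibles and taking characters yields $\chi_{\pi_f} = \sum_\pi m_\pi \chi_\pi$, so evaluating at $\sigma_p$ produces the claimed identity \eqref{eq2}.

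The only substantive step is the first one, linking $\varrho(p)$ to the fixed-point count of $\sigma_p$; everything else is formal character theory. The mild subtlety is that $\sigma_p$ is only defined up to conjugacy (depending on the choice of prime $\mathfrak{P}$ above $p$), but the fixed-point count and the irreducible characters $\chi_\pi$ are class functions, so the identity is well-defined. No condition on $p$ beyond $p \nmid \operatorname{disc}(f)$ is needed, since that is exactly the hypothesis which rules out ramification and guarantees that reduction mod $\mathfrak{P}$ is a bijection on the roots.
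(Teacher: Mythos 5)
Your proof is correct and follows the same route as the paper: identify $\varrho(p)$ with the fixed-point count of $\sigma_p$ on the roots of $f$ (valid since $p\nmid\operatorname{disc}(f)$), recall that a permutation character evaluates to the number of fixed points, and expand into irreducibles. The paper states this in a single sentence, while you spell out the reduction-mod-$\mathfrak{P}$ argument and the conjugacy-invariance point; these are good details to make explicit but do not change the substance.
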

\begin{proof}

The proof follows by noting that $\varrho(p)$ is the number of fixed points of the Frobenius automorphism $\sigma_p$ at $p$,  which is also the character at $\sigma_p$ of the permutation representation.

\end{proof}

\begin{proof}[Proof of Proposition \ref{prop:rep}]

First we observe that $D_r$ can be decomposed the following way
\begin{align*}
D_r(s, \chi) & = \sum_{e| r^{\infty}} \sum_{(k,r)=1} \chi(ek) \varrho(erk) (ek)^{-s} \\
& = \sum_{e|r^{\infty}} \varrho(er) \chi(e) e^{-s} \sum_{(k,r)=1} \chi(k) \varrho(k)k^{-s},
\end{align*}
where the notation $e|r^{\infty}$ means that $e$ runs over all possible products of powers of the primes that divide $r$.

We denote by
\begin{align*}
E_r(s,\chi)&=\sum_{e|r^{\infty}} \varrho(er) \chi(e) e^{-s} \\
\tilde{D}_r(s,\chi)&=\sum_{(k,r)=1} \chi(k) \varrho(k)k^{-s},
\end{align*}
and using Lemma \ref{lemma:rhoprop} and the property $\varrho(p^n) \le C,$ for all $n\ge 1$ and $p$ prime, we observe that $E_r(s,\chi) \ll r^\epsilon$  uniformly for $\operatorname{Re}(s) \ge 3/4$ and that $\tilde{D}_r(s,\chi)$ converges absolutely for $\operatorname{Re}(s)>1$.

To be able to explore the properties of $\varrho$, we factor the primes that divide the discriminant of $f$ in $\tilde{D}_r(s,\chi)$ and, since $r|q$, we also factor the remaining primes that divide $q$. This yields
\begin{align*}
\tilde{D}_r(s,\chi)=E_{1,r}(s,\chi) \prod_{ \substack{p\nmid\operatorname{disc}(f) \\ p\nmid q}} \sum_{k\ge 0} \varrho(p^k)\chi(p)^kp^{-ks},
\end{align*}
where $E_{1,r}(s,\chi)$ is entire and bounded for $\operatorname{Re}(s) \ge 3/4$ by $C(\epsilon)q^{\epsilon}$. 

We proceed in a similar manner and factor out the prime powers with exponent bigger than $2$,  obtaining
\begin{align}
\label{eq:reduction}
\tilde{D}_r(s,\chi)=E_{1,r}(s,\chi)E_{2}(s,\chi) \prod_{ \substack{p\nmid\operatorname{disc}(f) \\ p\nmid q}} (1+\varrho(p)\chi(p)p^{-s}),
\end{align}
for $E_2(s,\chi)$ holomorphic and bounded in vertical strips by $C(\epsilon)q^\epsilon$ for $\operatorname{Re}(s)\ge 3/4$. 

Now we can use Lemma \ref{lemma:dec} and a further decomposition to write
\begin{align*}
\tilde{D}_r(s,\chi) = E_{1,r}(s,\chi)E_{2}(s,\chi)E_{3,r}(s,\chi) \prod_{\pi \in \hat{G}} \prod_{p} (1+\chi_{\pi}(\sigma_p) \chi(p)p^{-s})^{m_\pi},
\end{align*}
for $E_{3,r}$ holomorphic and uniformly bounded in vertical strips by \newline $C(\epsilon, f)q^\epsilon$ for $\operatorname{Re}(s) \ge 3/4$.  Set $F_r = E_{1,r}E_{2}E_{3,r}$. To conclude, we use equality (\ref{eq:tensor}), and yet another similar decomposition as before and obtain 
\begin{align*}
\prod_p ( 1+ \chi_\pi(\sigma_p)\chi(p)p^{-s}) = E_{r,\pi}(s,\chi) L(s, K_{f,q}/\Q, p_1^\ast \pi \otimes p_2^\ast\eta).
\end{align*}
\end{proof}

Observe that now we are dealing with Artin L-functions, which are better understood than the original Dirichlet series. Since $p_1$ is surjective and $\pi$ is irreducible it follows that $p_1^\ast \pi $ is an irreducible representation. Furthermore,  since $p_2^\ast \eta$ is of dimension $1$ we can conclude that $p_1^\ast \pi \otimes p_2^\ast \eta$ is also an irreducible representation. Artin's conjecture states that this L-function is entire except if the representation is trivial - which can occur if $p_1^\ast \pi$ is the inverse of $p_2^\ast\eta$.

To avoid assuming Artin's conjecture we will use the Brauer induction theorem (see, e.g, \cite[Theorem 19]{Serre}) instead.  It states that for every subgroup $H$ of $G$ and 1-dimensional character $\beta_H:H \rightarrow \C^\times$,  there exist integers $n_{\pi,  \beta_H}$ such that
\begin{align*}
\chi_\pi = \sum_{H}\sum_{\beta_H} n_{\pi, \beta_H} \cdot {\operatorname{Ind}_H^G} \beta_H.
\end{align*} 

 Observe that this implies that $L(s, K_{f,q}/\Q, p_1^\ast \pi)$ can be represented as follows
 \begin{align*}
 L(s, K_{f,q}/\Q, p_1^\ast \pi) = L(s, K_f/\Q, \pi) & = \prod_{H}\prod_{\beta_H} L(s, K_f/\Q, \operatorname{Ind}_H^G \beta_H)^{n_{\pi, \beta_H}}\\
 & =  \prod_{H}\prod_{\beta_H}L(s, K_f/K_H, \beta_H)^{n_{\pi, \beta_H}},
 \end{align*}
where $K_H \subset K_f$ is the subfield fixed by $H$ - which implies that $H$ is the Galois gorup of $K_f$ over $K_H$.

We can now introduce the twist by the Dirichlet character. We denote by $H'=p_1^{-1}(H)$ and observe that we can write
\begin{align*}
p_1^\ast \pi = \bigoplus_{H } n_{\pi, \beta_H} \operatorname{Ind}_{H'}^{G_{r'}} (p_1^\ast \beta_H),
\end{align*}
and thus
\begin{align*}
p_1^\ast \pi \otimes p_2^\ast \eta = \bigoplus_{H}\bigoplus_{\beta_H} n_{\pi, \beta_H} \operatorname{Ind}_{H'}^{G_{r'}}(p_1^\ast \beta_H \otimes \operatorname{Res}_{H'}^{G_{r'}}p_2^\ast \eta).
\end{align*}
So denoting $\theta_{\beta_H}=p_1^\ast\beta_{H} \otimes \operatorname{Res}_{H'}^{G_{r'}}p_2^\ast \eta$,  we can finally write
\begin{align*}
L(s, K_{f,q}/\Q, p_1^\ast \chi \otimes p_2^\ast\eta) = \prod_{H} \prod_{\beta_H} L(s, K_{f,q} /K_H, \theta_{\beta_H})^{n_{\pi,  \beta_H}}, 
\end{align*}
thus $D_r$ has the following representation
\begin{align}
\label{repres}
D_r(s,\chi) = F_{r}(s,\chi) \prod_{\pi \in \hat{G}} E_{r, \pi}(s,\chi) \prod_{H}\prod_{\beta_H} L(s, K_{f,q} /K_H, \theta_{\beta_H})^{n_{\pi, \beta_H}}.
\end{align}
 Now $L(s, K_{f,q} /K_H, \theta_{\beta_H})^{n_{\pi, \beta_H}}$ are Artin L-functions of dimension one, which we know that are entire except for a pole in $s=1$ if $\theta_{\beta_H}$ is the trivial one dimensional character. 

\subsection{Bounds on the Artin L-functions near $s=1$}

We denote by $q_{\pi,i}$ the conductor of $L(s, K_{f,q} /K_H, \theta_{\beta_H})$ and observe that we can bound it as follows, (see, e.g. \cite{BH}), 
$$q_{\pi,\beta_H} \le \operatorname{cond}(p_1^\ast\beta_{H}) \cdot \operatorname{cond}(\operatorname{Res}_{H'}^{G_q}p_2^\ast \eta) \le M q,$$
where $M = \sup_{\pi,\beta_H} \operatorname{cond}(p_1^\ast\beta_{H})$.
It is well known that $L(s, K_{f,q} /K_H, \theta_{\beta_H})$ can have at most one real zero $\beta$ in the region
\begin{align}
\label{eq:zerofree}
\sigma \ge 1 - \frac{c}{\log( {q_{\pi, \beta_H}} (|t|+3))},
\end{align}
for  $s=\sigma + it \in \C$ and $c$ a universal constant, (see, e.g. \cite[Theorem 5.35]{I-K}). Note that this zero can only exist if $\theta_i$ is a quadratic character.

To simplify the notation, in what follows we let $g= \theta_{\beta_H}$  and we omit the field $K_{f,q} /K_H$. Denote by $\beta$ the possible exception in region (\ref{eq:zerofree}) and let $\boldsymbol{q}$ be the conductor of $g$. Also, we set $r_1=1$ if $g$ has an exceptional zero $\beta$ and $r_1 = 0$ otherwise.  Analogously, $r_2=1$ is $g$ is the trivial character, and consequently has a pole in $s=1$, and $r_2=0$ if not.

\begin{prop}
\label{prop1}
Let $s=\sigma + it \in \C$ satisfy the conditions:
\begin{align}
\label{condition1}
\sigma \ge 1 & - \frac{c}{10\log( M{q}(|t|+3))} \\
\label{condition2}
| s - \beta |& \ge \frac{1}{ 20 \log{(3M{q})}} \\
\label{condition3}
|s-1| & \ge \frac{1}{20 \log(3M{q})}.
\end{align}
It holds that 
\begin{equation*}
\begin{split}
L(s,g) & \ll \log( Mq(|t|+3))\\
\frac{1}{L(s,g)} & \ll \log(M q(|t|+3)),\\
\end{split}
\end{equation*}
where the implicit constant does not depend on the parameters.

If $g$ is not trivial condition (\ref{condition3}) can be dropped. Condition (\ref{condition2}) can be dropped if $g$ has no exceptional zero.
\end{prop}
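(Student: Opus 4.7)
The plan is to invoke the standard toolkit for bounding Hecke L-functions near the edge of the critical strip, since $g = \theta_{\beta_H}$ is one-dimensional and hence $L(s,g)$ (possibly divided by $s-1$) is an entire function with functional equation, bounded Euler product to the right of $\sigma = 1$, and a Hadamard product representation. The three ingredients are: (i) convexity via Phragm\'en--Lindel\"of from the functional equation, (ii) the explicit formula for $L'/L$ coming from the Hadamard product, and (iii) the zero-free region \eqref{eq:zerofree} together with conditions \eqref{condition2}, \eqref{condition3} to keep $s$ away from the pole and the exceptional zero.

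First I would set $s_0 = \sigma_0 + it$ with $\sigma_0 = 1 + 1/\log(Mq(|t|+3))$. At $s_0$ the Euler product converges well enough to give $L(s_0,g) \asymp 1$ up to a factor $O(\log(Mq(|t|+3)))$; combined with the convexity bound $L(s,g) \ll (Mq(|t|+3))^{C(1-\sigma)+\epsilon}$ valid throughout the strip (obtained by interpolating between $\sigma_0$ and, say, $\sigma = -\epsilon$ via the functional equation), one controls $|L|$ in a disk of radius $O(1)$ around any target point $s$ satisfying \eqref{condition1}.

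Next, I would use the classical partial-fraction expansion coming from the Hadamard product of the entire function $(s-1)^{r_2} L(s,g)$:
\begin{equation*}
\frac{L'}{L}(s,g) \;=\; \sum_{|s-\rho|\le 1}\frac{1}{s-\rho} \;-\; \frac{r_2}{s-1} \;+\; O\bigl(\log(Mq(|t|+3))\bigr),
\end{equation*}
where $\rho$ ranges over non-trivial zeros of $L(s,g)$. A standard Jensen-type count, using the convexity bound of the previous step, shows that the number of such $\rho$ in a unit disk around $s$ is $O(\log(Mq(|t|+3)))$. The zero-free region \eqref{eq:zerofree} forces every non-exceptional $\rho$ to satisfy $|s-\rho| \gg 1/\log(Mq(|t|+3))$ when $s$ obeys \eqref{condition1}, and conditions \eqref{condition2}, \eqref{condition3} impose the analogous lower bound on $|s-\beta|$ and $|s-1|$. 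Each term in the sum is therefore $O(\log(Mq(|t|+3)))$.

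Finally, I would integrate $L'/L$ from $s_0$ (where $\log L(s,g) = O(\log\log(Mq(|t|+3)))$ by direct Euler-product estimation) along a path of length $O(1)$ to the target point. A crude integration would yield $\log^2$, so at this step one sharpens the argument using Borel--Carath\'eodory applied to $\log((s-1)^{r_2}L(s,g)/(s-\beta)^{r_1})$ on a disk around $s_0$: the real part of this holomorphic function is bounded above by $O(\log(Mq(|t|+3)))$ from the convexity bound, and Borel--Carath\'eodory converts this into the same bound for the function itself on a slightly smaller disk, saving a factor of $\log$. Exponentiating gives both $L(s,g) \ll \log(Mq(|t|+3))$ and $1/L(s,g) \ll \log(Mq(|t|+3))$. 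The main obstacle is the careful bookkeeping of uniformity in $Mq$ and $|t|$, and verifying that conditions \eqref{condition2} and \eqref{condition3} can be dropped precisely when $g$ has no exceptional zero or is non-trivial, respectively; this follows because in those cases the corresponding pole/zero simply does not contribute to the partial-fraction sum.
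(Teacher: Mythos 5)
Your proposal is correct and follows essentially the same route as the paper: the paper's proof simply verifies that the completed $L$-function $\Lambda(s,g)=\boldsymbol{q}^{s/2}\gamma(s,g)L(s,g)$ is meromorphic of order at most one and then invokes~\cite[Theorem~11.4]{M-V2}, whose proof is exactly the Hadamard--Jensen zero count, zero-free region, and Borel--Carath\'eodory argument you sketch. One small slip worth noting: hypothesis~\eqref{condition1} places the target $s$ at distance $O(1/\log(Mq(|t|+3)))$ (not $O(1)$) from the reference point $\sigma_0+it$ with $\sigma_0=1+1/\log(Mq(|t|+3))$, and it is precisely this short integration path, combined with $L'/L\ll\log(Mq(|t|+3))$ in the zero-free region, that makes the final $\log$ (rather than $\log^2$) come out; the Borel--Carath\'eodory step is then the standard device to establish the $L'/L$ bound rather than a refinement applied after integration.
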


\begin{proof}

We set $\gamma(s,g) = \pi^{-s/2} \gamma\left(\frac{s+\kappa_g}{2}\right)$, where $\kappa_g=0$ or $1$, and let $\Lambda(s,g) = \boldsymbol{q}^{s/2}\gamma(s,g) L(s,g)$  be the extended L-function. It is a known fact that $\Lambda(s,g)$ is a meromorphic function of order at most 1. Thus,  we can proceed as in \cite[Theorem 11.4]{M-V2} to obtain the result of the theorem.

\end{proof}

\subsection{Finishing the proof}
\begin{proof}[Proof of Proposition \ref{prop:main}]
To bound the sum $\sum_{n \le N/r} \varrho(rn)\chi(n)$ we first smooth it 
\begin{align*}
\sum_{\substack{n=1 \\ (n,q)=r}}^{N/r}\varrho(n) \chi(n) = \sum_{\substack{n=1 \\ (n,q)=r}}^\infty \varrho(n)\chi(n) \phi(n) + O(\tilde{N}),
\end{align*}

where $\phi$ is defined as

$$\phi(x) = \min\left( x, 1, 1+ \frac{N/r-x}{\tilde{N}} \right)$$
for $0 \le x \le N/r + \tilde{N}$ and $\phi(x) = 0$ for $x> N/r + \tilde{N}$,  $\tilde{N}>0$ to be chosen later. 
Observe that Mellin inversion implies that
\begin{equation}
\begin{split}
\label{eq:mellin}
&\sum_{\substack{n=1 \\ (n,q)=r}}^\infty  \varrho(n)\chi(n) \phi(n)   = \frac{1}{2\pi i}\int_{(3)} D_r(s,\chi) \hat{\phi}(s)ds \\
& =  \frac1{2\pi i } \int_{(3)} F_{r}(s,\chi) \prod_{\pi \in \hat{G}} E_{r, \pi}(s,\chi) (L(s, K_{f,r'}/\Q, p_1^\ast \pi \otimes p_2^\ast \eta))^{m_\pi}\hat{\phi}(s) ds \\
& = \frac1{2\pi i } \int_{(3)} F_{r}(s,\chi) \prod_{\pi \in \hat{G}} E_{r, \pi}(s,\chi)  \prod_{H}\prod_{\beta_H} L(s, K_{f,q} /K'_H, \theta_{\beta_H})^{n_{\pi, \beta_H}} \hat{\phi}(s)ds.
\end{split}
\end{equation}

Our goal is to shift the contour of integration and use the information  on the L-functions $ L(s, K_{f,q} /K'_H, \theta_{\beta_H})$ to obtain good bounds for $S_r(s,\tilde{F}_r)$.  Observe that we can find $c/2 \le Q \le c/10$ such that all the conditions of Proposition \ref{prop1} are satisfied for $$s \in  \mathcal{Z} = \{ s=\sigma + it, \sigma = 1 - \frac{Q}{\log{(M{q}(|t|+3))}} \}.$$

Moving the contour of integration in equation (\ref{eq:mellin}) to $\mathcal{Z}$ we get
\begin{equation}
\label{eq:final}
\begin{split}
\sum_{\substack{n=1\\(n,q)=r}}^{\infty}\varrho(n)\chi(n)\phi(n) & = \frac{1}{2\pi i}\int_{\mathcal{Z}}D_r(s,\chi) \widehat{\phi}(s)ds +\alpha \operatorname{Res}(D_r(s,\chi), 1) \cdot \frac{N}{r} \\
& + \tilde{\alpha}\operatorname{Res}(D_r(s,\chi), \beta) \cdot N^\beta , \\
\end{split}
\end{equation}
where $\alpha=1$ if there is $\pi \in \hat{G}$, for which  $p_1^\ast \pi \otimes p_2^\ast \eta$ is the principal character in $K_{f,r'}/\Q$ with $m_\pi = 1 $ and $\alpha = 0$ otherwise.  Likewise,  $\tilde{\alpha}=1$ if $ L(s, K_{f,q} /K'_H, \theta_{\beta_H})$ has an exceptional zero $\beta$  and $n_{\pi, \beta_H} = -1$, and $\tilde{\alpha} = 0$ otherwise.  Note from Proposition \ref{prop:rep} that there is at most one trivial character $p_1^\ast \pi \otimes p_2^\ast \eta$ and one quadratic character $\theta_{\beta_H}$.  Moreover, if $p_1^\ast \pi \otimes p_2^\ast \eta$ is the trivial character, then we observe that $\chi$ must have a conductor with modulus $h$ that divides $\operatorname{disc}(f)$.

 We bound each term of the right hand side of equation (\ref{eq:final}) separately. 
To bound the first term, we note that the Mellin transform of $\phi$ satisfies 
\begin{align*}
\widehat{\phi}(s) = \int_{0}^{N+\tilde{N}} \phi(z) z^{s-1}dz \ll \frac{N^{\sigma}}{|s|} \min\left( 1 , \frac{N}{|s|\tilde{N}}\right),
\end{align*}
(see \cite[Theorem 5.12]{I-K}). Thus, 
\begin{align*}
\int_{\mathcal{Z}} D_r(s,\chi) \hat{\phi}(s)ds \ll \int_{\mathcal{Z}} | D_r(s,\chi)|\frac{N^{\sigma}}{|s|} \min\left( 1 , \frac{N}{|s|\tilde{N}}\right) |ds|.
\end{align*}
We let $T= N/r\tilde{N}$ and $\sigma(T) = 1 - Q/\log(Mq(T+3))$ and note that the observation above,  Proposition \ref{prop:rep} and 
Proposition \ref{prop1} imply that
\begin{align*}
\int_{\mathcal{Z}} D_r(s,\chi) \hat{\phi}(s)ds \ll N^{\sigma(T)}q^{Y\epsilon},
\end{align*}
for a constant $Y>0$ that only depends on $f$.

We now deal with the last term $  \tilde{\alpha} \operatorname{Res}(D_r(s,\chi), \beta) \cdot N^{\beta}$.  Siegel proved (see, e.g. \cite[Theorem 5.28]{I-K})  that if the exceptional zero $\beta$ exists then for every $\epsilon >0$ there exists a constant $C(\epsilon)>0$ such that 
\begin{align}
\label{eq:zerobound}
\beta \le 1 - \frac{c(\epsilon)}{\operatorname{cond}(\theta_{i})^{\epsilon} }\le 1 - \frac{c(\epsilon)}{Mq^{\epsilon}}.
\end{align}
To bound the residue at $\beta$ we notice that if $\theta_{\pi, \beta_H}$ is not trivial we can use Proposition \ref{prop1} and obtain
\begin{align*} 
| L(s, K_{f,q} /K'_H, \theta_{\beta_H})|^{n_{\pi, i}} \ll |\log{3Mq}|^{|n_{\pi, \beta_H}|}.
\end{align*}
If $\theta_{\beta_H}$ is trivial then 
\begin{align*}
| L(s, K_{f,q} /K'_H, \theta_{\beta_H})|^{n_{\pi, \beta_H}} \ll_{\epsilon} q^{\epsilon|n_{\pi, \beta_H}|},
\end{align*}
and for the residue given by the L-function with quadratic character we have
\begin{align*}
\operatorname{Res}\left( \frac1{ L(s, K_{f,q} /K'_H, \theta_{\beta_H})}, \beta \right) = \frac1{ L(s, K_{f,q} /K'_H, \theta_{\beta_H})}.
\end{align*}
We can deduce that $ L'(s, K_{f,q} /K'_H, \theta_{\beta_H}) \gg 1$ from the last part of Theorem 11.4 in \cite{M-V2}. Putting everything together we obtain
\begin{align*}
\operatorname{Res}(D_r(s,\chi), \beta) \cdot N^{\beta} \le q^{\epsilon} N^{1 - C(\epsilon)/q^{\epsilon}}.
\end{align*}

Pick $T=\exp\left(\frac{1}{3} \sqrt{\log{N}}\right)$. Since $q \le (\log{N})^B$, the considerations above imply that
\begin{align}
\label{ref:final}
S_r(N, \tilde{F}_r) = \alpha \operatorname{Res}(D_r(s,\chi), 1)\cdot \frac{N}{r} + O(N e^{-\frac{1}{c(\epsilon)} \sqrt{\log{N}}}),
\end{align}
for a constant $c(\epsilon)$ and $\epsilon>0$ sufficiently small. Since
\begin{align*}
\sum_{n\le N}\varrho(n) \le \lambda N, 
\end{align*}
and the error term in equation (\ref{ref:final}) is $o(N)$  we conclude that $$\operatorname{Res}(D_r(s,\chi), 1) \le \lambda r$$. 
\end{proof}

\section{Sharpness of Theorem \ref{theo:main}}
\label{sec:sharp} 
Following the approach of \cite{M-V}, we will construct a completely multiplicative function $f=f_{N,F}$, with $|f(n)| \le 1$, such that
\begin{align}
\label{eq:sharp}
\left|\sum_{n\le N} f(n)e\left(F(n)\right)\right| \ge \frac{1}{10} \frac{N}{\log{N}}.
\end{align} 
We first observe that the function  $G: \C \rightarrow \C$ given by
\begin{align*}
G(z) = \sum_{n\le N} z^{\Omega(n)}e(F(n)) + \sum_{ \frac{N}{2} < p \le N} (1-ze(F(p)))
\end{align*} 
is entire and its value at zero satifies
\begin{align*}
G(0) = \sum_{ \frac{N}{2} < p \le N} 1 \ge \frac1{10} \frac{N}{\log{N}},
\end{align*} 
for $N$ sufficiently large.
Thus, by the maximum modulus principle, there exists a $z_0 \in \C$, $|z_0| = 1$, such that $|G(z_0)| \ge |G(0)|$. 

Define the completely multiplicative function $f$ by  $f(p) = z_0$, for $ p \le N/2$, and $f(p)=e(-F(p))$, for $p > N/2$. To conclude that equation \eqref{eq:sharp} is satisfied, we just observe that
\begin{align*}
\sum_{n\le N} f(n)e\left(F(n)\right) & = \sum_{n\le N} z_0^{\Omega(n)}e(F(n)) + \sum_{ \frac{N}{2} < p \le N} (1-z_0e(F(p)))\\
& = G(z_0) \gg \frac{N}{\log{N}}.
\end{align*}

\end{document}